\tikzset{
  commutative diagrams/.cd,
  arrow style=tikz,
  diagrams={->}
}
\theoremstyle{plain}
\newtheorem{theorem}{Theorem}[section]
\newtheorem*{theorem*}{Theorem}  
\newtheorem{lemma}[theorem]{Lemma}
\newtheorem{proposition}[theorem]{Proposition}
\newtheorem{corollary}[theorem]{Corollary}
\theoremstyle{definition}
\newtheorem{definition}[theorem]{Definition}
\newtheorem{example}[theorem]{Example}
\newtheorem{remark}[theorem]{Remark}
\newcommand{\clA}{\mathcal{A}}
\newcommand{\clC}{\mathcal{C}}
\newcommand{\clD}{\mathcal{D}}
\newcommand{\clO}{\mathcal{O}}
\newcommand{\clU}{\mathcal{U}}
\newcommand{\za}{\alpha}
\newcommand{\zb}{\beta}
\newcommand{\zg}{\gamma}
\newcommand{\zd}{\delta}
\newcommand{\zD}{\Delta}
\newcommand{\zDm}{\zD_{mono}}
\newcommand{\zDopm}{\zD^{op}_{mono}}
\newcommand{\zuDop}{\zuD^{op}}
\newcommand{\zuDopm}{\zuD^{op}_{mono}}
\newcommand{\zve}{\varepsilon}
\newcommand{\hmu}{\hat\mu}
\newcommand{\muk}{\mu_k}
\newcommand{\hmuk}{\hat{\mu}_k}
\newcommand{\zs}{\sigma}
\newcommand{\zst}{\tilde{\zs}}
\newcommand{\pt}{\partial}
\newcommand{\ptt}{\tilde{\pt}}
\newcommand{\uf}{\underline{f}}
\newcommand{\ueta}{\underline{\eta}}
\newcommand{\oeta}{\overline{\eta}}
\newcommand{\uzve}{\underline{\zve}}
\newcommand{\zuD}{\underline{\Delta}}
\newcommand{\Sc}[1]{\scriptstyle{#1}}
\newcommand{\Sz}[1]{\scriptsize{#1}}
\newcommand{\Id}{\operatorname{Id}}
\newcommand{\pr}{\operatorname{pr}}
\newcommand{\bsim}{/\!\!\sim}
\newcommand{\bbsim}{/\!\sim}
\newcommand{\nid}{\noindent}
\newcommand{\ds}{\displaystyle}
\newcommand{\bk}{\bigskip}
\newcommand{\mk}{\medskip}
\newcommand{\ovl}[1]{\overline{#1}}
\newcommand{\up}[1]{^{(#1)}}
\newcommand{\rw}{\rightarrow}
\newcommand{\Rw}{\Rightarrow}
\newcommand{\xRw}{\xRightarrow}
\newcommand{\lw}{\leftarrow}
\newcommand{\Lw}{\Leftarrow}
\newcommand{\xrw}{\xrightarrow} 
\newcommand{\hxrw}[1]{\xymatrix{\ \ar@{^{(}->}^{#1}[r] & \ }}
\newcommand{\tiund}[1]{{\times}_{#1}\:}
\newcommand{\pro}[3]{#1\tiund{#2}\overset{#3}{\cdots}\tiund{#2}#1}
\newcommand{\tenx}[2]{#1\,\tiund{#2}\,#1}
\newcommand{\uset}[2]{\underset{#1}{#2}}
\newcommand{\oset}[2]{\overset{#1}{#2}}
\newcommand{\ob}{obj\,}
\newcommand{\mi}{\text{-}}
\newcommand{\cop}{\textstyle{\,\coprod\,}}
\newlength{\myline}
\newcommandx*{\doublearrow}[4][1=0, 2=1]{
  \draw[black,line width=\myline,double distance=3pt,#3] #4;
}
\newcommandx*{\triplearrow}[4][1=0, 2=3]{
  \draw[black,line width=\myline,double distance=4pt,#3] #4;
  \draw[black,line width=0.7pt] #4;
}
\newcommand{\Dop}{\Delta^{op}}
\newcommand{\Cop}{\clC^{op}}
\newcommand{\Delmono}{\Delta_{mono}^{^{op}}}
\newcommand{\Cat}{\mbox{$\mathsf{Cat}\,$}}
\newcommand{\fair}[1]{\mbox{$\mathsf{Fair^{#1}}$}}
\newcommand{\gfair}[1]{\mbox{$\mathsf{GFair^{#1}}$}}
\newcommand{\fairwg}[1]{\mbox{$\mathsf{Fair_{wg}^{#1}}$}}
\newcommand{\Gpd}{\mbox{$\mathsf{Gpd}$}}
\newcommand{\epid}{\mbox{$\mathsf{Epi}\zD$}}
\newcommand{\cathd}[1]{\mbox{$\mathsf{Cat_{hd}^{#1}}$}}
\newcommand{\catwg}[1]{\mbox{$\mathsf{Cat_{wg}^{#1}}$}}
\newcommand{\gcatwg}[1]{\mbox{$\mathsf{GCat_{wg}^{#1}}$}}
\newcommand{\tawg}[1]{\mbox{$\mathsf{Ta_{wg}^{#1}}$}}
\newcommand{\segps}{\mbox{$\mathsf{SegPs}$}}
\newcommand{\segpsc}[2]{\mbox{$\mathsf{SegPs}$}\funcat{#1}{#2}}
\newcommand{\Ssegpsc}[2]{\mbox{$\mathsf{SSegPs}$}\funcat{#1}{#2}}
\newcommand{\Ps}{\mbox{$\mathsf{Ps}$}}
\newcommand{\psc}[2]{\mbox{$\mathsf{Ps}$}\funcat{#1}{#2}}
\newcommand{\pscmon}[2]{\mbox{$\mathsf{Ps}$}\funcatmon{#1}{#2}}
\newcommand{\PsTalg}{\mbox{\sf Ps-}T\mbox{\sf -alg}}
\newcommand{\wt}{\widetilde}
\newcommand{\p}[1]{p^{(#1)}}
\newcommand{\ta}[1]{\mbox{$\mathsf{Ta^{#1}}$}}
\newcommand{\Set}{\mbox{$\mathsf{Set}$}}
\newcommand{\St}{St\,}
\newcommand{\tr}[1]{Tr_{#1}}
\newcommand{\Disc}[1]{Disc_{#1}}
\newcommand{\funcat}[2]{[\Delta^{{#1}^{op}},#2]}
\newcommand{\funcatc}[1]{[\clC^{op},#1]}
\newcommand{\fatcat}[2]{[#1,#2]}
\newcommand{\funcatmon}[2]{[\Delta_{mono}^{{#1}^{op}},#2]}
\newcommand{\dblar}[2]{({#1} \rightrightarrows {#2})}
\newcommand{\pcir}{circle[radius=0.045cm]}
\newcommand{\DT}[1]{%
\begin{tikzpicture}[#1]%
\filldraw (0,0) circle[radius=0.0cm] (0,0.1)  circle[radius=0.04cm]; 
\end{tikzpicture}%
}
\newcommand{\dts}{_{\DT{scale=0.7}}}
\tikzset{commutative diagrams/row sep/normal=1cm}
\newcommand{\dotp}{\oset{\dts}{+}}
\begin{document}

\title {Weakly globular double categories and weak units}

\author{Simona Paoli}
\address{\small{Department of Mathematics, School of Computing and Natural Sciences, University of Aberdeen, UK}}
\email{simona.paoli@abdn.ac.uk}

\subjclass[2010]{18D05, 18G30 }
\keywords{}
\date{\today}

\begin{abstract}
Weakly globular double categories are a model of weak $2$-categories based on the notion of weak globularity, and they are known to be suitably equivalent to Tamsamani $2$-categories. Fair $2$-categories, introduced by J. Kock, model weak $2$-categories with strictly associative compositions and weak unit laws. In this paper we establish a direct comparison between weakly globular double categories and fair $2$-categories and prove they are equivalent after localisation with respect to the $2$-equivalences. This comparison sheds new light on weakly globular double categories as encoding a strictly associative, though not strictly unital, composition, as well as the category of weak units via the weak globularity condition.
\end{abstract}

\maketitle

\section{Introduction}\label{intro}

Higher category theory is a rapidly developing field with applications to disparate areas, from homotopy theory, mathematical physics, algebraic geometry to, more recently, logic and computer science.

Higher categories comprise not only objects and morphisms (like in a category) but also higher morphism, which compose and have identities. A key point in higher category theory is the behaviour of these compositions. In a category, composition of morphisms is associative and unital. Higher categories in which these rules for compositions hold for morphisms in all dimensions are called strict higher categories:
they are not difficult to formalize, but they are of limited use in applications. A striking example is the case of strict $n$-groupoids, which are strict $n$-categories with invertible higher morphisms. These are algebraic models for the building blocks of topological spaces (the $n$-types) only when $n=0,1,2$, see \cite{S2} for a counterexample.

To model $n$-types for all $n$ (that is, to satisfy the 'homotopy hypothesis'), a more complex class of higher structures is needed, the weak $n$-categories. In a weak $n$-category, compositions are associative and unital only up to an invertible cell in the next dimension, in a coherent way.

In this paper we concentrate on the case $n=2$. In \cite{PP} we introduced a new model $\catwg{2}$ of weak 2-categories, called weakly globular double categories, based on a new paradigm to weaken higher categorical structures, which is the notion of weak globularity.

In \cite{Kock2006} Kock introduced the category $\fair{2}$ of fair 2-categories, to model weak 2-categories with strict composition laws. This model is based on the 'fat delta' category $\zuD$, which plays a prominent role in this work. In this paper we establish a direct comparison between $\catwg{2}$ and $\fair{2}$: we build a pair of functors between these categories and show they induce an equivalence of categories after localization with respect to the 2-equivalences. The proof of this result is completely independent on the equivalence of $\catwg{2}$ and $\fair{2}$ with bicategories (as in \cite{PP} and \cite{Kock2006} respectively) and highlights new features of weakly globular double categories: the fact that the weak globularity condition encodes the category of weak units and the fact that it is possible to extract from a weakly globular double category a strictly associative (though not strictly unital) composition.

The proof of our comparison result is highly non-trivial: the construction of the functor from $\fair{2}$ to $\catwg{2}$ uses several new properties of the fat delta category $\zuD$
which we establish in this work. These properties allow to functorially build from a fair 2-category a pseudo-functor from $\Dop$ to $\Cat$ of a special type, namely a Segalic pseudo-functor \cite{PP}. From the latter we functorially build a fair 2-category as in \cite{PP}.

Another point of novelty is that to establish the zig-zags of $2$-equivalences giving rise to the equivalence of categories after localization between $\catwg{2}$ and $\fair{2}$, we need to enlarge the context by introducing two new players: the category of Segalic pseudo-functors from the opposite of the 'fat delta' category to $\Cat$ and the category $\fairwg{2}$ of weakly globular fair $2$-categories.

Although this paper is about the case $n=2$, we envisage that the techniques developed here will be useful in the case of general dimension $n$. This will be tackled in future work, but we explain here the general set up for motivation.

The category $\catwg{2}$ was generalized in \cite{PBook2019} to the category $\catwg{n}$ of weakly globular $n$-fold categories and it was shown in \cite[Theorem 12.3.11]{PBook2019} that it satisfies the homotopy hypothesis: there is a subcategory $\gcatwg{n}\subset \catwg{n}$ called groupoidal weakly globular $n$-fold categories which is an algebraic model of $n$-types.

The category of Segalic pseudo-functors has been generalized to higher $n$ in \cite{PBook2019} and like for $n=2$, it is closely connected to $\catwg{n}$.

The category $\fair{2}$ was generalized in \cite{Kock2006} to $\fair{n}$ for any $n$. The latter encodes higher categories where all compositions are strictly associative but not strictly unital. For $n>2$, to date it is not known if $\fair{n}$ satisfies the homotopy hypothesis, except for the special case of 1-connected 3-types \cite{JoyalKock2007}.

For general $n$, one would seek comparison functors between $\catwg{n}$ and $\fair{n}$, factoring through the category of $n$-dimensional Segalic pseudo-functors, inducing an equivalence of categories after localization with respect to the $n$-equivalences.

As in the case $n=2$ (see Corollary \ref{compar-cor2}) we envisage this to restrict to an equivalence (after localization) between $\gcatwg{n}$ and $\gfair{n}$, the latter being a groupoidal version of fair $n$-categories. Since by \cite{PBook2019} $\gcatwg{n}$ is an algebraic model of $n$-types, this would mean that fair $n$-categories satisfy the homotopy hypothesis. This would give a proof of Simpson's weak units conjecture \cite{Simp}.

We envisage the case of general $n$ to be based on induction, the present paper for $n=2$ being the first step.

\bk

\nid \textbf{Organization of the paper} Sections \ref{backgr} to \ref{fa2cat} cover the necessary background: $2$-categorical techniques (Section \ref{backgr}), weakly globular double categories (Section \ref{sbs-wg-doubcat}), the fat delta (Section \ref{fatdelta_first}), fair $2$-categories (Section \ref{fa2cat}). These sections are expository although we adopt a different definition of the fat delta than the one of \cite{Kock2006} and we introduce corresponding new notation.

In Section \ref{fatdels} we establish some new properties of the 'fat delta' category $\zuD$ which are needed later on.
The comparison between $\catwg{2}$ and $\fair{2}$ is made of two parts. In Section \ref{weak2fair} we explain the passage from weakly globular double categories to fair $2$-categories. We construct in Theorem \ref{StrongpseThe-1} the functor
  \begin{equation*}
  F_2:\catwg{2}\rw \fair{2}
  \end{equation*}
   and in Proposition \ref{fairtowg2Cor-2} a natural transformation in $[\zuDop,\Cat]$
  \begin{equation*}
    S_2(X):F_2(X)\rw\tilde\pi^* X
  \end{equation*}
 (with $\tilde\pi^* X$ as in Definition \ref{fairtowg2Def-2}) which is a levelwise equivalence of categories.

In Section \ref{fairtowg2} we treat the other direction, from fair $2$-categories to weakly globular double categories. We define the functor (Definition \ref{fairtowg2Def-5})
\begin{equation*}
R_2:\fair{2}\rw\catwg{2}
\end{equation*}

We show in Section \ref{comparison} our main result Theorem \ref{compar-the1} that the functors $F_2$ and $R_2$ induce an equivalence of categories after localization with respect to the $2$-equivalences.
We prove this result by constructing for each $X\in \catwg{2}$ a $2$-equivalence $R_2 F_2 X \rw X$ in $\catwg{2}$  and a zig-zag of $2$-equivalences in $\fair{2}$ between $Y$ and $F_2 R_2 Y$ for each $Y\in \fair{2}$.
The construction of this zig-zag requires new notions and results developed in Section \ref{wgfair2}: the category of Segalic pseudo-functors $\segps\fatcat{\zuDop}{\Cat}$, the category $\fairwg{2}$ of weakly globular fair $2$-categories and Theorem \ref{fairtowg2The-1} relating the two.\medskip

\nid \textbf{Acknowledgements} This paper is partially based upon work supported by the National Science Foundation under Grant No. DMS-1440140 while the author was in residence at the Mathematical Sciences Research Institute in Berkeley, California, during the 'Higher Categories and Categorification' program in Spring 2020. I thank the organizers for their invitation to this program. I also thank the referee for many helpful comments.

\section{Techniques from $2$-category theory}\label{backgr}

In this Section we recall two techniques from $2$-category theory. The first is the strictification of pseudo-functors: this plays an important role in the theory of weakly globular double categories, as recalled in Section \ref{sbs-wg-doubcat}, and it will also be used in Section \ref{wgfair2} whose results are crucial to the proof of our main Theorem \ref{compar-the1}. The second technique is the transport of structure along an adjunction, which will be used in Proposition \ref{fairtowg2-pro1}, leading to the functor $T_2$ of Theorem \ref{fairtowg2-the1}. 

\subsection{Strictification of pseudo-functors}\label{sbs-strict-psfun}

Let $\mathcal{C}$ be a small category. The $2$-category of 2-functors, 2-natural transformations and modifications  $\funcatc{\Cat}$ is $2$-monadic over $[\ob(\Cop),\;\Cat]$ where $\ob(\Cop)$ is the set of objects of $\Cop$. Let
\begin{equation*}
    U:\funcatc{\Cat}\rw [\ob(\Cop),\Cat]
\end{equation*}
be the forgetful functor given by $(UX)_{k}=X_{k}$ for each $k\in\Cop$ and $X\in\funcatc{\Cat}$. Its left adjoint $F$ is given on objects by
\begin{equation*}
    (FY)_{k}=\underset{r\in \ob(\Cop)}{\cop}\Cop(r,k)\times Y_{r}
\end{equation*}
for $Y\in [\ob(\Cop),\Cat]$, $k\in \Cop$. If $T$ is the monad corresponding to the adjunction $F\dashv U$, then
\begin{equation*}
    (TY)_{k}= (UFY)_{k}= \underset{r\in \ob(\Cop)}{\cop}\Cop(r,k)\times Y_{r}\;.
\end{equation*}
A pseudo $T$-algebra is given by $Y\in [\ob(\Cop),\Cat]$, functors
\begin{equation*}
    h_{k}: \underset{r\in \ob(\Cop)}{\cop}\Cop(r,k)\times Y_{r} \rw Y_{k}
\end{equation*}
and additional data given by the axioms of pseudo $T$-algebra (see for instance \cite{PW}). This amounts precisely to functors from $\Cop$ to $\Cat$ and the $2$-category $\PsTalg$ of pseudo $T$-algebras  corresponds to the $2$-category $\Ps\funcatc{\Cat}$ of pseudo-functors, pseudo-natural transformations and modifications. Note that there is a commuting diagram
\begin{equation*}
\xymatrix{
\funcatc{\Cat} \ar@{^{(}->}[r] \ar_{U}[d] & \Ps\funcatc{\Cat} \ar_{U}[dl]\\
[\ob(\Cop),\Cat]
}
\end{equation*}
Recalling that, if $X$ is a set, $X\times \clC \cong \uset{X}{\cop}\clC$, we see that the pseudo $T$-algebra corresponding to $H \in \Ps\funcatc{\Cat}$ has structure map $h: TUH\rw UH$ as follows. Denoting
\begin{equation*}
(TUH)_{k}=\underset{r\in\clC}{\cop}\clC(k,r)\times H_{r}=\underset{r\in\clC}{\cop}\underset{\clC(k,r)}{\cop} H_{r}\;.
\end{equation*}
and, if $f\in\clC(k,r)$, denoting
\begin{equation*}
i_{r}=\underset{\clC(k,r)}{\cop}H_{r}\rw \underset{r\in\clC}{\cop}\underset{\clC(k,r)}{\cop} H_{r}=(TUH)_{r}\;,
 \end{equation*}
 \begin{equation*}
   j_f :H_{r}\rw \underset{\clC(k,r)}{\cop}H_{r}
   \end{equation*}
the coproduct inclusions, then the map $h$ is the unique map satisfying
\begin{equation}\label{eq-sbs-strict-psfun-1}
   h_{k}\,i_{r}\,j_f= H(f)\;.
\end{equation}
The structure map $TUH\, \rw\, UH$ carries a canonical enrichment to a pseudo-natural transformation $FUH \,\rw\,  H$.\; The strictification of pseudo-algebras result proved in \cite{PW} yields that every pseudo-functor from $\Cop$\! to $\Cat$ is equivalent, in $\Ps\funcatc{\Cat}$, to a $2$-functor, that is, an object of $\funcatc{\Cat}$.

Given a pseudo $T$-algebra as above, \cite{PW} considers the factorization  of\linebreak $h:TUH\rw UH$ as
\begin{equation*}
    TUH\xrw{v}L\xrw{g}UH
\end{equation*}
with $v_{k}$ bijective on objects and $g_{k}$ fully faithful, for each $k\in\Cop$. It is shown in \cite{PW} that $g$ is a pseudo-natural transformation and it is possible to give a strict $T$-algebra structure $TL\rw L$ such that $(g,Tg)$ is an equivalence of pseudo $T$-algebras. It is immediate to see that, for each $k\in\Cop$, $g_{k}$ is an equivalence of categories.

We define
\begin{equation}\label{eq-sbs-strict-psfun-2}
L=\St H\;.
\end{equation}
The above constructions are natural in \cite{PW} so given a morphism $H\rw H'$ in $\Ps[\clC^{op},\Cat]$ this gives a morphism $\St H=L\rw L'=\St H'$ in  $[\clC^{op},\Cat]$.
Further, it is shown in \cite{Lack} that $\St:\Ps\funcatc{\Cat}\rw\funcatc{\Cat}$ as defined in \eqref{eq-sbs-strict-psfun-2} is left adjoint to the inclusion
\begin{equation*}
  J:\funcatc{\Cat}\rw\Ps\funcatc{\Cat}
\end{equation*}
 and that the components of the units are equivalences in $\Ps\funcatc{\Cat}$.
\begin{remark}\label{strict-psfun-Rem-1}
  It is straightforward from \cite{PW} that if $H\in\funcatc{\Cat}$ the pseudo-natural transformation $g:\St H \rw H$ is a 2-natural transformation.
\end{remark}

In this work we use the strictification of pseudo-functors in Section \ref{wgfair2} (in the case where $\clC=\zuD$) and in Section \ref{fairtowg2} (in the case  where $\clC=\zD$). As we recall in Section \ref{sbs-wg-doubcat}, this technique also plays a crucial role in the theory of weakly globular double categories.


\subsection{Transport of structure}\label{sbs-trans}
The following $2$-categorical technique will be used in Section \ref{fairtowg2}. Its proof relies on \cite[Theorem 6.1]{lk}.

\begin{lemma}{\rm{\cite{PP}}}\label{lem-PP}
     Let $\clC$ be a small $2$-category, $F,F':\clC\rw\Cat$ be $2$-functors, and $\mu:F\rw F'$
    a $2$-natural transformation. Suppose that, for all objects $C$ of $\clC$, the
    following conditions hold:
\begin{enumerate}
  \item $G(C),\;G'(C)$ are objects of $\Cat$ and there are adjoint equivalences of
  categories $\zb_C\vdash\za_C$, $\zb'_C\vdash\za'_C$,
\begin{equation*}
  \zb_C:G(C)\;\rightleftarrows\;F(C):\za_C\qquad\qquad
  \zb'_C:G'(C)\;\rightleftarrows\;F'(C):\za'_C,
\end{equation*}

  \item  there are functors $\xi_C:G(C)\rw G'(C),$\mk

  \item  there is an invertible $2$-cell
\begin{equation*}
  \gamma_C:\xi_C\,\za_C\Rightarrow\za'_C\,\mu_C.
\end{equation*}
\end{enumerate}
Then
\begin{itemize}
  \item [a)] There exists a pseudo-functor $G:\clC\rw\Cat$ given on objects by $G(C)$,
  and pseudo-natural transformations $\za:F\rw G$, $\zb:G\rw F$ with
  $\za(C)=\za_C$, $\zb(C)=\zb_C$; these are part of an adjoint equivalence
  $\zb\vdash\za$ in the $2$-category $\Ps[\clC,\Cat]$. Similarly there is a pseudo-functor $G':\clC\rw\Cat$ and pseudo-natural transformations $\za':F'\rw G'$ and $\zb':G'\rw F'$.\mk

  \item [b)] There is a pseudo-natural transformation $\xi:G\rw G'$ with
  $\xi(C)=\xi_C$ and an invertible $2$-cell in $\Ps[\clC,\Cat]$,
  $\gamma:\xi\za\Rightarrow\za'\mu$ with $\gamma(C)=\gamma_C$.
\end{itemize}
\end{lemma}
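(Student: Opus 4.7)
The plan is to invoke Kelly's transport-of-structure theorem \cite[Theorem 6.1]{lk} twice, once with $F$ and once with $F'$, to obtain part (a); part (b) is then assembled from the pointwise data $\beta_C, \gamma_C$ using the mate correspondence under the adjoint equivalences.

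First I would construct $G$. For each morphism $f : C \rw D$ of $\clC$ set
\begin{equation*}
G(f) := \eta_D \circ F(f) \circ \mu_C : G(C) \rw G(D).
\end{equation*}
The failure of this to be strictly functorial is controlled by the invertible counit $\zve_C : \mu_C \eta_C \Rw \mathrm{Id}_{F(C)}$ of the adjoint equivalence $\mu_C \vdash \eta_C$: whiskering $\zve_D$ into the composite $G(g) G(f) = \eta_E F(g) \mu_D \eta_D F(f) \mu_C$ produces an invertible comparison $G(g) G(f) \cong G(gf)$, and the invertible unit yields $\mathrm{Id}_{G(C)} \cong G(\mathrm{Id}_C)$. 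Kelly's theorem guarantees that these 2-cells satisfy the pseudo-functor coherence axioms, that $\eta_C$ and $\mu_C$ extend canonically to pseudo-natural transformations $\eta : F \rw G$ and $\mu : G \rw F$ whose naturality squares are built from $\zve$ and its inverse, and that the original unit and counit lift to an adjoint equivalence $\mu \vdash \eta$ in $\Ps\fatcat{\clC}{\Cat}$. Running the same recipe on $F', \mu'_C, \eta'_C$ yields $G', \eta', \mu'$ and proves part (a).

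For part (b), for each $f : C \rw D$ I must produce an invertible 2-cell
\begin{equation*}
\beta(f) : \beta_D \circ G(f) \Rw G'(f) \circ \beta_C,
\end{equation*}
whose source is $\beta_D \eta_D F(f) \mu_C$ and target is $\eta'_D F'(f) \mu'_C \beta_C$. I would bridge them by pasting three invertible 2-cells: whisker $\gamma_D$ on the right by $F(f) \mu_C$ to reach $\eta'_D \alpha_D F(f) \mu_C$; use the strict 2-naturality of $\alpha$ to rewrite $\alpha_D F(f) = F'(f) \alpha_C$; and whisker on the left by $\eta'_D F'(f)$ the inverse mate $\alpha_C \mu_C \Rw \mu'_C \beta_C$ of $\gamma_C$ under the adjunctions $\mu_C \vdash \eta_C$ and $\mu'_C \vdash \eta'_C$, which is invertible because $\gamma_C$ is and both adjunctions are equivalences. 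Setting $\gamma(C) := \gamma_C$ then gives the required $\gamma : \beta\eta \Rw \eta'\alpha$, which is a modification in $\Ps\fatcat{\clC}{\Cat}$ by the very construction of $\beta(f)$.

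The main obstacle will be verifying the pseudo-naturality hexagon of $\beta$, namely that $\beta(gf)$ agrees with the pasting of $\beta(f)$ and $\beta(g)$ modulo the coherence isomorphisms of $G$ and $G'$. This reduces to a pasting-diagram chase using the middle-four interchange law, the triangle identities for the two adjoint equivalences, naturality of the mate correspondence, and the strict 2-naturality of $\alpha$. Since all the coherences involved have already been packaged by Kelly's theorem in the construction of $G$ and $G'$, this final check is routine, although some bookkeeping is required.
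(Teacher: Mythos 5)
The paper gives no proof of this lemma, citing it from \cite{PP} and noting only that the argument rests on Kelly's transport-of-structure theorem \cite[Theorem 6.1]{lk}; your proposal invokes exactly that theorem for part (a) and assembles part (b) from the pointwise data via mates, which is the same route. The construction of $G(f)=\eta_D F(f)\mu_C$, of $\beta(f)$ by pasting $\gamma_D$, the $2$-naturality of $\alpha$, and the inverse mate of $\gamma_C$, is correct, and the remaining coherence checks are routine as you say.
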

\begin{proof}\

Recall \cite{PW} that the functor 2-category $[\clC,\Cat]$ is 2-monadic over
$[\ob(\clC),\Cat]$, where $\ob(\clC)$ is the set of objects in $\clC$. Let
\begin{equation*}
  \clU:[\clC,\Cat]\rw[\ob(\clC),\Cat]
\end{equation*}
be the forgetful functor. Let $T$ be the 2-monad; then the pseudo-$T$-algebras are precisely the pseudo-functors from
$\clC$ to $\Cat$.

The adjoint equivalences $\zb_C\vdash\za_C$ amount precisely to an adjoint equivalence in $[\ob(\clC),\Cat]$, $\;\zb_0\vdash\za_0$, $\;\zb_0:G_0\;\;\rightleftarrows\;\;\clU F:\za_0$, where $\;G_0(C)=G(C)$ for
all $C\in \ob(\clC)$. By \cite[Theorem 6.1]{lk} this equivalence enriches to an adjoint equivalence $\zb\vdash\za$ in $\Ps[\clC,\Cat]$
\begin{equation*}
  \zb:G\;\rightleftarrows\; F:\za
\end{equation*}
between $F$ and a pseudo-functor $G$; we have $\clU G=G_0$, $\;\clU\za=\za_0$,
$\;\clU\zb=\zb_0$; hence on objects $G$ is given by $G(C)$, and
$\za(C)=\clU\za(C)=\za_C$, $\;\zb(C)=\clU\zb(C)=\zb_C$.

Let $\nu_C:\Id_{G(C)}\Rw\za_C\zb_C$ and $\zve_C:\zb_C\za_C\Rw\Id_{F(C)}$ be
the unit and counit of the adjunction $\zb_C\vdash\za_C$. Given a morphism $f:C\rw D$ in $\clC$, we have
\begin{equation*}
  G(f)=\za_D F(f)\zb_C
\end{equation*}
Given morphisms $C\xrw{f}D\xrw{g}E$ in $\clC$, the 2-cell
\begin{equation*}
\xymatrix{
G(C) \ar@/_2.7pc/[rrrr]_{G(gf)} \ar^{G(f)}[rr] && G(D) \ar@{}[d]|{\big\Downarrow} \ar^{G(g)}[rr] && G(E)\\
&& \
}
\end{equation*}
is obtained by the following pasting diagram
\begin{equation*}
\xymatrix{
G(C) \ar^{G(f)}[rr]\ar_{\zb_{C}}[d] && G(D)\ar@{}|{\big\Downarrow\zve_D}[d]\ar^{G(g)}[rr]\ar^{\zb_{D}}[dr] && G(E)\\
F(C)\ar_{F(f)}[r] & F(D) \ar^{\za_D}[ur]\ar@{}|{=}[rr] && F(D) \ar_{F(g)}[r] & F(E)\ar_{\za_E}[u]
}
\end{equation*}
while, for each $C\in\clC$, the 2-cell
\begin{equation*}
\xymatrix{
G(C)  \ar@/_2.7pc/[rr]_{\Id_{G(C)}} \ar^{G(\Id_C)}[rr]  & \ar@{}|{\big\Downarrow}[d] & G(C)\\
& \ &
}
\end{equation*}
is given by $\nu_C^{-1}:\za_C\zb_C=G(\Id_G)\Rw\Id_{G(C)}$. These data satisfy the coherence axioms for pseudo-functors by  \cite[Theorem 6.1]{lk}.

We have natural isomorphisms:
\begin{eqnarray*}
   \za_f &:& G(f)\za_C=\za_D F(f)\zb_C\za_C\overset{\za_D F(f)\zve_C}{=\!=\!=\!=\!\Rw} \za_D F(f)\\
   \zb_f &:& F(f)\zb_C\overset{\nu_{F(f)}\zb_C}{=\!=\!=\!\Rw}\zb_D\za_D F(f)\zb_C=\zb_D
   G(f).
\end{eqnarray*}
Also, the natural isomorphism
\begin{equation*}
\xi_f: G'(f)\xi_C\Rw\xi_D G(f)
\end{equation*}
is the result of the following pasting
{\labelmargin-{1pt}
\begin{equation*}
    \xy
    0;/r.16pc/:
    (-40,40)*+{G(C)}="1";
    (40,40)*+{G'(C)}="2";
    (-40,-40)*+{G(D)}="3";
    (40,-40)*+{G'(D)}="4";
    (-20,20)*+{F(C)}="5";
    (20,20)*+{F'(C)}="6";
    (-20,-20)*+{F(D)}="7";
    (20,-20)*+{F'(D)}="8";
    {\ar^{\xi_C}"1";"2"};
    {\ar_{G(f)}"1";"3"};
    {\ar^{G'(f)}"2";"4"};
    {\ar_{\xi_D}"3";"4"};
    {\ar^{\mu_C}"5";"6"};
    {\ar^{F(f)}"5";"7"};
    {\ar_{F'(f)}"6";"8"};
    {\ar_{\mu_D}"7";"8"};
    {\ar_{\za_C}"5";"1"};
    {\ar^{\za_D}"7";"3"};
    {\ar^{\za'_C}"6";"2"};
    {\ar_{\za'_D}"8";"4"};
    {\ar@{=>}^{\gamma_C}(0,33);(0,27)};
    {\ar@{=>}^{\gamma_D^{-1}}(0,-27);(0,-33)};
    {\ar@{=>}^{\za_f'}(30,3);(30,-3)};
    {\ar@{=>}^{\za_f}(-30,3);(-30,-3)};
\endxy
\end{equation*}}
\end{proof}
This diagram gives invertible 2-cells:
\begin{equation*}
\begin{split}
\zg_c\Id_{\za_C^{-1}} &: \xi_C\Rw\za'_C\mu_C\za_C^{-1} \\
  \za'_f &: G'(f)\za_C^{'}\Rw\za'_D F'(f) \\
  \zg^{-1}_D &: \za'_D\mu_D\Rw\xi_D\za_D \\
  \za^{-1}_f & : \za_D F(f)\Rw G(f)\za_C\;.
\end{split}
\end{equation*}
Using the fact that $F'(f)\mu_C=\mu_D F(f)$, we obtain the 2-cell $\xi_f$ by composition of the following invertible 2-cells:
\begin{equation*}
\begin{split}
   & G'(f)\xi_C\xRw{\Id_{G'(f)}\zg_C\Id_{\za^{-1}_C}} G'(f)\za'_C\mu_C\za_C^{-1} \xRw{\za_{f'}\Id_{\mu_C\za^{-1}_{C}}} \za'_D F'(f)\mu_C\za^{-1}_{C}=\\
   =& \za'_D\mu_DF(f)\za^{-1}_{C}\xRw{\zg^{-1}_{D}\Id_{F(f)\za^{-1}_C}} \xi_D\za_D F(f)\za^{-1}_C \xRw{\Id_{\xi_D}\za^{-1}_f\Id_{\za^{-1}_C}}\xi_D G(f)\za_C\za^{-1}_C=\xi_D G(f) \\
\end{split}
\end{equation*}

\begin{remark}\label{sbs-trans-rem1}
We can specialize Lemma \ref{lem-PP} to the case where $F=F',\,\mu=\Id$ and, for all $C\in\ob\clC$, $G(C)=G'(C)$, $\xi_C=\za'_C\zb_C$ and $\zg_C:\xi_C\za_C=\za'_C\zb_C\za_C\Rightarrow \za'_C$ is given by $\za'_C\zve_C$. This amounts to constructing pseudo-functors $G,G':\clC\rw\Cat$ (with $G(C)=G'(C)$) from the functor $F$ using two distinct choices of adjoint equivalences of categories $\zb_C\vdash \za_C$ and $\zb'_C\dashv \za'_C$. This yields a pseudo-natural transformation $\xi:G\rw G'$ with $\xi(C)=\xi_C$.

Similarly, applying Lemma  \ref{lem-PP} when $F=F'$, $\mu=\Id$, $G(C)=G'(C)$ and $\xi'_C:G'(C)\rw G(C)$ is given by $\xi'_C=\za_C\zb'_C$ and $\zg'_C:\xi'_C\za'_C=\za_C\zb'_C\za'_C\Rightarrow \za_C$ is given by $\za_C\zve'_C$. This yields a pseudo-natural transformation $\xi':G'\rw G$ with $\xi'(C)=\xi'_C$.

The two pseudo-natural transformations $\xi$ and $\xi'$ are an equivalence between pseudo-functors $G$ and $G'$ in the 2-category $\Ps[\clC,\Cat]$ since $\xi'_C\xi_C=\za_C\zb'_C\za'_C\zb_C\cong\za_C\zb_C\cong\Id$ and $\xi_C\xi'_C=\za'_C\zb_C\za_C\zb'_C\cong\za'_C\zb'_C\cong\Id$.
\end{remark}


\section{Weakly globular double categories and Segalic pseudo-functors}\label{sbs-wg-doubcat}
We recall the theory of weakly globular double categories and of Segalic pseudo-functors, originally introduced in \cite{PP} and further developed in \cite{PBook2019}.

\subsection{Weakly globular double categories}\label{sbs-wg-d} We first need the notion of Segal maps and of induced Segal maps.
\begin{definition}\label{wg-doubcat-def-1}
    Let ${X\in\funcat{}{\clC}}$ be a simplicial object in a category $\clC$ with pullbacks. For each ${1\leq j\leq k}$ and $k\geq 2$, let ${\nu_j:X_k\rw X_1}$ be induced by the map  $[1]\rw[k]$ in $\Delta$ sending $0$ to ${j-1}$ and $1$ to $j$. Then the following diagram commutes:
\small
\begin{equation}\label{wg-doubcat-eq-1}
\begin{gathered}
\xymatrix@C=18pt{
&&&& X\sb{k} \ar[llld]_{\nu\sb{1}} \ar[ld]_{\nu\sb{2}} \ar[rrd]^{\nu\sb{k}} &&&& \\
& X\sb{1} \ar[ld]_{d\sb{1}} \ar[rd]^{d\sb{0}} &&
X\sb{1} \ar[ld]_{d\sb{1}} \ar[rd]^{d\sb{0}} && \dotsc &
X\sb{1} \ar[ld]_{d\sb{1}} \ar[rd]^{d\sb{0}} & \\
X\sb{0} && X\sb{0} && X\sb{0} &\dotsc X\sb{0} && X\sb{0}
}
\end{gathered}
\end{equation}
\normalsize
If  ${\pro{X_1}{X_0}{k}}$ denotes the limit of the lower part of the
diagram \eqref{wg-doubcat-eq-1}, the \emph{$k$-th Segal map of $X$} is the unique map
$$
\muk:X\sb{k}~\rw~\pro{X\sb{1}}{X\sb{0}}{k}
$$
\noindent such that ${\pr_j\,\muk=\nu\sb{j}}$ where
${\pr\sb{j}}$ is the $j^{th}$ projection.
\end{definition}
Let $\Cat\clC$ be the category of internal categories in $\clC$ and internal functors \cite{Borc} and let $N:\Cat\clC\rw\funcat{}{\clC}$ be the nerve functor. The Segal maps characterize the essential image of $N$. Namely, an object $X\in\funcat{}{\clC}$ is in the essential image of $N$ if and only if its Segal maps $X_k\rw\pro{X_1}{X_0}{k}$ are isomorphisms for all $k\geq 2$.
\begin{remark}\label{wg-doubcat-rem-00}
When $\clC=\Set$, $N:\Cat\rw\funcat{}{\Set}$ is the nerve of small categories. This functor is fully faithful, so we can identify $\Cat$ with the essential image of $N$. We will make this identification throughout this work.
\end{remark}
\begin{definition}\label{wg-doubcat-def-2}

    Let ${X\in\funcat{}{\clC}}$ and suppose that there is a map in $\clC$
     \begin{equation*}
       \zg: X_0 \rw Y
     \end{equation*}
      such that the limit of the diagram
\begin{equation*}
\xymatrix@R25pt@C16pt{
& X\sb{1} \ar[ld]_{\zg d\sb{1}} \ar[rd]^{\zg d\sb{0}} &&
X\sb{1} \ar[ld]_{\zg d\sb{1}} \ar[rd]^{\zg d\sb{0}} &\cdots& k &\cdots&
X\sb{1} \ar[ld]_{\zg d\sb{1}} \ar[rd]^{\zg d\sb{0}} & \\
Y && Y && Y\cdots &&\cdots Y && Y
    }
\end{equation*}
exists; denote the latter by $\pro{X_1}{Y}{k}$. Then the following diagram commutes, where $\nu_j$ is as in Definition \ref{wg-doubcat-def-1}, and $k\geq 2$
\begin{equation*}
\xymatrix@C=20pt{
&&&& X\sb{k} \ar[llld]_{\nu\sb{1}} \ar[ld]_{\nu\sb{2}} \ar[rrd]^{\nu\sb{k}} &&&& \\
& X\sb{1} \ar[ld]_{\zg d\sb{1}} \ar[rd]^{\zg d\sb{0}} &&
X\sb{1} \ar[ld]_{\zg d\sb{1}} \ar[rd]^{\zg d\sb{0}} && \dotsc &
X\sb{1} \ar[ld]_{\zg d\sb{1}} \ar[rd]^{\zg d\sb{0}} & \\
Y && Y && Y &\dotsc Y && Y
}
\end{equation*}
The \emph{$k$-th induced Segal map of $X$} is the unique map
\begin{equation*}
\hmuk:X\sb{k}~\rw~\pro{X\sb{1}}{Y}{k}
\end{equation*}
such that ${\pr_j\,\hmuk=\nu\sb{j}}$ where ${\pr\sb{j}}$ is the $j^{th}$ projection. If $Y=X_0$ and $\gamma$ is the identity, the induced Segal map coincides with the Segal map of Definition \ref{wg-doubcat-def-1}.
\end{definition}
\begin{definition}\label{wg-doubcat-def-3}
  A homotopically discrete category is an equivalence relation, that is a groupoid with no non-trivial loops. We denote by $\cathd{}$ the category of homotopically discrete categories.
\end{definition}
Let $p:\Cat\rw\Set$ be the isomorphism classes of objects functor. As discussed for instance in \cite[Lemma 4.1.4]{PBook2019} $p$ preserves pullbacks over discrete objects and sends equivalences of categories to isomorphisms.

\begin{definition}\label{discr}
If $X\in\cathd{}$, we denote by $X^d$ the discrete category on the set $p X$. There is a map $\zg: X\rw X^d$, called discretization, which is an equivalence of categories.
\end{definition}

The category of weakly globular double categories was originally introduced in \cite{PP} and further studied in \cite{PBook2019}:

\begin{definition}\label{wg-doubcat-def-4}
The category $\catwg{2}$ of weakly globular double categories is the full subcategory of $[\Dop,\Cat]$ whose objects $X$ are such that
\begin{itemize}
  \item [a)] $X_0\in\cathd{}$.\mk

  \item [b)] For each $k\geq 2$ the Segal maps
  \begin{equation*}
    \muk:X_k\rw \pro{X_1}{X_0}{k}
  \end{equation*}
are isomorphisms.\mk

  \item [c)] For each $k\geq 2$ the induced Segal maps
  \begin{equation*}
    \hmuk:X_k\rw \pro{X_1}{X_0^d}{k}
  \end{equation*}
  which are induced by the discretization map $\zg: X_0\rw X_0^d$ are equivalences of categories.
\end{itemize}
\end{definition}
Note that because of condition b), $\catwg{2}$ is a full subcategory of the category $\Cat^2$ of double categories, that is of internal categories in $\Cat$.
\begin{remark}\label{wg-doubcat-rem-1}
 Let $\p{1}:\catwg{2}\rw\funcat{}{\Set}$ be given by $(\p{1}X)_k=p X_k$ for all $k\geq 0$. Then $\p{1}X$ is the nerve of a category. In fact, since $p$ sends equivalences of categories to isomorphisms and preserves pullbacks over discrete objects, for each $X\in\catwg{2}$ and $k\geq 2$ there are isomorphisms
 \begin{equation*}
 \begin{split}
 & (\p{1}X)_k=p(X_k)\cong p(\pro{X_1}{X_0^d}{k})\cong \\
 & \cong \pro{p(X_1)}{p(X_0^d)}{k}\cong \pro{p(X_1)}{p(X_0)}{k}\;.
 \end{split}
 \end{equation*}
Thus, using the notational convention of remark \ref{wg-doubcat-rem-00} we can write
\begin{equation*}
  \p{1}:\catwg{2}\rw \Cat\;.
\end{equation*}
\end{remark}
In what follows, given $X\in\catwg{2}$ and $a,b\in X_0^d$ we denote by $X(a,b)$ the fiber at $(a,b)$ of the map given by the composite
\begin{equation*}
  X_1\xrw{(\pt_0,\pt_1)}X_0\times X_0\xrw{\zg\times\zg}X^d_0\times X^d_0\;.
\end{equation*}
where $\zg: X_0\rw {X_0}^d$ is the discretization map as in Definition \ref{discr}.
The category $X(a,b)$ plays the role of 'hom-category'.
\begin{definition}\label{wg-doubcat-def-5}
  A morphism $F:X\rw Y$ in $\catwg{2}$ is a $2$-equivalence if:
  \begin{itemize}
    \item [i)] For all $a,b\in X_0^d$ the morphism $F_{(a,b)}:X(a,b)\rw Y(Fa,Fb)$ is an equivalence of categories.\mk

    \item [ii)] $\p{1}F$ is an equivalence of categories.
  \end{itemize}
\end{definition}
\begin{remark}\label{wg-doubcat-rem-2} The following properties were shown in \cite{PBook2019}:

\begin{itemize}
  \item [a)]If $F$ is a levelwise equivalence of categories it is in particular a $2$-equivalence. When $F_0=\Id$, the two notions coincide.\mk

  \item [b)] Condition ii) in Definition \ref{wg-doubcat-def-5} can be relaxed to requiring that $p\p{1}X$ is surjective.\mk

  \item [c)] $2$-Equivalences in $\catwg{2}$ have the $2$-out-$3$ property.
\end{itemize}

\end{remark}

\begin{definition}\label{D2def}
  Given $X\in\catwg{2}$ let $D_2 X\in\funcat{}{\Cat}$ be
  \begin{equation*}
    (D_2 X)_n=
    \left\{
      \begin{array}{ll}
        X_0^d, & n=0 \\
        X_n, & n>0\;.
      \end{array}
    \right.
  \end{equation*}%
The face operators $\pt'_0,\pt'_1:X_1\rightrightarrows X_0^d$ are given by $\pt'_i=\zg\pt_i$ where  $\pt_i:X_1\rightrightarrows X_0,\;\, i=0,1$ while the degeneracy operator $\zs'_0:X_0^d\rw X_1$ is $\zs'_0=\zs_0\zg'$ where $\zg': X_0^d\rw X_0$ is a pseudo-inverse of $\zg$,\, $\zg\zg'=\Id$. All the other face and degeneracy operators of $D_2 X$ are as in $X$.
\end{definition}

\begin{remark}\label{D2def-Rem}
  $D_2X$ can be obtained by transport of structure along the equivalences of categories $(D_2X)_k\simeq X_k$ given by $\gamma'$ for $k=0$ and $id$ for $k>0$. Thus by Lemma \ref{lem-PP} there is a pseudo-natural transformation $D_2 X\rw X$ in $\Ps\funcat{}{\Cat}$ which is a levelwise equivalence of categories.
\end{remark}

\subsection{Weakly globular Tamsamani $2$-categories and Segalic pseudo-functors} We recall from \cite{PBook2019} the definitions of the categories $\tawg{2}$ of weakly globular Tamsamani $2$-categories and $\segpsc{}\Cat$ of Segalic pseudo-functors, as well as the construction of the functor
\begin{equation*}
 \tr{2}:\tawg{2}\rw\segpsc{ }{\Cat}.
\end{equation*}
These play an important role in Section \ref{weak2fair} in building the functor $F_2:\catwg{2}\rw\fair{2}$.
\begin{definition}\label{segpseudo-def-1}\cite{PBook2019}
  The category $\tawg{2}$ of weakly globular Tamsamani $2$-categories is the full subcategory of $\funcat{} {\Cat}$ whose objects $X$ are such that
  \begin{itemize}
    \item [i)] $X_0\in\cathd{}$.\mk

    \item [ii)] The induced Segal maps $\hmu_k:X_k\rw \pro{X_k}{X_0^d}{k}$ are equivalences of categories for all $k\geq 2$.
  \end{itemize}
\end{definition}
\begin{remark}\label{segpseudo-rem-1}\

\begin{itemize}
  \item [a)] From the definitions, $\catwg{2}$ is the full subcategory of $\tawg{2}$ whose objects $X$ are such that all the Segal maps are isomorphisms.\mk

  \item [b)] There is a functor $\p{1}:\tawg{2}\rw\Cat$ given by $(\p{1}X)_k=p X_k$, $k\geq 0$. The proof that the essential image of $\p{1}:\tawg{2}\rw\funcat{}{\Set}$ consists of nerves of categories is as in the case of $\catwg{2}$.
\end{itemize}
\end{remark}
\nid The category $\ta{2}$ of Tamsamani $2$-categories was originally introduced in \cite{Ta} but can now be seen as a subcategory of $\tawg{2}$ as follows:

\begin{definition}\label{segpseudo-def-2}
  The full subcategory of $\tawg{2}$ whose objects $X$ are such that $X_0$ is discrete is the category $\ta{2}$ of Tamsamani $2$-categories.
\end{definition}
  Note that for Tamsamani $2$-categories the induced Segal maps and the Segal maps coincide.\bk

Let $H\in\Ps\funcat{}{\Cat}$ be such that $H_0$ is discrete. There is a commuting diagram in $\Cat$ for each $k\geq 2$,
\small
\begin{equation*}
\xymatrix@C=18pt{
&&&& H\sb{k} \ar[llld]_{\nu\sb{1}} \ar[ld]_{\nu\sb{2}} \ar[rrd]^{\nu\sb{k}} &&&& \\
& H\sb{1} \ar[ld]_{d\sb{1}} \ar[rd]^{d\sb{0}} &&
H\sb{1} \ar[ld]_{d\sb{1}} \ar[rd]^{d\sb{0}} && \dotsc &
H\sb{1} \ar[ld]_{d\sb{1}} \ar[rd]^{d\sb{0}} & \\
H\sb{0} && H\sb{0} && H\sb{0} &\dotsc H\sb{0} && H\sb{0}
}
\end{equation*}
\normalsize
where $\nu_j$ is induced by the map $[1]\rw [k]$ sending $0$ to $j-1$ $1$ to $j$. Hence there is a Segal map

\begin{equation*}
  H_k\rw \pro{H_1}{H_0}{k}\;.
\end{equation*}
\begin{definition}\label{segpseudo-def-3}
  The category $\segpsc{}{\Cat}$ is the full subcategory of $\Ps[\Dop,\Cat]$ whose objects $H$ are such that
  \begin{itemize}
    \item [i)] $H_0$ is discrete.\mk

    \item [ii)] All Segal maps are isomorphisms for all $k\geq 2$
    \begin{equation*}
      H_k\cong\pro{H_1}{H_0}{k}\;.
    \end{equation*}

  \end{itemize}
\end{definition}
In \cite{PBook2019} we constructed a functor
\begin{equation}\label{functor_tr2}
  \tr{2}:\tawg{2}\rw\segpsc{}{\Cat}
\end{equation}
by applying transport of structure to $X\in\tawg{2}\subset\funcat{}{\Cat}$ along the equivalence of categories $\zg:X_0\rw X_0^d$, $\Id:X_1\rw X_1$, $\hmu_k:X_k\rw \pro{X_1}{X_0^d}{k}$ for $k\geq 2$. Thus by construction
\begin{equation*}
  (\tr{2}X)_k=
  \left\{
    \begin{array}{ll}
      X_0^d, & k=0 \\
      X_1, & k=1 \\
      \pro{X_1}{X_0^d}{k}, & k>1\;.

    \end{array}
  \right.
\end{equation*}
and there is a pseudo-natural transformation $t_2(X):\tr{2}X\rw X$ which is a levelwise equivalence of categories.

Segalic pseudo-functors and weakly globular double categories are related by the following result, which we will use later.

 \begin{theorem}{\rm{\cite{PBook2019}}}\label{book-st-cat}
 The strictification functor $\St:\Ps\funcat{}{\Cat}\rw\funcat{}{\Cat}$ restricts to a functor
\begin{equation*}
  \St:\segpsc{}{\Cat}\rw\catwg{2}\;.
  \end{equation*}
 \end{theorem}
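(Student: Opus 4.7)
The plan is to verify the three defining conditions of $\catwg{2}$ from Definition \ref{wg-doubcat-def-4} for $\St H$ when $H\in\segpsc{}{\Cat}$, exploiting the explicit description of $\St H$ recalled in Section \ref{sbs-strict-psfun}. Since the bijective-on-objects factor $v_k$ is an isomorphism on objects, the objects of $(\St H)_k$ are pairs $(f\colon[k]\to[r],\, x\in\Obj H_r)$; and since $g_k$ is fully faithful, the hom-sets satisfy $\Hom_{(\St H)_k}((f,x),(f',x')) = \Hom_{H_k}(H(f)(x),H(f')(x'))$.

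First I would verify condition (a), that $(\St H)_0\in\cathd{}$. The component $g_0$ is a fully faithful equivalence onto the discrete category $H_0$, so between any two objects of $(\St H)_0$ there is a unique morphism when their images in $H_0$ agree and none otherwise, and each such morphism is an isomorphism. This is precisely the definition of an equivalence relation, so $(\St H)_0$ is homotopically discrete.

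The main work is condition (b), the Segal isomorphisms. On objects, a compatible sequence $((r_j,f_j,x_j))_j$ in $\pro{(\St H)_1}{(\St H)_0}{k}$ forces the indices $r_j$ and fibers $x_j$ to be constant and $f_j(1)=f_{j+1}(0)$, which glues uniquely to a single $(r,f\colon[k]\to[r],x)$ in $(\St H)_k$. On morphisms, since $H_0$ is discrete, the compatibility condition $d_0\phi_j = d_1\phi_{j+1}$ in $(\St H)_0$ reduces to equality of identity morphisms, which is automatic once the object sequences agree. The Segal condition for $H$ then yields $\Hom_{H_k}(y,y')\cong\prod_j\Hom_{H_1}(H(\nu_j)(y),H(\nu_j)(y'))$ by the same collapse phenomenon, identifying the hom-sets of the pullback with those of $(\St H)_k$ and showing $\muk$ is an isomorphism. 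The delicate point, and the principal obstacle, is that the equality must be strict, not merely up to equivalence: this rests on the pseudo-functorial coherences of $H$ having been rectified in passing to $\St H$, combined with the discreteness of $H_0$ which collapses the remaining compatibility data.

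Finally, for condition (c), I would write $\hmuk$ as the composite of $\muk$ (an isomorphism by (b)) with the map induced by the discretization $\zg\colon(\St H)_0\to(\St H)_0^d$. Under the canonical identification $(\St H)_0^d\cong H_0$ induced by $g_0$, the levelwise equivalence $g\colon\St H\to H$ yields the chain
\begin{equation*}
(\St H)_k \simeq H_k \cong \pro{H_1}{H_0}{k} \simeq \pro{(\St H)_1}{(\St H)_0^d}{k},
\end{equation*}
whose composite is $\hmuk$. The middle isomorphism is the Segal condition for $H$, and the right equivalence uses that pullback over a discrete object preserves pointwise equivalences of categories. Hence $\hmuk$ is an equivalence of categories, completing the verification.
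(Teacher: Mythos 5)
Your proposal is correct in substance and follows the same skeleton as the proof this theorem receives in \cite{PBook2019} (and which this paper reproduces, in the fat-delta setting, as Theorem \ref{fairtowg2The-1}, whose proof is stated to be formally analogous): one verifies the three conditions of Definition \ref{wg-doubcat-def-4} for $L=\St H$ using the factorization $h_k=g_kv_k$ with $v_k$ bijective on objects and $g_k$ fully faithful. Your treatment of a) and c) coincides with the paper's: $L_0$ is fully faithfully equivalent to the discrete $H_0$, hence an equivalence relation, and $\hmuk$ is identified with the chain $\pro{L_1}{L_0}{k}\cong L_k\simeq H_k\cong\pro{H_1}{H_0}{k}\simeq\pro{L_1}{H_0}{k}\cong\pro{L_1}{L_0^d}{k}$, an equivalence because pullbacks over discrete objects preserve levelwise equivalences. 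Where you genuinely diverge is b): the paper argues structurally, first showing (the $\Delta$-analogue of Lemma \ref{fairtowg2Lem-5}) that $(TUH)_k\cong\pro{(TUH)_1}{(TUH)_0}{k}$ with $h_k=(h_1,\ldots,h_1)$, and then invoking essential uniqueness of the (bijective-on-objects, fully faithful) factorization, since $(v_1,\ldots,v_1)$ is bijective on objects and $(g_1,\ldots,g_1)$ is fully faithful; you instead compute objects and hom-sets of $L_k$ explicitly. Your object-level count is the same combinatorial fact in disguise ($\Delta(k,r)$ is the iterated fibre product of $\Delta(1,r)$ over $\Delta(0,r)$), and your claim that the morphism-level compatibility over $L_0$ is automatic is true, but for the right reason note that $L_0$ is only homotopically discrete, so the point is that it has at most one morphism between any two given objects, not that its morphisms are identities. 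The one step you flag but do not discharge is the identification $\Hom_{H_k}(H(f)x,H(f')x')\cong\prod_j\Hom_{H_1}(H(f_j)x,H(f'_j)x')$: this needs conjugation by the coherence isomorphisms $H(\nu_j)H(f)\cong H(f\nu_j)$ of the pseudo-functor, together with a check that the resulting bijections are the ones induced by the strict simplicial operators of $L$; the paper's factorization-system argument is designed precisely to bypass this bookkeeping. With that point filled in, your argument is complete, at the cost of being less transportable (the structural proof is what generalizes verbatim to Theorem \ref{fairtowg2The-1}).
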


 \bk

\nid Since $\ta{2}\subset \tawg{2}$, by composition we obtain a functor 'rigidification'
\begin{equation*}
  Q_2:\ta{2}\xrw{\tr{2}}\segpsc{}{\Cat}\xrw{\St}\catwg{2}\;.
\end{equation*}
In \cite{PBook2019} we also built a functor 'discretization' in the opposite direction
\begin{equation*}
  \Disc{2}:\catwg{2}\rw \ta{2}
\end{equation*}
and we showed that $Q_2$ and $\Disc{2}$ induce an equivalence of categories after localization with respect to the $2$-equivalences. Combining this with the result of \cite{LackPaoli2008} relating $\ta{2}$ to bicategories, we obtained in \cite{PP} a $2$-categorical equivalence between weakly globular double categories and bicategories, showing that $\catwg{2}$ is a suitable model of weak $2$-categories.

\section{The fat delta category}\label{fatdelta_first} In this section we recall from \cite{Kock2006} the  category 'fat delta', denoted $\zuD$ and we discuss the notion of Segal maps for functors from $\zuDop$ to a category with pullbacks. These notions will be used in section \ref{fair2cats} to define fair $2$-categories. The content of this section is essentially contained in \cite{Kock2006}: however, we adopt here a different definition of the fat delta from the one used in \cite{Kock2006}, and we adopt a new notation for the Segal maps.
\subsection{Definition of the fat delta and elementary properties}\label{fatd}

We recall the definition of the fat delta category $\zuD$. This category was introduced in \cite{Kock2006} in terms of coloured semi-ordinals, and an alternative description was stated in \cite{Kock2006} without proof. Since this alterative description is needed for several proofs in this work, we adopt it throughout as our definition of $\zuD$.
\begin{definition}\label{fatd-def1}
The category $\epid$ has for objects the epimorphisms in $\zD$ and for morphisms the commuting squares in $\zD$
\begin{equation}\label{fatd-eq-1}
\begin{gathered}
\xymatrix{[n']\ar^{f}[r]\ar@{->>}_{\eta_1}[d] & [m']\ar@{->>}^{\eta_2}[d]\\
[n]\ar_{g}[r] & [m]
}
\end{gathered}
\end{equation}
where the vertical arrows $\eta_1,\eta_2$ are epimorphisms.
\end{definition}

We now introduce the fat delta category $\zuD$ as a subcategory of $\epid$.
\begin{definition}\label{fatd-def3}
The fat delta category $\zuD$ has for objects the epimorphisms in $\zD$ and for morphisms the commuting squares in $\zD$
\begin{equation}\label{fatd-eq-2}
\begin{gathered}
\xymatrix{[n']\ar@{^(->}^{f}[r]\ar@{->>}_{\eta_1}[d] & [m']\ar@{->>}^{\eta_2}[d]\\
[n]\ar_{g}[r] & [m]
}
\end{gathered}
\end{equation}
where the vertical arrows $\eta_1,\eta_2$ are epimorphisms and the top arrow $f$ is a monomorphism.
\end{definition}

An important role in the theory is played by the projection functor $\pi:\zuD\rw\zD$. This takes the epimorphism $\eta:[n']\rw[n]$ to the target $[n]$ and the morphism in $\zuD$ given by \eqref{fatd-eq-2} to the target morphism $g$.

Denote by $\zD_{mono}$ the wide subcategory of $\zD$ whose morphisms are injective maps, so they are uniquely a composition of face maps $\zve_i:[n-1]\rw[n]$ ($0\leq i\leq n$) where $\zve_i$ is the unique injective map whose image misses $i$.

There is a vertical inclusion
\begin{equation*}
v:\zDm \hookrightarrow \zuD
\end{equation*}
as follows. Given $[n]\in\zDm$, $v([n])$ is the surjection $[n]\twoheadrightarrow[0]$. Given a map $\zve:[n]\hookrightarrow[m]$ in $\zDm$, $v(\zve)$ is the map in $\zuD$
\begin{equation*}
\xymatrix{[n]\ar@{^(->}^{\zve}[r]\ar@{->>}[d] & [m]\ar@{->>}[d]\\
[0]\ar@{=}[r] & [0]
}
\end{equation*}
There is also a horizontal inclusion
\begin{equation*}
h:\zDm \hookrightarrow \zuD\;.
\end{equation*}
Given $[n]\in\zDm$, $h([n])$ is the surjection $\Id:[n]\rw [n]$. Given $\zve:[n]\hookrightarrow[m]$ in $\zDm$, $h(\zve)$ is the map in $\zD$
\begin{equation*}
\xymatrix{[n]\ar@{^(->}^{\zve}[r]\ar_{\Id}[d] & [m]\ar@{->>}^{\Id}[d]\\
[n]\ar_{\zve}[r] & [m]
}
\end{equation*}
We will often identify $h[n]$ with $[n]$ and $h(\zve)$ with $\zve$.

The composite functor $\zDm\overset{h}{\hookrightarrow}\zuD\overset{\pi}\twoheadrightarrow\zD$ is the standard inclusion of $\zDm$ in $\zD$. Thus $\zuD$ can be interpreted as intermediate between $\zDm$ and $\zD$.
\subsection{A different description of the fat delta}\label{diffat}
We discuss a different description of the fat delta category, which was adopted by \cite{Kock2006} as its definition. We will however not use this alternative description of $\zuD$ in the rest of this work.

We can describe the category $\epid$ in terms of coloured ordinals; first recall the following definition, where we adopt the terminology 'coloured category' instead of 'relative category' for consistency with \cite{Kock2006}.
\begin{definition}{\rm{\cite{BarwiKan2012}}}\label{fatd-def2}
A coloured category consists of a pair $(\clC,we\clC)$ where $\clC$ is a category and $we\clC$ is a wide subcategory (that is, a subcategory containing all the objects of $\clC$). Arrows of $we\clC$ are called coloured arrows. A coloured functor $(\clC,we\clC)\rw (\clD,we\clD)$ is a functor $f:\clC\rw\clD$ that preserves coloured arrows.
\end{definition}
Recall that each $[n]\in\zD$ can be considered a category (a pre-order), which is the ordinal $[n]$. An object $\eta:[n']\rw [n]$ of $\epid$ identifies a wide subcategory of the ordinal $[n']$ with non-identity arrows $i\rw j$ (for $0\leq i<j\leq n'$) if $\eta(i)=\eta(j)$. A morphism in $\epid$ as in \eqref{fatd-eq-1} corresponds to a coloured functor since if $\eta_1(i)=\eta_1(j)$ (with $0\leq i<j\leq n'$) then $\eta_2 f(i)=g \eta_1(i)=g\eta_1(j)=\eta_2 f(j)$.

We call the coloured category corresponding to the epimorphism $\eta:[n']\rw [n]$ a coloured ordinal; the coloured arrows are pictured as links, with the dots labelled from bottom to top, as in the following example.
\begin{example}\label{fatd-ex1}
Let $\eta:[2]\rw [1]$ be the epimorphism $\eta(0)=\eta(1)=0$, $\eta(2)=1$. The coloured category corresponding to this epimorphism is the ordinal $[2]$ with the subcategory with objects $0,1,2$ and a unique non-identity arrow from $0$ to $1$ (since $\eta(0)=\eta(1)$). We call this coloured category 'coloured ordinal' and represent it pictorially as follows:
\begin{equation*}
\begin{tikzpicture}
\node (A) at (0,0) {$2$};
\node (B) at (0,-1) {$1$};
\node (C) at (0,-2) {$0$};
\filldraw(1,0) \pcir (1,-1) \pcir -- (1,-2) \pcir;
\end{tikzpicture}
\end{equation*}
\end{example}
In this picture we do not explicitly draw the non-coloured non-identity arrows (that is, the arrows from $0$ to $2$ and from $1$ to $2$, as these arrows are implicitly represented by the ordering of the labelling of the dots.\bk

Thus $\epid$ can be described as the category of finite non-empty coloured ordinals and colour preserving maps. The graphical representation of morphisms of coloured ordinals is as morphisms of usual ordinals for the dots, but a link can be set and not be broken.

We are going to give a different description of $\zuD$ that builds upon the description of $\epid$ as category of non-empty coloured ordinals. We first recall the notion of semi-category.
\begin{definition}\label{fatd-def4}
A semi-category in a category $\clC$ with pullbacks is a diagram in $\clC$
  \begin{equation*}
  \xymatrix{C_1\tiund{C_0}C_1\ar^(0.6){m}[r]& C_1 \ar@<1ex>^{d_0}[r]\ar@<-1ex>_{d_1}[r] & C_0}
  \end{equation*}
satisfying $d_1p_2=d_1 m$, $d_0p_1=d_0 m$, $m(\Id\tiund{C_0}m)=m(m\,\tiund{C_0}\Id)$.
A semi-functor in $\clC$ is a map of diagrams commuting in the obvious way.
\end{definition}
A semi-category is thus like a category without identities. In particular, a finite semi-ordinal is the semi-category associated to a finite total strict order relation $<$. Since $<$ is not reflexive, there are no identity arrows, and all morphisms between semi-ordinals are injective. We can therefore identify the category of finite non-empty semi-ordinals and order preserving maps with $\zDm$, which is the wide subcategory of $\zD$ containing only the monomorphisms. We also have a notion of coloured semi-category:
\begin{definition}\label{fatd-def5}
A coloured semi-category is a pair $(\clC,we\clC)$ where $\clC$ is a semi-category and $we\clC$ is a semi-subcategory  containing all the objects of $\clC$.  A coloured semi-functor $(\clC,we\clC)\rw (\clD,we\clD)$ is a semi-functor $f:\clC\rw\clD$  preserving coloured arrows.
\end{definition}
Given the morphism \eqref{fatd-eq-2} in $\zuD$, since $f$ is a map in $\zDm$, we can think of $[n']$ and $[m']$ as semi-ordinals. The epimorphisms $\eta_1$ and $\eta_2$ then define coloured semi-categories where $i<j$ (for $0\leq i < j\leq n'$) if $\eta_1(i)=\eta_1(j)$ and similarly for $\eta_2$. We call these coloured semi-categories coloured semi-ordinals. The morphism \eqref{fatd-eq-2} corresponds to a coloured semi-functor: that is, a colour-preserving morphism of coloured semi-ordinals.

In summary we can think of $\zuD$ as the category of non-empty coloured semi-ordinals and colour preserving maps. This description helps the intuition, though we will use Definition \ref{fatd-def3} in this work.

\subsection{Segal maps}\label{segal}
In this section we recall Segal maps for objects of $[\zuDop,\clC]$ where $\clC$ is a category with pullbacks. This notion was already in \cite{Kock2006} but is presented differently from \cite{Kock2006} because we adopted a different definition of fat delta. In particular, we introduce the notation of Definition \ref{segal-def1} for objects of $\zuD$.

\begin{definition}\label{segal-def1}
Given an epimorphism $\eta:[n']\rw [n]$ in $\zD$ and $0\leq j\leq n$ we denote by $\eta^{-1}(j)$ the pre-image of $j$, that is $\eta^{-1}(j)=\{0\leq i\leq n'\,|\,\eta(i)=j\}$. Let $0\leq j_1<j_2<\cdots<j_t\leq n$ be such that $|\eta^{-1}(j_i)|>1$ (for $i=1,\ldots,t$), where $|\eta^{-1}(j_i)|$ denotes the size of the set $\eta^{-1}(j_i)$ and let $n_i=|\eta^{-1}(j_i)|-1$.\smallskip

Given $[n],\,[m]$ in $\zD$ we denote by $[m]\dotp[n]$ the pushout in $\zD$
\begin{equation*}
\xymatrix{
[0]\ar[r]^{0} \ar[d]^{m} & [n]\ar[d] &\\
[m]\ar[r] & [m]\dotp[n]=& \!\!\!\!\!\!\!\!\!\!\!\!\![m+n]
}
\end{equation*}
where the map $[0]\xrw{m} [m]$ sends $0$ to $m$ and the map $[0]\xrw{0} [n]$ sends $0$ to $0$.

Given morphisms $\eta:[n']\rw [n]$, $\mu:[m']\rw [m]$ in $\zD$
\begin{equation*}
\eta\dotp\mu:[n']\dotp[m']=[n'+m']\rw[n]\dotp[m]=[n+m]
\end{equation*}
has components $\eta$ and $\mu$.

\begin{remark}\label{fatdels-rem1}
Given the epimorphism $\eta:[n']\rw[n]$ in $\zD$, let $j_i,n_i$ for $i=1,\ldots,t$ be as in Definition \ref{segal-def1}. Then
\begin{equation*}
[n]\cong[j_1]\dotp[j_2-j_1]\dotp[j_3-j_2]\dotp\cdots\dotp[j_t-j_{t-1}]\dotp[n-j_t]\cong [j_1]\dotp[0]\dotp[j_2-j_1]\dotp[0]\dotp\cdots\dotp[0]\dotp[n-j_t]
\end{equation*}
\begin{equation*}
  [n']\cong[j_1]\dotp[n_1]\dotp[j_2-j_1]\dotp[n_2]\dotp\cdots\dotp[n_t]\dotp[n-j_t]\;.
\end{equation*}
Thus $\eta$ can also be written as
\begin{equation}\label{fatdels-eq1}
\eta=\Id_{[j_1]}\dotp v[n_1]\dotp\Id_{[j_2-j_1]}\dotp v[n_2]\dotp\cdots\dotp v[n_t]\dotp\Id_{[n-j_t]}
\end{equation}
where $v[n_i]:[n_i]\rw [0]$ is as in Section \ref{fatd}.
\end{remark}

Thinking of $\eta$ as a coloured semi-ordinal, $j_i$ identifies the position of the links (as the pre-image of $j_i$ under $\eta$ has more than one element) while $n_i$ is the length of the links.

\bk

 To introduce Segal maps for objects of $[\zuDop,\clC]$ we first recall a preliminary notion, which is well known, about simplicial objects $Y\in[\Dop,\clC]$.
\begin{remark}\label{segal-rem1}
Let $[k]\in\zD$ and $k=k_1+\cdots +k_s$ with $0<k_i<k$. The following diagram commutes in $\zD$:
\begin{equation*}
\xymatrix@C=20pt{
&&& [k] \\
& [k_1] \ar^{\nu_1}[urr] && [k_2] \ar_{\nu_2}[u] && [k_3]\ar_{\nu_3}[ull] && \ & \cdots& [k_s]\ar_{\nu_s}[ullllll]\\
[0]\ar^{\zs_{k_1}}[ur] && [0]\ar_{\zs_0}[ul]\ar^{\zs_{k_2}}[ur] && [0]\ar_{\zs_0}[ul]\ar^{\zs_{k_3}}[ur] && [0]\ar[ul]\ar[ur] & \cdots & [0]\ar[ur] && [0]\ar[ul]^{\zs_0}
}
\end{equation*}
where $\zs_{k_i}(0)=0$, $\zs_0(0)=k_i$, $\nu_1(j)=j$ for $j=0,\ldots,k_1$ and for $1\leq i\leq s$ $\nu_i(j)=k_1+\cdots+k_{i-1}+j$ for $j=0,\ldots,k_i$.
\end{remark}
Thus given a simplicial object $Y\in[\Dop,\clC]$ there is a commuting diagram in $\clC$:
\begin{equation*}
\xymatrix{
&&&& Y_k \ar[dlll]\ar[drrr]\ar[dl]\\
& Y_{k_1}\ar[dl]\ar[dr] && Y_{k_2}\ar[dl]\ar[dr] && \cdots && Y_{k_s}\ar[dl]\ar[dr]\\
Y_0 && Y_0 && Y_0 & \cdots & Y_0 && Y_0
}
\end{equation*}
and a corresponding generalized Segal map
\begin{equation*}
  Y_k \rw Y_{k_1}\tiund{Y_0}Y_{k_2}\tiund{Y_0}\cdots \tiund{Y_0}Y_{k_s}\;.
\end{equation*}
When $k_i=1, \; i=1,\ldots,s$ this coincides with the Segal map of definition \ref{wg-doubcat-def-1}.

We will use this to define Segal maps for $X\in[\zuDop,\clC]$. Such maps have source $X_{\eta}$ where $\eta:[n']\rw [n]$ is an object of $\zuDop$, that is an epimorphism in $\zD$.

Given $X\in[\zuDop,\clC]$ and $\eta\in\zuDop$ as above, we define the Segal map with source $X_{\eta}$ in three steps:
\begin{itemize}
  \item [a)] Let $\eta=h[k]$ with $k\geq 2$ where $h:\zDm \rw \zD$ as in Section \ref{fatd}. Noting that the maps $[1]\rw [k]$ and $[0]\rw [1]$ in Definition \ref{wg-doubcat-def-1} are all maps in $\zDm$, we obtain a unique Segal map
\begin{equation}\label{segal-eq2}
\mu_{h[k]}: X_{h[k]}\rw\pro{X_{h[1]}}{X_{h[0]}}{k}\;.
\end{equation}
Using our convention of identifying $h[k]$ with $[k]$, we write the Segal map simply as
\begin{equation}\label{segal-eq3}
\mu_{k}:X_k\rw \pro{X_1}{X_0}{k}\;.
\end{equation}

  \item [b)] Let $\eta=v[k]$ with $k\geq 2$ where $v:\zDm\rw \zuD$ as in Section \ref{fatd}. There is a commuting diagram in $\clC$
\begin{equation*}
\resizebox{0.85\textwidth}{!}{
\xymatrix{
&&&& X_{v[k]} \ar[dlll]_{\tilde{\nu}_1}\ar[dl]^{\tilde{\nu}_2}\ar[drrr]^{\tilde{\nu}_k} \\
& X_{v[1]}\ar[dl]_{\tilde{d}_1}\ar[dr]^{\tilde{d}_0} && X_{v[1]}\ar[dl]_{\tilde{d}_1}\ar[dr]^{\tilde{d}_0} && \cdots && X_{v[1]}\ar[dl]_{\tilde{d}_1}\ar[dr]^{\tilde{d}_0}\\
X_0 && X_0 && X_0 & \cdots & X_0 && X_0
}}
\end{equation*}
where $\tilde{\nu}_i$, $\tilde{d}_i$, $i=0,1$ correspond to the maps in $\zuD$ given by
\begin{equation*}
\xymatrix{[1]\ar@{^(->}^{\nu_i}[r]\ar@{->>}[d] & [k]\ar@{->>}[d]\\
[0]\ar@{=}[r] & [0]
}
\qquad\qquad
\xymatrix{[0]\ar@{^(->}^{\zs_i}[r]\ar_{\Id}[d] & [1]\ar@{->>}[d]\\
[0]\ar@{=}[r] & [0]
}
\end{equation*}
(with $\nu_i,\;\zs_i$ as in Remark \ref{segal-rem1}) we therefore obtain a Segal map
\begin{equation}\label{segal-eq4}
\mu_{v[k]}:X_{v[k]}\rw \pro{X_{v[1]}}{X_0}{k}\;.
\end{equation}

  \item [c)] We use a) and b) above to tackle the general case of $\eta:[n']\rw [n]$ in $\zuDop$. Let $j_i,n_i$ for $i=1,\ldots,t$ be as in Definition \ref{segal-def1}. Since
\begin{equation*}
\begin{split}
  n' & = j_1+n_1+(j_2-j_1)+n_2+(j_3-j_2)+n_3+\cdots +n_t+(n'-j_t) \\
  n  & = j_1+(j_2-j_1)+(j_3-j_2)+\cdots +(n-j_t)
\end{split}
\end{equation*}
by Remark \ref{segal-rem1}, there are commuting diagrams in $\zD$
\begin{equation*}
\resizebox{0.85\textwidth}{!}{
\xymatrix@C=15pt{
&&&&& [n']\\
& [j_1]\ar[urrrr]^{\nu_{j_1}} && [n_1]\ar[urr]_{\nu_{n_1}} && [j_2-j_1]\ar[u]_{\nu_{j_2-j_1}} && \  & [n_t]\ar[ulll]^{\nu_{n_t}} && [n-j_t]\ar[ulllll]_{\nu_{n-j_t}}\\
[0]\ar[ur]^{\zs_{j_1}} && [0]\ar[ur]^{\zs_{n_1}}\ar[ul]_{\zs_0} && [0]\ar[ur]^{\zs_{j_2-j_1}}\ar[ul]_{\zs_0} && [0]\ar[ur]\ar[ul]^{\zs_0} && \cdots & [0]\ar[ur]\ar[ul] & & [0]\ar[ul]
}}
\end{equation*}
\begin{equation*}
\resizebox{0.85\textwidth}{!}{
\xymatrix@C=15pt{
&&&&& [n]\\
& [j_1]\ar[urrrr]^{\tilde{\nu}_{j_1}} && [j_2-j_1]\ar[urr]_{\tilde{\nu}_{j_2-j_1}} && [j_3-j_2]\ar[u]_{\tilde{\nu}_{j_3-j_2}} && \  & && [n-j_t]\ar[ulllll]_{\tilde{\nu}_{n-j_t}}\\
[0]\ar[ur]^{\zs_{j_1}} && [0]\ar[ur]^{\zs_{j_2-j_1}}\ar[ul]_{\zs_0} && [0]\ar[ur]\ar[ul]_{\zs_0} && [0]\ar[ul] && \cdots & [0]\ar[ur] & & [0]\ar[ul]
}}
\end{equation*}

\nid and a corresponding commuting diagram in $\zuD$:
\begin{equation*}
\resizebox{0.85\textwidth}{!}{
\xymatrix@C=15pt{
&&&&& [\eta]\\
& [j_1]\ar[urrrr] && v[n_1]\ar[urr] && [j_2-j_1]\ar[u] && v[n_2]\ar[ull] & \  && v[n_t]\ar[ulllll] && [n-j_t]\ar[ulllllll]\\
[0]\ar[ur] && [0]\ar[ur]\ar[ul] && [0]\ar[ur]\ar[ul] && [0]\ar[ur]\ar[ul] &&  [0]\ar[ul] & &\cdots & [0]\ar[ul]\ar[ur]
}}
\end{equation*}
where the maps $[j_1]\rw \eta$, $[j_i-ji-1]\rw \eta$ and $v[n_i]\rw \eta$ are given by
\begin{equation*}
\xymatrix{
[j_1]\ar[r]^{\nu_{j_1}}\ar[d]_{\Id} & [n']\ar[d]^{\eta}\\
[j_1] \ar[r]_{\tilde{\nu}_{j_1}} & [n]
}
\qquad\quad
\xymatrix@C=45pt{
[j_i-j_{i-1}]\ar[r]^(0.6){\nu_{j_i-j_{i-1}}}\ar[d]_{\Id} & [n']\ar[d]^{\eta}\\
[j_i-j_{i-1}] \ar[r]_(0.6){\nu'_{j_i-j_{i-1}}} & [n]
}
\qquad\quad
\xymatrix{
[n_i]\ar[r]^{\nu_{n_i}}\ar[d] & [n']\ar[d]^{\eta}\\
[0] \ar[r]_{d_{j_i}} & [n]
}
\end{equation*}
Given $X\in[\zuDop,\clC]$ we therefore obtain a commuting diagram in $\clC$:
\begin{equation*}
\resizebox{0.85\textwidth}{!}{
\xymatrix@R=40pt@C=15pt{
&&&&& X_{\eta}\ar[dllll]\ar[dll]\ar[d]\ar[drr]\ar[drrrrr]\ar[drrrrrrr]\\
& X_{j_1}\ar[dl]\ar[dr] && X_{v[n_1]}\ar[dl]\ar[dr] && X_{j_2-j_1}\ar[dl]\ar[dr] && X_{v[n_2]}\ar[dl]\ar[dr] && \  & X_{v[n_t]}\ar[dl]\ar[dr] && X_{n-j_t}\ar[dl]\\
 X_0 && X_0 && X_0 && X_0 && X_0  &\cdots X_0  && X_0
}}
\end{equation*}
and therefore a Segal map
\begin{equation}\label{segal-eq5}
X_{\eta}\rw X_{j_1}\tiund{X_0}X_{v[n_1]}\tiund{X_0}X_{j_2-j_1}\tiund{X_0}\cdots\tiund{X_0}X_{v[n_t]} \tiund{X_0} X_{n-j_t}
\end{equation}
Combining \eqref{segal-eq5} with \eqref{segal-eq3} and \eqref{segal-eq4} we finally obtain the Segal map
\begin{equation}\label{segal-eq5.1}
\begin{gathered}
\begin{split}
    X_{\eta}\rw &(\pro{X_1}{X_0}{j_1})\tiund{X_0}(\pro{X_{v[1]}}{X_0}{n_1})\tiund{X_0}\\
    &(\pro{X_1}{X_0}{j_2-j_1}) \tiund{X_0} \cdots\tiund{X_0}(\pro{X_{v[1]}}{X_0}{n_t})\\
    &\tiund{X_0}(\pro{X_1}{X_0}{n-j_t})\;.
\end{split}
\end{gathered}
\end{equation}

\end{itemize}

\end{definition}

\section{Fair $2$-categories}\label{fa2cat}

\subsection{Fair $2$-categories}\label{fair2cats}
In defining fair 2-categories we consider $\zuD$ as a coloured category in which the coloured arrows are the ones sent to identities by $\pi:\zuD\rw\zD$, while $\Cat$ is a coloured category in which the coloured arrows are the equivalences of categories.
\begin{definition}\cite{Kock2006}\label{fatdeltaDef-1}
A fair 2-category is a colour-preserving functor $X:\zuDop\rw\Cat$ such that $X_0$ is a discrete category and all the Segal maps \eqref{segal-eq5.1} are isomorphisms. We denote by $\fair{2}$ the category of fair 2-categories.
\end{definition}
Given $X\in\fair{2}$ we sometimes denote $X_0=\clO$, $X_1=\clA$, $X_{v[1]}=\clU$ and call them categories of objects, arrows and weak units respectively. It is shown in \cite{Kock2006} that the two functors $\clU\rightrightarrows\clO$ coincide.
\begin{remark}\label{fair2catRem-1}
  As observed in \cite{Kock2006}*{\S 3.1} to give a fair $2$-category $X$ it is enough to give the following data:
\begin{itemize}
  \item [a)] A discrete category of objects $\clO=X_{0}$, a category of arrows $\clA=X_{1}$ and a category of weak units $\clU=X_{v[1]}$ together with a serially commuting diagram
\begin{equation*}
 \xymatrix@R=40pt @C=40pt{
\clO  &  \clA \ar@<-0.5ex>_{t}[l] \ar@<0.5ex>^{s}[l] \\
\clU \ar@<-0.5ex>[u]_{t'} \ar@<0.5ex>[u]^{s'} \ar_{u}[ur]
}
\end{equation*}
$su=s',\; tu=t'$.\mk

  \item [b)] Semi-category structures (internal to $\Cat$)

  \nid $\xymatrix{\clU\tiund{\clO}\clU\ar[r]&\clU\ar@<-0.5ex>[r] \ar@<0.5ex>[r] & \clO}$ and $\xymatrix{\clA\tiund{\clO}\clA\ar[r]&\clA\ar@<-0.5ex>[r] \ar@<0.5ex>[r] & \clO}$ such that
\begin{equation*}
\xymatrix@R=40pt @C=40pt{
\clU\tiund{\clO}\clU\ar[r]\ar[d] &\clU \ar@<-0.5ex>[r] \ar@<0.5ex>[r] \ar_{u}[d] & \clO \ar@{=}[d] \\
\clA\tiund{\clO}\clA\ar[r]&\clA \ar@<-0.5ex>[r] \ar@<0.5ex>[r] & \clO
}
\end{equation*}
is a semi-functor internal to $\Cat$.\mk

  \item [c)] The maps $\xymatrix{\clU \ar@<-0.5ex>[r] \ar@<0.5ex>[r] & \clO}$ as well as the composition maps
\begin{equation*}
  \clU\tiund{\clO}\clA\rw\clA \lw \clA \tiund{\clO}\clU,\qquad \clU\tiund{\clO}\clU\rw\clU
\end{equation*}
are equivalences of categories. These maps are induced by the maps in $\zuD$ given by
\begin{equation}\label{fatdelta-eq1}
\begin{gathered}
\xymatrix{
[0]\ar@{^(->}^{\zs_i}[r]\ar[d]_{\Id} & [1]\ar@{->>}[d]\\
[0]\ar@{=}[r] & [0]
}
\quad
\xymatrix{
[1]\ar@{^(->}^{\zve}[r]\ar[d]_{\Id} & [2]\ar@{->>}^{\eta_1}[d]\\
[1]\ar@{=}_{\Id}[r] & [1]
}
\quad
\xymatrix{
[1]\ar@{^(->}^{\zve}[r]\ar[d]_{Id} & [2]\ar@{->>}^{\eta_2}[d]\\
[1]\ar@{=}_{\Id}[r] & [1]
}
\quad
\xymatrix{
[1]\ar@{^(->}^{\zve}[r]\ar@{->>}[d] & [2]\ar@{->>}[d]\\
[0]\ar@{=}[r] & [0]
}
\end{gathered}
\end{equation}
where $\zve(0)=0,\:\zve(1)=2,\:\eta_1(0)=\eta_1(1)=0, \:\eta_1(2)=1,\:\eta_2(0)=0,\:\eta_2(1)=\eta_2(2)=1,\: \zs_0(0)=0,\: \zs_{1}(0)=1$.

\end{itemize}
\mk

The rest of the diagram can be constructed from a) and b). Further, as observed in \cite{Kock2006}, the maps \eqref{fatdelta-eq1} in $\zuD$ generate all coloured arrows in $\zuD$.
By the Segal condition and the fact that equivalences of categories are stable under pullbacks over discrete objects, requiring the five maps in c) are equivalences of categories is equivalent to requiring that every coloured map in $\zuD$ is sent to an equivalence of categories.

We also observe that to give a morphism $f:X\rw Y$ in $\fair{2}$ is equivalent to give semi-functors
\begin{equation*}
\xymatrix@R=40pt @C=40pt{
\clA\tiund{\clO}\clA\ar[r]\ar[d] & \clA \ar@<-0.5ex>[r] \ar@<0.5ex>[r] \ar[d] & \clO \ar[d] \\
\clA'\tiund{\clO}\clA'\ar[r] &\clA' \ar@<-0.5ex>[r] \ar@<0.5ex>[r] & \clO'
}
\qquad \text{and}\qquad
\xymatrix@R=40pt @C=40pt{
\clU\tiund{\clO}\clU\ar[r]\ar[d] &\clU \ar@<-0.5ex>[r] \ar@<0.5ex>[r] \ar[d] & \clO \ar[d] \\
\clU'\tiund{\clO'}\clU'\ar[r] & \clU' \ar@<-0.5ex>[r] \ar@<0.5ex>[r] & \clO'
}
\end{equation*}
making the following diagrams commute:
\begin{equation*}
\xymatrix@R=30pt @C=40pt{
{\dblar{\clA}{\clO}} \ar[r] & {\dblar{\clA'}{\clO'}}\\
{\dblar{\clU}{\clO}} \ar[r] \ar[u] & {\dblar{\clU'}{\clO'}}\ar[u]
}
\qquad
\xymatrix@R=30pt @C=40pt{
(\clA\tiund{\clO}\clA\rw\clA) \ar[r]\ar[d] & (\clA'\tiund{\clO'}\clA'\rw\clA')\ar[d]\\
(\clU\tiund{\clO}\clU\rw\clU) \ar[r] & (\clU'\tiund{\clO'}\clU'\rw\clU')\\
}
\end{equation*}
\end{remark}
\begin{lemma}\label{level_fwg_0}
There is a truncation functor
\begin{equation*}
\p{1}:\fair{2}\rw \Cat
\end{equation*}
with $(\p{1}X)_0=pX_0$, $(\p{1}X)_1=pX_1$ where $p:\Cat\rw \Set$ is the isomorphism classes of object functor and we identified $[0]=h[0]$, $[1]=h[1]$.
\begin{proof}
Denote $\clO=X_{0}$,\, $\clA=X_{1}$,\, $\clU=X_{v[1]}$ and let $t,s:\clA\rw\clO$, $u:\clU\rw\clA$ be the structure maps. Since $p$ commutes with pullbacks over discrete objects, $p(\clA\tiund{\clO}\clA)=p\clA\tiund{p\clO}p\clA=p\clA\tiund{\clO}p\clA$. By Remark \ref{fair2catRem-1} there is a semi-category structure internal to $\Cat$
\begin{equation*}
\xymatrix{
 \clA\tiund{\clO}\clA \ar[r]^(0.6)c & \clA \ar@<0.5ex>[r]^t \ar@<-0.5ex>[r]_s & \clO\;.
}
\end{equation*}
Therefore we obtain a semi-category
\begin{equation*}
\xymatrix{
p(\clA\tiund{\clO}\clA)=p\clA\tiund{\clO}p\clA \ar[r]^(0.8){pc} & p\clA \ar@<0.5ex>[r]^{pt} \ar@<-0.5ex>[r]_{ps} & p\clO =\clO\;.
}
\end{equation*}
By Remark \ref{fair2catRem-1} there is a serially commuting diagram in $\Cat$
\begin{equation*}
\xymatrix{
\clO & \clA \ar@<-0.5ex>[l]_t \ar@<0.5ex>[l]^s\\
\clU \ar@<-0.5ex>[u] \ar@<0.5ex>[u] \ar[ru]_u
}
\end{equation*}
where the two (equal) maps $\clU\rightrightarrows \clO$ are equivalences of categories. Therefore we have a map
\begin{equation*}
  pu:\clO\cong p\clU\rw p\clA
\end{equation*}
with $(ps)(pu)=\Id=(pt)(pu)$. By Remark \ref{fair2catRem-1} the composition maps
\begin{equation*}
  \clU\tiund{\clO}\clA \rw \clA \lw \clA\tiund{\clO}\clU
\end{equation*}
are equivalences of categories. Thus, since $p$ commutes with pullbacks over discrete objects, we obtain isomorphisms
\begin{equation*}
  p\clU\tiund{\clO}p\clA=p(\clU\tiund{\clO}\clA)\cong p\clA\cong p(\clA\tiund{\clO}\clU)=p\clA\tiund{\clO}p\clU\;.
\end{equation*}
That is
\begin{equation*}
  pc(\Id_{\clA},(pu)(ps))=\Id_{p\clA}=pc((pu)(pt),\Id_{\clA})\;.
\end{equation*}
In conclusion the maps
\begin{equation*}
\labelmargin-{2pt}
\xymatrix{
(p\clA\tiund{\clO} p\clA) \ar[r]^(0.7){pc} & p\clA \ar@<2ex>[r]^{ps} \ar[r]_{pt} & \clO \ar@<2.5ex>[l]^{pu}
}
\end{equation*}
satisfy the axioms of a category.
\end{proof}

\end{lemma}
\begin{definition}\label{def2eqfr}
Given $a,b\in X_0$, let $X(a,b)$ be the fiber at $(a,b)$ of the map
\begin{equation*}
  X_1\xrw{(\pt_0,\pt_1)}X_0\times X_0\;.
\end{equation*}
A morphism $f:X\rw Y$ in $\fair{2}$ is a $2$-equivalence if
\begin{itemize}
  \item [(i)] For all $a,b\in X_0$, $f_{(a,b)}:X(a,b)\rw Y(fa,fb)$ is an equivalence of categories.

  \item [(ii)] $\p{1}f$ is an equivalence of categories.
\end{itemize}
\end{definition}
\begin{lemma}\label{level_fwg}
Let $F:X\rw Y$ be a morphism in $\fair{2}$ which is levelwise equivalence of categories (i.e. $F_{\eta}$ is an equivalence of categories for all $\eta\in\zuDop$). Then $F$ is a $2$-equivalence.
\end{lemma}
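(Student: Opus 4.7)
The plan is to verify the two conditions (i) and (ii) of Definition \ref{def2eqfr} separately, using the fact that $F$ is levelwise an equivalence of categories together with the discreteness of $X_0$ and $Y_0$ (which holds by the first axiom of a fair $2$-category, cf. Remark \ref{fair2catRem-0}(1)).

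For condition (ii), recall that $(\p{1}Z)_n = p(Z_n)$ where $p\colon\Cat\rw\Set$ is the isomorphism classes of objects functor. Since $p$ sends equivalences of categories to bijections of sets, the levelwise hypothesis gives that $(\p{1}F)_n = p(F_n)$ is a bijection for every $n$. Hence $\p{1}F$ is a levelwise bijection of nerves, so it is an isomorphism of categories and in particular an equivalence.

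For condition (i), fix $a,b\in X_0$ and write $f_{(a,b)}\colon X(a,b)\rw Y(Fa,Fb)$ for the induced functor between fibers of $(\pt_0,\pt_1)$. I would check fully faithfulness and essential surjectivity by hand, using the naturality square
\begin{equation*}
\xymatrix{
X_1 \ar[r]^{F_1} \ar[d]_{(\pt_0,\pt_1)} & Y_1 \ar[d]^{(\pt_0,\pt_1)}\\
X_0\times X_0 \ar[r]_{F_0\times F_0} & Y_0\times Y_0
}
\end{equation*}
together with the fact that, since $X_0$ and $Y_0$ are discrete and $F_0$ is an equivalence of discrete categories, $F_0$ is a bijection. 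For full faithfulness, given $x,y\in X(a,b)$, a morphism in $X(a,b)$ is a morphism $x\rw y$ in $X_1$ mapping to $\id_{(a,b)}$ under $(\pt_0,\pt_1)$; using that $F_1$ is fully faithful, together with the commutativity of the square above and the injectivity of $F_0\times F_0$ on arrows, one gets a bijection of such morphism sets. For essential surjectivity, take $z\in Y(Fa,Fb)$; by essential surjectivity of $F_1$ there is $x\in X_1$ with an iso $g\colon F_1 x\cong z$ in $Y_1$, and because $Y_0\times Y_0$ is discrete the source/target values are preserved by $g$, giving $(\pt_0,\pt_1)(F_1 x)=(Fa,Fb)$. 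Bijectivity of $F_0$ then forces $(\pt_0,\pt_1)(x)=(a,b)$, so $x\in X(a,b)$, and since identities in a discrete category lift uniquely, $g$ lies in $Y(Fa,Fb)$.

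There is no real obstacle: the statement is essentially the observation that equivalences of categories restrict to equivalences on fibers over discrete bases. The only point requiring care is the interplay between the two levels $X_0$ and $X_1$ -- namely, using the discreteness of $X_0$ and $Y_0$ to conclude that $F_0$ is a bijection (not merely essentially surjective and fully faithful), which is what allows the fiberwise argument to go through cleanly.
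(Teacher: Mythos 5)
Your proof is correct and follows essentially the same route as the paper's: both arguments rest on the observation that discreteness of $X_0$ and $Y_0$ forces $F_0$ to be a bijection, and on the fact that $p$ turns the levelwise equivalences into bijections for condition (ii). The paper merely packages your fiberwise full-faithfulness/essential-surjectivity check more compactly, writing $X_1=\coprod_{a,b}X(a,b)$ and $Y_1=\coprod_{a',b'}Y(a',b')$ and noting that an equivalence respecting these coproduct decompositions over the bijection $F_0$ must restrict to an equivalence on each component.
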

\begin{proof}
Since $X_0,Y_0$ are discrete, $F_0$ is an isomorphism. Thus
\begin{equation*}
  Y_1=\uset{a',b'\in Y_0}{\cop} Y(a',b')\cong \uset{\parbox{12mm}{\centering $\Sc{Fa,Fb}$\\$\Sc{a,b\in X_0}$}}{\cop} Y(Fa,Fb)\;.
\end{equation*}
Since $\ds X_1=\uset{a,b\in X_0}{\cop}X(a,b)$ and $F_1$ is an equivalence of categories, it follows that $X(a,b)\rw Y(Fa,Fb)$ is an equivalence of categories for all $a,b\in X_0$. Also, since $F_n$ is an equivalence of categories for all $[n]\in\zDopm$, $pF_n=(\p{1}F)_n$ is a bijection. Therefore $\p{1}F$ is an isomorphism. By definition, it follows that $F$ is a $2$-equivalence.

\end{proof}


\section{Further properties of the fat delta}\label{fatdels}
In this section we establish some new properties of the fat delta category $\zuD$ that will be used in Section \ref{fairtowg2}. The main results of this section, Proposition \ref{fatdels-pro00} and Proposition \ref{fatdels-pro1}, will be used in the proof of Theorem \ref{fairtowg2-the1}.

\begin{lemma}\label{fatdels-lem1}
Let $\eta:[n_i]\rw [n]$ be an epimorphism in $\zD$ and let $j_t,t_i$ $i=1,\ldots,t$ be as in Definition \ref{segal-def1}. Denote, for each $j\in\{j_1,j_2-j_1,j_3-j_2,\ldots,j_t-j_{t-1},n-j_t\}$, $n\in\{n_1,n_2,\ldots,n_t\}$ and $\mu\in\zuD$:
\begin{equation*}
\begin{split}
    & \zuD([1],\mu)^j=\pro{\zuD([1],\mu)}{\zuD([0],\mu)}{j} \\
    & \zuD(v[1],\mu)^n=\pro{\zuD(v[1],\mu)}{\zuD([0],\mu)}{n}
\end{split}
\end{equation*}
Then there is a bijection
\begin{equation}\label{fatdels-eq2}
\begin{gathered}
\begin{split}
& \zuD(\eta,\mu)\cong \\ &\zuD([1],\mu)^{j_1}\tiund{\zuD([0],\mu)}\zuD(v[1],\mu)^{n_1}\tiund{\zuD([0],\mu)} \zuD([1],\mu)^{j_2-j_1}\tiund{\zuD([0],\mu)}\cdots \tiund{\zuD([0],\mu)} \zuD([1],\mu)^{n-j_t}\;.
\end{split}
\end{gathered}
\end{equation}
\end{lemma}
\begin{proof}
For each $[n], [m]$ in $\zD$ there is a pushout in $\zuD$
\begin{equation*}
\xymatrix{
[0]\ar[r]^{\ovl{0}} \ar[d]_{\ovl{m}} & v[n]\ar[d] &\\
v[m]\ar[r] & v[m]\dotp v[n]=& \!\!\!\!\!\!\!\!\!\!\!\!\!v[m+n]
}
\end{equation*}
where the maps $\ovl{m}$ and $\ovl{0}$ are
\begin{equation*}
\xymatrix{
[0]\ar[r]^{m}\ar@{=}[d] & [m]\ar^{v[m]}[d]\\
[0]\ar@{=}[r] & [0]
}
\qquad\qquad
\xymatrix{
[0]\ar[r]^{0}\ar@{=}[d] & [n]\ar^{v[n]}[d]\\
[0]\ar@{=}[r] & [0]
}
\end{equation*}
where $[0]\xrw{m}[m]$ sends $0$ to $m$ and  $[0]\xrw{0}[n]$ sends $0$ to $0$.
Since, for each $[j]\in\zDm$, $[j]=[1]\dotp\oset{j}{\cdots}\dotp[1]$, it follows by \eqref{fatdels-eq1} that
\begin{equation*}
\begin{split}
  \eta & \cong [j_1]\dotp v[n_1]\dotp[j_2-j_1]\dotp v[n_2]\dotp\cdots\dotp[n-j_t]\cong \\
    & \cong([1]\dotp\oset{j_1}{\cdots}\dotp[1])\dotp(v[1]\dotp\oset{n_1}{\cdots}\dotp v[1])\dotp\cdots([1]\dotp\oset{n-j_t}{\cdots}\dotp[1])\:.
\end{split}
\end{equation*}
Since $\zuD(\mi,\mu)$ sends pushout to pullbacks, \eqref{fatdels-eq2} follows.
\end{proof}
\begin{lemma}\label{fatdels-lem2}
Let $\uf:\mu_1\rw\mu_2$ be the following map in $\zuD$
\begin{equation*}
\xymatrix{[m']\ar@{^(->}^{f'}[r]\ar@{->>}_{\mu_1}[d] & [n']\ar@{->>}^{\mu_2}[d]\\
[m]\ar_{f}[r] & [n]
}
\end{equation*}
Let $\pi\uf=f=\zve\eta$ be the epi-mono factorization of $f$ in $\zD$ . Then there are maps $\ueta$ and $\uzve$ in $\zuD$ with $\pi(\uzve)=\zve$, $\pi(\ueta)=\eta$ and $\uf=\uzve\ueta$.
\end{lemma}
\begin{proof}
If $\eta:[m]\rw[r]$ and $\zve:[r]\rw[n]$ let $[r']$ be the pullback in $\zD$
\begin{equation*}
\xymatrix{[r']\ar@{^(->}^{i_2}[r]\ar@{->>}_{\mu}[d] & [n']\ar@{->>}^{\mu_2}[d]\\
[r]\ar_{\zve}[r] & [n]
}
\end{equation*}
That is, $[r']$ is the full subcategory of the ordinal $[n']$ with objects $i$ such that $\mu_2(i)\in \zve[r]$. Since $\mu_2 f'=f\mu_1$ there is $i_1:[m']\rw[r']$ making the following diagram commute:
\begin{equation*}
\xymatrix{
[m'] \ar@/_/[ddr]_{\eta\mu_1} \ar@/^/[drr]^{f'}
\ar[dr]^{i_1} \\
& [r'] \ar[d]_{\mu} \ar@{^{(}->}[r]^{i_2}
& [n'] \ar[d]^{\mu_2} \\
& [r] \ar@{^{(}->}[r]_{\zve} & [n]
}
\end{equation*}
Since $i_2$ and $f'$ are monos, such is $i_1$. In conclusion, we have maps $\ueta$ and $\uzve$ in $\zuD$
\begin{equation*}
\xymatrix{
[m']\ar@{^{(}->}[r]^{i_1}\ar@{->>}_{\mu_1}[d] & [r']\ar@{^{(}->}[r]^{i_2}\ar^{\mu}[d]& [n'] \ar@{->>}^{\mu_2}[d]\\
[m]\ar_{\eta}[r] & [r]\ar@{^{(}->}_{\zve}[r]& [n]
}
\end{equation*}
with the required properties.
\end{proof}
\begin{remark}\label{fatdels-rem2}
Given an epimorphism $\eta:[n']\rw[n]$ in $\zD$, there is a map $\ueta:[n']\rw\eta$ in $\zuD$ given by
\begin{equation*}
\xymatrix{
[n']\ar^{\Id}[r]\ar_{\Id}[d] & [n']\ar@{->>}^{\eta}[d]\\
[n']\ar_{\eta}[r] & [n]
}
\end{equation*}
with $\pi(\ueta)=\eta$.
\end{remark}
The following lemma will be used in the proof of Proposition \ref{fatdels-pro00}.

\begin{lemma}\label{fatdels-lem3}
Given an epimorphism $\eta:[n']\rw[n]$ and a monomorphism $\zve:[n]\rw [m]$ in $\zD$, there is an epimorphism $\eta':[m']\rw[m]$ and a map in $\zuD$
\begin{equation*}
\xymatrix{
[n']\ar^{\zve'}[r]\ar_{\eta}[d] & [m']\ar^{\eta'}[d]\\
[n]\ar_{\zve}[r] & [m]
}
\end{equation*}
with $m'=n'+m-n$.
\end{lemma}
\begin{proof}
By Remark \ref{fatdels-rem1}, we have
\begin{equation*}
\begin{split}
  [n] & = [j_1]\dotp[j_2-j_1]\dotp[j_3-j_2]\dotp\cdots\dotp[n-j_t] \\
  [n'] & =[j_1]\dotp[n_1]\dotp[j_2-j_1]\dotp[n_2]\dotp\cdots\dotp[n_t]\dotp[n-j_t]\\
  [m] & = [\zve(j_1)]\dotp[\zve(j_2)-\zve(j_1)]\dotp[\zve(j_3)-\zve(j_2)]\dotp\cdots \dotp[m-\zve(j_t)]\;.
\end{split}
\end{equation*}
Given $0\leq i\leq n$ the monomorphism $\zve$ restricts to monomorphisms \mk

$\zve_{0 j_1}:[j_1]\rw[\zve(j_1)]$,\; $\zve_{j_rj_{r+1}}:[j_{r+1}-j_r]\rw[\zve(j_{r+1})-\zve(j_r)]$,\, $r=1,\ldots,t-1$,

$\zve_{j_t n}:[n-j_t]\rw[m-\zve(j_t)]$.\mk

Thus
\begin{equation*}
\zve\cong\zve_{0j_1}\dotp\zve_{j_1j_2}\dotp\zve_{j_2j_3}\dotp\cdots\dotp\zve_{j_t n}\;.
\end{equation*}
Let
\begin{equation*}
\eta'=\Id_{[\zve(j_1)]}\dotp v[n_1]\dotp\Id_{[\zve(j_2)-\zve(j_1)]}\dotp v[n_2]\dotp\cdots\dotp v[n_t]\dotp\Id_{m-\zve(j_t)}
\end{equation*}
and let $\zve'$ be the monomorphism
\begin{equation*}
\zve'=\zve_{0j_1}\dotp\Id_{[n_1]}\dotp\zve_{j_1j_2}\dotp\Id_{[n_2]}\dotp\cdots\dotp\zve_{j_tn}\;.
\end{equation*}
It follows by \eqref{fatdels-eq1} that
\begin{equation*}
\zve \eta=\zve_{0j_1}\dotp v[n_1]\dotp\zve_{j_1j_2}\dotp v[n_2]\dotp\cdots\dotp v[n_t]\dotp\zve_{j_t n}=\zve'\eta'\;.
\end{equation*}
\end{proof}
\begin{proposition}\label{fatdels-pro00}
The map of simplicial sets $N\pi:N\zuD\rw N\zD$ is levelwise surjective.
\end{proposition}
\begin{proof}
$(N\pi)_0$ is surjective since $\Id_{[n]}$ is an object of $\zuD$ for every $[n]\in \zD$. We now show that $(N\pi)_k$ is surjective for each $k\geq 1$ by induction on $k$. First consider the case $k=1$. Let $f\in(N\zD)_1$, so $f:[n_1]\rw[n_2]$ is a map in $\zD$. Let $f:\zve_1\eta_1$ be its epi-mono factorization in $\zD$, where $\eta_1:[n_1]\rw [s_1]$ and $\zve_1:[s_1]\rw [n_2]$. From Remark \ref{fatdels-rem2} and Lemma \ref{fatdels-lem3} there are maps in $\zuD$
\begin{equation*}
\xymatrix{
[n_1]\ar^{\Id}[r]\ar_{\Id}[d] & [n_1]\ar^{\eta_1}[d]\ar@{^{(}->}^{\zve'_1}[r] & [n'_2]\ar^{\eta'_1}[d]\\
[n_1]\ar_{\eta_{1}}[r] & [s_1]\ar_{\zve_1}[r] & [n_2]
}
\end{equation*}
where $n'_2=n_1+n_2-s_1$. The composite is the map $\wt f$ in $\zuD$
\begin{equation*}
\xymatrix{
[n_1]\ar^{\zve'_1}[r]\ar_{\Id}[d] & [n'_2]\ar^{\eta'_1}[d]\\
[n_{1}]\ar_{f}[r] & [n_2]
}
\end{equation*}
with $\pi\wt f=f$.

Suppose, inductively, that the statement holds for $k-1$ and let
\begin{equation}\label{fatdels-eq001}
[k_1]\xrw{f_1}[n_2]\xrw{f_2}\cdots\xrw{f_k}[n_{k+1}]
\end{equation}
be composable arrows in $\zD$, that is an element of $(N\zD)_k$. By inductive hypothesis there is a string of $k-1$ composable arrows in $\zuD$
\begin{equation}\label{fatdels-eq002}
\begin{gathered}
\xymatrix{
[n'_{1}]\ar@{^{(}->}[r]\ar_{\eta'_1}[d] & [n'_2]\ar^{\eta'_2}[d]\ar@{^{(}->}[r] & \cdots\ar@{}|{\textstyle\cdots}[r] & [n'_{k-1}]\ar^{\eta'_{k-1}}[d]\\
[n_1]\ar_{f_1}[r] & [n_2]\ar_{f_2}[r] & \cdots \ar_{f_{k-2}}[r] & [n_{k-1}]
}
\end{gathered}
\end{equation}
Let $f_k=\zve_k\eta_k$ be the epi-mono factorization in $\zD$. Applying Lemma \ref{fatdels-lem3} we obtain maps in $\zuD$
\begin{equation*}
\xymatrix{
[n'_{k-1}]\ar@{}|=[r]\ar_{\eta'_{k-1}}[d] & [n'_{k-1}]\ar^{\eta_k\eta'_{k-1}}[d] \ar@{^{(}->}^{\zve'_k}[r] & [n'_{k}]\ar^{\eta'_k}[d]\\
[n_{k-1}]\ar_{\eta_k}[r] & [s_{k-1}]\ar_{\zve_k}[r] & [n_k]
}
\end{equation*}
where $n'_k=n'_{k-1}+n_k-s_{k-1}$ and $\eta'_k,\zve'_k$ are as in Lemma \ref{fatdels-lem3}. The composite is the map in $\zuD$
\begin{equation}\label{fatdels-eq003}
\begin{gathered}
\xymatrix{
[n'_{k-1}]\ar@{^{(}->}^{\zve'_k}[r] \ar_{\eta'_{k-1}}[d] & [n'_{k}]\ar^{\eta'_k}[d]\\
[n_{k-1}]\ar_{f_k}[r] & n_k
}
\end{gathered}
\end{equation}
In conclusion \eqref{fatdels-eq002} and \eqref{fatdels-eq003} give a string of $k$ composable maps in $\zuD$ (that is an element of $(N\zuD)_k$) which is sent by $\pi$ to the string \eqref{fatdels-eq001}. This proves the inductive step.
\end{proof}
\begin{definition}\label{fatdels-def1}
Let $\eta:[n']\rw[n]$ be an epimorphism in $\zD$ and let $j_i,n_i$ $(i=1,\ldots,t)$ be as in Definition \ref{segal-def1} so that, by Remark \ref{fatdels-rem1},
\begin{equation*}
\eta=\Id_{[j_1]}\dotp v[n_1]\dotp\Id_{[j_2-j_1]}\dotp v[n_2]\dotp\cdots\dotp\Id_{[n-j_t]}\;.
\end{equation*}
Let $\za_{n_i}:[0]\rw[n_i]$ be the map in $\zDm$ which  sends $0$ to $n_i$. Let $\oeta:[n]\rw[n']$ be given by
\begin{equation*}
\oeta=\Id_{[j_1]}\dotp\za_{n_1}\dotp\Id_{[j_2-j_1]}\dotp\za_{n_2}\dotp\cdots\dotp\Id_{[n-j_t]}\;.
\end{equation*}
\end{definition}
\begin{remark}\label{fatdels.rem3}
Since $v[n_i]\za_{n_i}=\Id_{[0]}$ it follows that $\eta\oeta=\Id_{[n]}$.
\end{remark}
\begin{definition}\label{fatdels-def2}
Let $\eta:[n']\rw[n]$ and $\oeta:[n]\rw[n']$ be as in Definition \ref{fatdels-def1}. Denote by $\nu_{\eta}:[n]\rw \eta$ the map in $\zuD$
\begin{equation*}
\xymatrix{
[n]\ar^{\oeta}[r]\ar_{\Id}[d] & [n']\ar^{\eta}[d]\\
[n]\ar_{\Id}[r] & [n]
}
\end{equation*}
\end{definition}
The following two lemmas will be used in the proof of Proposition \ref{fatdels-pro1}.
\begin{lemma}\label{fatdels-lem4}
Let $\eta,\zve,\eta',\zve'$ be as in Lemma \ref{fatdels-lem3} so that the following diagram commutes:
\begin{equation*}
\xymatrix{
[n']\ar^(0.4){\zve'}[r]\ar_{\eta}[d] & [n'+m-n]\ar^{\eta'}[d]\\
[n]\ar_{\zve}[r] & [m]
}
\end{equation*}
Let $\oeta:[n]\rw[n']$ and $\oeta':[m]\rw[n'+m-n]$ be as in Definition \ref{fatdels-def1}. Then $\zve'\oeta=\oeta'\zve.$
\end{lemma}
\begin{proof}
Since, by the proof of Lemma \ref{fatdels-lem3},
\begin{equation*}
\eta'=\Id_{[\zve(j_1)]}\dotp v[n_1]\dotp\Id_{[\zve(j_2)-\zve(j_1)]}\dotp v[n_2]\dotp\cdots\dotp v[n_t]\dotp\Id_{[m-\zve(j_t)]}
\end{equation*}
we have, by Definition \ref{fatdels-def1}
\begin{equation*}
\oeta'=\Id_{[\zve(j_1)]}\dotp\za_{n_1}\dotp\Id_{[\zve(j_2)-\zve(j_1)]}\dotp\za_{n_2} \dotp\cdots\dotp\za_{n_t}\dotp\Id_{[m-\zve(j_t)]}\;.
\end{equation*}
Also, from the proof of Lemma \ref{fatdels-lem3},
\begin{equation*}
\zve=\zve_{0j_1}\dotp\zve_{j_1j_2}\dotp\cdots\dotp\zve_{j_t n}\cong \zve_{0j_1}\dotp\Id_{[0]}\dotp\zve_{j_1j_2}\dotp\Id_{[0]}\dotp\cdots\dotp\zve_{j_t n}\;.
\end{equation*}
Therefore
\begin{equation}\label{fatdels-eq3}
\oeta'\zve=\zve_{0j_1}\dotp\za_{n_1}\dotp\zve_{j_1j_2}\dotp\za_{n_2}\dotp\cdots \dotp\za_{n_t}\dotp\zve_{j_t n}\;.
\end{equation}
On the other hand, by Definition \ref{fatdels-def1} and by the proof of Lemma \ref{fatdels-lem3},
\begin{equation*}
\begin{split}
  \oeta & = \Id_{[j_1]}\dotp\za_{n_1}\dotp\Id_{[j_2-j_1]}\dotp\za_{n_2}\dotp\cdots \dotp\za_{n_t}\dotp\Id_{[n-j_t]}\\
  \zve'  & = \zve_{0j_1}\dotp\Id_{[n_1]}\dotp\zve_{j_1j_2}\dotp\Id_{[n_2]}\dotp\cdots\dotp \Id_{n_t}\dotp\zve_{j_t n}\;.
\end{split}
\end{equation*}
Therefore
\begin{equation}\label{fatdels-eq4}
\zve'\oeta=\zve_{0j_1}\dotp\za_{n_1}\dotp\zve_{j_1j_2}\dotp\za_{n_2}\dotp\cdots\dotp \za_{n_t}\dotp\zve_{j_t n}\;.
\end{equation}
In conclusion, \eqref{fatdels-eq3} and \eqref{fatdels-eq4} imply that $\oeta'\zve=\zve'\oeta$.
\end{proof}
\begin{lemma}\label{fatdels-lem5}
Any morphism in $\zuD$
\begin{equation*}
\xymatrix{
[n']\ar^{\za}[r]\ar_{\eta}[d] & [m']\ar^{\mu}[d]\\
[n]\ar_{\zve}[r] & [m]
}
\end{equation*}
in which $\zve$ is a monomorphism  factors as a composite in $\zuD$
\begin{equation*}
\xymatrix{
[n']\ar^{\zve'}[r]\ar_{\eta}[d] & [n'+m-n]\ar^{\zb}[r]\ar_{\eta'}[d] & [m']\ar^{\mu}[d]\\
[n]\ar_{\zve}[r] & [m]\ar[r]_{\Id} & [m]
}
\end{equation*}
where $\zve'$ and $\eta'$ are as in Lemma \ref{fatdels-lem3}.
\end{lemma}
\begin{proof}
Recall that, by Remark \ref{fatdels-rem1},
\begin{equation*}
\begin{split}
    & \eta=\Id_{[j_1]}\dotp v[n_1]\dotp\Id_{[j_2-j_1]}\dotp v[n_2]\dotp\cdots\dotp v[n_t]\dotp\Id_{[n-j_t]}\\
    & [n']=[j_1]\dotp[n_1]\dotp[j_2-j_1]\dotp[n_2]\dotp\cdots\dotp[n_t]\dotp[n-j_t]\;.
\end{split}
\end{equation*}
If $0\leq r_1< r_2 < \cdots < r_s < m$, $m_i=|\mu^{-1}(r_i)|-1$ are as in Definition \ref{segal-def1} for the epimorphism $\mu$, then
\begin{equation*}
[m']=[r_1]\dotp[m_1]\dotp[r_2-r_1]\dotp[m_2]\dotp\cdots\dotp[r_s]\dotp[m-r_s]\;.
\end{equation*}
Note that, for each $i=1,\ldots,t$
\begin{equation}\label{fatdels-eq5}
\za(\eta^{-1}(j_i))\subset \mu^{-1}(\zve(j_i))
\end{equation}
since if $\ell\in\eta^{-1}(j_i)$, $\eta(\ell)=j_i$ so that $\mu\za(\ell)=\zve\eta(\ell)=\zve(j_i)$, hence $\za(\ell)\in\mu^{-1}(\zve(j_i))$.

Since $|\eta^{-1}(j_i)|>1$ and $\za$ is a monomorphism, \eqref{fatdels-eq5} implies that, for each $i=1,\ldots,t$
\begin{equation*}
|\mu^{-1}(\zve(j_i))|> |\za(\eta^{-1}(j_i))|=|\eta^{-1}(j_i)|>1
\end{equation*}
so that $r_{j_i}=\zve(j_i)$. Then \eqref{fatdels-eq5} implies that, for all $i=1,\ldots,t$
\begin{equation*}
n_i=|\eta^{-1}(j_i)|-1<|\mu^{-1}(\zve(j_i))|-1=|\mu^{-1}(r_{j_i})|-1=m_{j_i}\;.
\end{equation*}
It follows  that $\za$ restricts to a monomorphisms
\begin{equation}\label{fatdels-eq6}
\za_{|_{[n_i]}}:[n_i]\rw [m_{j_i}]\;.
\end{equation}
From the proof of Lemma \ref{fatdels-lem3},
\begin{equation*}
\zve':[n']\rw[n'+m-n]=\zve(j_1)\dotp[n_1]\dotp[\zve(j_2)-\zve(j_1)] \dotp\cdots\dotp[n_t]\dotp[m-\zve(j_t)]
\end{equation*}
is given by
\begin{equation}\label{fatdels-eq7}
\zve'=\zve_{0 j_1}\dotp\Id_{[n_1]}\dotp\zve_{j_1j_2}\dotp\Id_{[n_2]}\dotp\cdots\dotp\zve_{j_t n}\;.
\end{equation}
We define $\zb$ by specifying its restriction on each component of $[n'+m-n]$ as follows for $i=1,\ldots,t$
\begin{equation*}
\begin{split}
    & \zb_{|_{[\zve(j_1)]}}=\za_{|_{[j_1]}} \\
    & \zb_{|_{[\zve(j_i)-\zve(j_{i-1})]}}=\za_{|_{[j_i-j_{i-1}]}} \\
    &  \zb_{|_{[m-\zve(j_{t})]}}=\za_{|_{[n-j_{t}]}}\\
    & \zb_{|_{[n_i]}}=\za_{|_{[n_i]}}\;.
\end{split}
\end{equation*}
By construction, $\zb\zve'$ and $\za$ agree on each component of $[n']$, hence we conclude that $\zb\zve'=\za$. By the proof of Lemma \ref{fatdels-lem3},
\begin{equation*}
\eta'=\Id_{\zve(j_1)}\dotp v[n_1]\dotp\Id_{[\zve(j_2)-\zve(j_1)]}\dotp v[n_2]\dotp\cdots\dotp v[n_t]+\Id_{m-\zve(j_t)}
\end{equation*}
therefore
\begin{equation*}
\mu\zb_{|_{[\zve(j_1)]}}=\mu\za_{|_{[j_1]}} =\zve\eta_{|_{[j_1]}}=\zve\Id_{[j_1]}=\Id_{[\zve(j_1)]}= \eta'_{|_{[\zve(j_1)]}}
\end{equation*}
A similar calculation shows that $\mu\zb$ and $\eta'$ agree on all other components of $[n'+m-n]$ so that in conclusion $\mu\zb=\eta'$, as required.
\end{proof}
\begin{remark}\label{fatdels-rem4}
From Lemma \ref{fatdels-lem5} it follows that the epimorphism $\eta':[m']\rw[m]$ of Lemma \ref{fatdels-lem3} is uniquely characterized by the given property. In fact, suppose there is another map in $\zuD$
\begin{equation*}
\xymatrix{
[n']\ar@{^{(}->}^{\zve''}[r] \ar[d]_{\eta} & [m']\ar[d]^{\eta''}\\
[n]\ar[r]_{\zve} & [n]
}
\end{equation*}
with $m'=n'+m-n$ and $\zve$ a monomorphism. Then by Lemma \ref{fatdels-lem5} this map factors as a composite of maps in $\zuD$
\begin{equation*}
\xymatrix{
[n_1]\ar^(0.4){\zve'}[r] \ar_{\eta}[d] & [n'+m-n]\ar^{\zb}[r]\ar^{\eta'}[d] & [n'+m-n]\ar^{\eta''}[d]\\
[n]\ar_{\zve}[r] & [m]\ar_{\Id}[r] & [m]
}
\end{equation*}
\end{remark}
Since $\zb$ has the same source and target and it is injective, it must be the identity. It follows that $\eta''=\eta'$.
\begin{proposition}\label{fatdels-pro1}
Let $f_1:\za_1\rw\zg_1$ and $f_2:\za_2\rw\zg_2$ be the following maps in $\zuD$ with $\pi f_1=\pi f_2=f$
\begin{equation*}
\xymatrix{
[m_1]\ar@{^{(}->}^{s_1}[r] \ar_{\za_1}[d] & [n_1]\ar^{\zg_1}[d]\\
[m]\ar_{f}[r] & [n]
}
\qquad \qquad
\xymatrix{
[m_2]\ar@{^{(}->}^{s_2}[r] \ar_{\za_2}[d] & [n_2]\ar^{\zg_2}[d]\\
[m]\ar_{f}[r] & [n]
}
\end{equation*}
Let $\eta:[m]\rw[r]$ and $\zve:[r]\rw[n]$ be the epi-mono factorization of $f$ in $\zD$ and let $f_1=\zve_1\eta_1$, $f_2=\zve_2\eta_2$ be the corresponding factorizations as in Lemma \ref{fatdels-lem2}, given by:
\begin{equation*}
\xymatrix{
[m_1]\ar@{^{(}->}^{i_1}[r] \ar_{\za_1}[d] & [r_1]\ar@{^{(}->}^{j_1}[r]\ar^{\zb_1}[d] & [n_1]\ar^{\zg_1}[d]\\
[m]\ar_{\eta}[r] & [r]\ar_{\zve}[r] & [n]
}
\qquad \qquad
\xymatrix{
[m_2]\ar@{^{(}->}^{i_2}[r] \ar_{\za_2}[d] & [r_2]\ar@{^{(}->}^{j_2}[r]\ar^{\zb_2}[d] & [n_2]\ar^{\zg_2}[d]\\
[m]\ar_{\eta}[r] & [r]\ar_{\zve}[r] & [n]
}
\end{equation*}
Then there are maps in $\zuD$
\begin{equation*}
[m]\xrw{\eta_{min}} \eta \xrw{\zve_{min}}\eta'
\end{equation*}
given by
\begin{equation*}
\xymatrix{
[m]\ar^{\Id}[r] \ar_{\Id}[d] & [m]\ar^{\zve'}[r]\ar^{\eta}[d] & [m+n-r]\ar^{\eta'}[d]\\
[m]\ar_{\eta}[r] & [r]\ar_{\zve}[r] & [n]
}
\end{equation*}
(where $\zve'$ and $\eta'$ are as in Lemma \ref{fatdels-lem3}) such that the following diagrams in $\zuD$ commute:
\begin{itemize}
  \item [a)] \
\begin{equation}\label{fatdels-eq11}
\begin{gathered}
\xymatrix{
[m]\ar^{\eta_{min}}[rr]\ar_{\nu_{\za_{_1}}}[d] && \eta\ar_{w_1}[dl] \ar^{w_2}[dr] && [m]\ar_{\eta_{min}}[ll] \ar^{\nu_{\za_{_2}}}[d]\\
\za_1\ar[r]_{\eta_1} & \zb_1 && \zb_2 & \za_2\ar[l]^{\eta_2}
}
\end{gathered}
\end{equation}
where $\nu_{\za_1},\nu_{\za_2}$ are as in Definition \ref{fatdels-def2},  $w_1=(i_1\ovl{\za}_1,\Id_r)$, $w_2=(i_2\ovl{\za}_2,\Id_r)$, with $\ovl{\za_{i}}:[m]\rw [m_i]$ $i=1,2$ as in Definition \ref{fatdels-def1}.

  \item [b)] \
\begin{equation}\label{fatdels-eq12}
\begin{gathered}
\xymatrix{
[r]\ar^{\zve}[r]\ar_{\nu_{\eta}}[d] & [n]\ar^{\nu_{\eta'}}[d]\\
\eta \ar_{\zve_{min}}[r] \ar_{w_1}[d] & \eta'\ar^{z_1}[d]\\
\zb_1 \ar_{(j_1,\zve)}[r] & \zg_1
}
\qquad
\xymatrix{
[r]\ar^{\zve}[r]\ar_{\nu_{\eta}}[d] & [n]\ar^{\nu_{\eta'}}[d]\\
\eta \ar_{\zve_{min}}[r] \ar_{w_2}[d] & \eta'\ar^{z_2}[d]\\
\zb_2 \ar_{(j_2,\zve)}[r] & \zg_2
}
\end{gathered}
\end{equation}
where the map $z_1:\eta'\rw\zg_1$ is given by applying Lemma \ref{fatdels-lem5} to the map in $\zuD$
\begin{equation*}
\xymatrix{
[m] \ar^{s_1\ovl{\za}_1}[r]\ar_{\eta}[d] & [n_1]\ar^{\zg_1}[d]\\
[r]\ar_{\zve}[r] & [n]
}
\end{equation*}
and similarly for the map $z_2:\eta'\rw\zg_2$
\end{itemize}
\begin{proof} \

\begin{itemize}
  \item [a)] The commutativity of the left hand side of \eqref{fatdels-eq11} corresponds to the equality of the following composite maps in $\zuD$:
\begin{equation*}
\xymatrix{
[m]\ar^{\Id}[r]\ar_{\Id}[d] & [m] \ar^{i_1\ovl{\za}_1}[r] \ar^{\eta}[d] & [r_1]\ar^{\zb_1}[d]\\
[m]\ar_{\eta}[r] & [r]\ar_{\Id}[r] & [r]
}
\quad
\xymatrix{
{\phantom{XX}}\ar@{}[d]|{\equiv}\\
{\phantom{XX}}
}
\quad
\xymatrix{
[m]\ar^{\ovl{\za}_{1}}[r]\ar_{\Id}[d] & [m_{1}] \ar^{i_1}[r] \ar^{\za_{1}}[d] & [r_1]\ar^{\zb_1}[d]\\
[m]\ar_{\Id}[r] & [m]\ar_{\eta}[r] & [r]
}
\end{equation*}
The commutativity of the right hand side of \eqref{fatdels-eq11} is proved similarly.
  \mk

  \item [b)] The commutativity of the top squares in \eqref{fatdels-eq12} corresponds to the equality of the following composite maps:
\begin{equation*}
\xymatrix{
[r]\ar^{\ovl{\eta}}[r]\ar_{\Id}[d] & [m] \ar^(0.4){\zve'}[r] \ar^{\eta}[d] & [m+n-r]\ar^{\eta'}[d]\\
[r]\ar_{\Id}[r] & [r]\ar_{\zve}[r] & [n]
}
\quad
\xymatrix{
{\phantom{XX}}\ar@{}[d]|{\equiv}\\
{\phantom{XX}}
}
\quad
\xymatrix{
[r]\ar^{\zve}[r]\ar_{\Id}[d] & [n] \ar^(0.4){\ovl{\eta}'}[r] \ar^{\Id}[d] & [m+n-r]\ar^{\eta'}[d]\\
[r]\ar_{\zve}[r] & [n]\ar_{\Id}[r] & [n]
}
\end{equation*}
where the fact that $\zve'\ovl{\eta}=\ovl{\eta}' \zve$ holds by Lemma \ref{fatdels-lem4}.

The commutativity of the bottom square in the left diagram \eqref{fatdels-eq12} corresponds to the equality of the composite maps:
\begin{equation*}
\xymatrix{
[m]\ar^(0.4){\zve'}[r]\ar_{\eta}[d] & [m+n-r] \ar^(0.6){i}[r] \ar^{\eta'}[d] & [n_1]\ar^{\zg_1}[d]\\
[r]\ar_{\zve}[r] & [n]\ar_{\Id}[r] & [n]
}
\quad
\xymatrix{
{\phantom{XX}}\ar@{}[d]|{\equiv}\\
{\phantom{XX}}
}
\quad
\xymatrix{
[m]\ar^{i_1\ovl{\za}_1}[r]\ar_{\eta}[d] & [r_1] \ar^{j_1}[r] \ar^{\zb_1}[d] & [n_1]\ar^{\zg_1}[d]\\
[r]\ar_{\Id}[r] & [r]\ar[r]_{\zve} & [n]
}
\end{equation*}
where we use the fact that, by construction of the map $z_1:\eta'\rw\zg_1$, it is $i\zve'=s_1\ovl{\za}_1=j_1i_1 \ovl{\za}_1$.
\end{itemize}
\end{proof}
\end{proposition}


\section{From weakly globular double categories to fair $2$-categories}\label{weak2fair} \

In this Section we construct the first half of the comparison between weakly globular double categories and fair $2$-categories, namely we build in Theorem \ref{StrongpseThe-1} a functor $F_2:\catwg{2}\rw \fair{2}.$
We first prove in Proposition \ref{StrongpsePro-1} that the essential image of the functor $\tr{2}:\catwg{2}\rw\segpsc{}{\Cat}$ which is the restriction of $\tr{2}:\tawg{2}\rw\segpsc{}{\Cat}$  (see \eqref{functor_tr2} in Section \ref{sbs-wg-doubcat}) consists of Segalic pseudo-functors such that their restriction to $\Delmono$ is a functor. We call these pseudo-functors strong Segalic pseudo-functors (Definition \ref{StrongpseDef-1}). Given $X\in \catwg{2}$ we can build a semi-category internal to $\Cat$ with object of objects $X_0^d$ and object of arrows $X_1$. The fair $2$-category $F_2X$ has $X_0^d$ as category of objects, $X_1$ as category of arrows and $X_0$ as category of weak units. The rest of the axioms of fair $2$-category for $F_2 X$ are checked using the properties of weakly globular double categories for $X$.

\subsection{Strong Segalic pseudo-functors}\label{Strongpse}
The inclusion functor
\begin{equation*}
  i:\Delmono\rw\Dop
\end{equation*}
induces a functor
\begin{equation*}
  i^*:\psc{}{\Cat}\rw\pscmon{}{\Cat}\;.
\end{equation*}
Since $\segpsc{}{\Cat}\subset \psc{}{\Cat}$ there is also a functor
\begin{equation*}
  i^*:\segpsc{}{\Cat}\rw\pscmon{}{\Cat}\;.
\end{equation*}
\mk

\begin{definition}\label{StrongpseDef-1}
  A Segalic pseudo-functor $X\in \segpsc{}{\Cat}$ is called strong if $i^*X$ is a functor from $\Delmono$ to $\Cat$. A morphism of strong Segalic pseudo-functors is a pseudo-natural transformation $F$ in $\segpsc{}{\Cat}$ such that $i^*F$ is a natural transformation in $[\Delmono,\Cat]$.  We denote by $\Ssegpsc{}{\Cat}$ the category of strong Segalic pseudo-functors, so that
  \begin{equation*}
    i^*:\Ssegpsc{}{\Cat}\rw[\Delmono,\Cat]\;.
  \end{equation*}
\end{definition}
\begin{remark}\label{StrongpseRem-1}\
\begin{itemize}
  \item [a)] We recall that an object $Z$ of $\funcatmon{}{\Cat}$ is a semi-simplicial object in $\Cat$; that is, a sequence of objects $Z_i\in\Cat$ ($i\geq 0$) together with face operators $\pt_i: Z_n\rw Z_{n-1}$ ($i=0,\ldots,n$) satisfying the semi-simplicial identities $\pt_i\pt_j=\pt_{j-1}\pt_{i}$ if $i<j$.\mk

  \item [b)] Recall that a semi-category internal to $\Cat$ consists of a semi-simplicial object $Z\in[\Delmono,\Cat]$ such that the Segal maps $Z_k\rw\pro{Z_1}{Z_0}{k}$ are isomorphisms for all $k\geq 2$.
\end{itemize}

\end{remark}

The following property of weakly globular double categories is crucial for building the functor $F_2$ of Theorem \ref{StrongpseThe-1}.

\begin{proposition}\label{StrongpsePro-1}
   The restriction
  to $\catwg{2}\subset \tawg{2}$ of the functor $\tr{2}:\tawg{2}\rw\segpsc{}{\Cat}$ in \eqref{functor_tr2} is a functor
   \begin{equation*}
   \tr{2}:\catwg{2}\rw\Ssegpsc{}{\Cat}.
   \end{equation*}
\end{proposition}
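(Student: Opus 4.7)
My plan is to unpack the construction of $\tr{2}X$ for $X\in\catwg{2}$ and verify that, upon restriction to $\Delmono^{op}$, its pseudo-functorial coherence isomorphisms become identities. Recall that $(\tr{2}X)_0=X_0^d$, $(\tr{2}X)_1=X_1$ and $(\tr{2}X)_k=\pro{X_1}{X_0^d}{k}$ for $k\geq 2$, with structure maps obtained by transport of structure along $\eta_0=\zg$, $\eta_1=\Id$ and $\eta_k=\hmu_k$ for $k\geq 2$, with accompanying pseudo-inverses $\mu_n$. The essential simplification for $X\in\catwg{2}$ (as opposed to a general $X\in\tawg{2}$) is that the Segal maps of $X$ are isomorphisms, so $\hmu_k$ factors canonically as the Segal isomorphism $X_k\cong\pro{X_1}{X_0}{k}$ followed by the fully faithful inclusion $j_k\colon\pro{X_1}{X_0}{k}\hookrightarrow\pro{X_1}{X_0^d}{k}$ induced by $\zg$.

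Using the homotopical discreteness of $X_0$, which provides a unique isomorphism between any two objects in the same connected component, together with the internal category structure of $X\in\Cat^2$, I would construct natural retractions $r_k$ of the inclusions $j_k$, yielding a choice of pseudo-inverses $\mu_k$ with $\mu_k\hmu_k=\Id_{X_k}$ for all $k\geq 1$ (the case $k=1$ being trivial). With this choice in place, the transport formula $(\tr{2}X)(f)=\eta_m\circ X(f)\circ\mu_n$ for $f\colon [n]\rw [m]$ in $\Dop$ gives strict functoriality on $\Delmono^{op}$: for any composable pair $[n]\xrw{f}[k]\xrw{g}[m]$ in $\Delmono^{op}$, one necessarily has $n\geq k\geq m$ since $\Delmono^{op}$ contains only face operators, and the composition coherence is governed by $\mu_k\eta_k$ at the intermediate index; either $k\geq 1$, in which case $\mu_k\eta_k=\Id$ by construction, or $k=0$, which forces $m=0$ and thus $g=\Id_{[0]}$, making the composition trivial. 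The identity coherence is handled by the analogous conditions $\zg\zg'=\Id$ at $n=0$ and $\eta_1\mu_1=\Id$ at $n=1$, with a harmless normalization at $n\geq 2$. Functoriality of $\tr{2}$ on morphisms then follows in the same way: for $F\colon X\rw Y$ in $\catwg{2}$, the components of $\tr{2}F$ are strictly natural on $\Delmono^{op}$ by compatibility of $F$ with the canonical retractions $r_k$.

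The hard part will be the construction of the canonical retractions $r_k$ in a manner compatible with all face operators in $\Delmono^{op}$; equivalently, one must exhibit a strictly associative composition $X_1\tiund{X_0^d}X_1\rw X_1$ extending the composition $X_1\tiund{X_0}X_1\rw X_1$ of the internal category $X$. This relies critically on the combination of homotopical discreteness of $X_0$, which supplies canonical bridging isomorphisms, and the strictness of the Segal isomorphisms of $X$, which ensures that these bridging data produce a strictly associative composite; both features are specific to $\catwg{2}$ and absent in the broader category $\tawg{2}$, which is why the strong Segalic structure appears only after restriction to $\catwg{2}$.
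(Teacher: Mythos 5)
Your proposal follows essentially the same route as the paper: the whole proof turns on choosing pseudo-inverses $\nu_k$ of the induced Segal maps with $\nu_k\hmu_k=\Id$ (possible because, for $X\in\catwg{2}$, $\hmu_k$ is an equivalence that is injective on objects), after which the transport-of-structure coherence cells at any intermediate index $k\geq 1$ collapse and the restriction to $\Delta_{mono}^{op}$ becomes a strict semi-simplicial object; this is exactly the cancellation $\pt'_i\pt'_j=\hmu_{k-1}\pt_i\nu_k\hmu_k\pt_j\nu_{k+1}=\hmu_{k-1}\pt_i\pt_j\nu_{k+1}=\cdots=\pt'_{j-1}\pt'_i$ that the paper carries out explicitly, and the same cancellation handles naturality of $\tr{2}F$.

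The one point to correct is your closing paragraph. You defer as ``the hard part'' the construction of retractions $r_k$ ``compatible with all face operators,'' but no such compatibility has to be imposed: the retraction $\nu_k$ is only required to satisfy $\nu_k\hmu_k=\Id$, which is immediate from injectivity on objects of an equivalence, and the semi-simplicial identities (equivalently, the strict associativity of $\pt_1\nu_2\colon X_1\tiund{X_0^d}X_1\rw X_1$) are then a two-line formal consequence of exactly the mechanism you describe in your middle paragraph. So the step you flag as outstanding is already contained in your own argument; writing out that computation is all that remains to make the proof complete.
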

\begin{proof}
  By definition of strong Segalic pseudo-functor, given $X\in\catwg{2}$ and a morphism $F$ in $\catwg{2}$ we need to show that $i^*\tr{2}X\in\funcatmon{}{\Cat}$ and that  $i^*\tr{2}F$ is a natural transformation in $\funcatmon{}{\Cat}$.

  Let $\pt_i:X_n\rw X_{n-1}$ be the face operators of $X$. By Remark \ref{StrongpseRem-1} a) we need to show that $\pt'_i=\tr{2}\pt_i:(\tr{2}X)_n\rw(\tr{2}X)_{n-1}$ satisfy the semi-simplicial identities $\pt'_i\pt'_j=\pt'_{j-1}\pt'_{i}$ if $i<j$. By construction of $\tr{2}$ \cite{PBook2019}*{Theorem 10.1.1}
  \begin{equation*}
(\tr{2}X)_n =
    \left\{
      \begin{array}{ll}
        X_0^d, & {n=0;} \\
        X_1, & {n=1;} \\
        \pro{X_1}{X_0^d}{n}, & {n\geq 2.}
      \end{array}
    \right.
  \end{equation*}
and $\tr{2}X$ is built from $X$ by transport of structure from the equivalences of categories
\begin{eqnarray*}
 & \zg:X_0\rw X_0^d = (\tr{2}X)_0& \\
 & \Id:X_1\rw X_1 = (\tr{2}X)_1 &\\
 & \hmu_k:X_k=\pro{X_1}{X_0}{k}\rw\pro{X_1}{X_0^d}{k} = (\tr{2}X)_k\quad k\geq 2&
\end{eqnarray*}
where $\hmu_k$ is the $k^{th}$ induced Segal map of $X$. Since $\hmu_k$ is injective on objects, its pseudo-inverse $\nu_k$ satisfies
\begin{equation*}
  \nu_k\hmu_k=id\quad \text{for}\quad k\geq 2\;.
\end{equation*}
By \cite{PBook2019}*{Lemma 4.3.2} the face maps $\pt'_i:(\tr{2}X)_k\rw(\tr{2}X)_{k-1}$ are given as follows:

\begingroup
\setlength\abovedisplayskip{0pt}
\begin{itemize}
  \item [i)] For $k=1$, $i=0,1$
\begin{equation*}
  \pt'_i=\zg\pt_i:X_1\rw X_0^d\;.
\end{equation*}
  \item [ii)]  For $k=2$, $i=0,1,2$
\begin{equation*}
  \pt'_i=\pt_i\nu_2:\tenx{X_1}{X_0^d}\rw X_1\;.
\end{equation*}
  \item [iii)] For $k>2$, $i=0,\ldots,k$
\begin{equation*}
  \pt'_i=\hmu_{k-1}\pt_i\nu_k:\pro{X_1}{X_0^d}{k}\rw\pro{X_1}{X_0^d}{k-1}\;.
\end{equation*}
\end{itemize}
\endgroup

We now verify the semi-simplicial identities for $i^*\tr{2}X$ when $i<j$:
\begingroup
\setlength\abovedisplayskip{0pt}
\begin{itemize}
  \item [a)]
\begin{equation*}
\begin{split}
    & (\tr{2}X)_2 \xrw{\pt'_j} (\tr{2}X)_1 \xrw{\pt'_i} (\tr{2}X)_0\\
    & \pt'_i\pt'_j=\zg\pt_i\pt_j\nu_2=\zg\pt_{j-1}\pt_i\nu_2=\pt'_{j-1}\pt'_i\;.
\end{split}
\end{equation*}
  \item [b)]
\begin{equation*}
\begin{split}
    & (\tr{2}X)_3\xrw{\pt'_j}(\tr{2}X)_2 \xrw{\pt'_i}(\tr{2}X)_1\\
    &  \pt'_i\pt'_j=\pt_i\nu_2\hmu_2\pt_j\nu_3=\pt_i\pt_j\nu_3=\pt_{j-1}\pt_i\nu_3=    \pt_{j-1}\nu_2\hmu_2\pt_i\nu_3=\pt'_{j-1}\pt'_i\;.
\end{split}
\end{equation*}
  \item [c)] For $k>2$
\begin{equation*}
 \begin{split}
     & (\tr{2}X)_{k+1}\xrw{\pt'_j}(\tr{2}X)_k \xrw{\pt'_i}(\tr{2}X)_{k-1} \\
     & \pt'_i\pt'_j=\hmu_{k-1}\pt_i\nu_k\hmu_k\pt_j\nu_{k+1}=\hmu_{k-1}\pt_{i}\pt_j\nu_{k+1}=\\
     & =\hmu_{k-1}\pt_{j-1}\pt_{i}\nu_{k+1}=\hmu_{k-1}\pt_{j-1}\nu_{k}\hmu_k\pt_i\nu_{k+1}= \pt'_{j-1}\pt'_i\;.
 \end{split}
\end{equation*}
\end{itemize}
\endgroup
Thus $\tr{2}X$ satisfies the semi-simplicial identities, hence $i^*\tr{2}X\in\funcatmon{}{\Cat}$.

Let $F:X\rw Y$ be a morphism in $\catwg{2}$. By \cite{PBook2019}*{Lemma 4.3.2} $\tr{2}F$ is given by
\begin{equation*}
  (\tr{2}F)_k=
\left\{
  \begin{array}{ll}
    F_0^d, & k=0 \\
    \Id, & k=1 \\
    \hmu_k(\pro{F_1}{F_0}{k})\nu_k, & k\geq 2\;.
  \end{array}
\right.
\end{equation*}
Using the functoriality of $F$, the definition of $\pt'_i$, the fact that $F_0^d\zg=\zg F_0$ and $\nu_j\hmu_k=\Id$ we see that the following diagrams commute for all $k\geq 2$:
\begin{equation*}
\xymatrix{
X_1 \ar^{\pt'_i}[r] \ar_{F_1}[d] & X_0^d \ar^{F_0^d}[d] \\
Y_1 \ar_{\pt'_i}[r] & Y_0^d
}
\qquad
\xymatrix@C=40pt{
\tenx{X_1}{X_0^d} \ar^(0.6){\pt'_i}[r] \ar_{\nu_2}[d] & X_1 \ar^{\Id}[d] \\
\tenx{X_1}{X_0} \ar^(0.6){\pt_i}[r] \ar_{(F_1\tiund{F_0}F_1)}[d] & X_1 \ar^{F_1}[d] \\
\tenx{Y_1}{Y_0} \ar_(0.6){\pt_i}[r] \ar_{\hmu_2}[d] & Y_1 \ar^{\Id}[d] \\
\tenx{Y_1}{Y_0^d} \ar_(0.6){\pt'_i=\pt_i\nu_2}[r] & Y_1
}
\end{equation*}
\begin{equation*}
\xymatrix{
\pro{X_1}{X_0^d}{k}\ar^{\hmu_{k-1}\pt_i\nu_k}[rr] \ar_{\nu_k}[d] && \pro{X_1}{X_0^d}{k-1} \ar^{\nu_{k-1}}[d]\\
\pro{X_1}{X_0}{k}\ar^{\pt_i}[rr] \ar_{(\pro{F_1}{F_0}{k})}[d] && \pro{X_1}{X_0}{k-1} \ar^{(\pro{F_1}{F_0}{k-1})}[d]\\
\pro{Y_1}{Y_0}{k}\ar^{\pt_i}[rr] \ar_{\hmu_k}[d] && \pro{Y_1}{Y_0}{k-1} \ar^{\hmu_{k-1}}[d]\\
\pro{Y_1}{Y_0^d}{k} \ar_{\hmu_{k-1}\pt_i\nu_k}[rr] && \pro{Y_1}{Y_0^d}{k-1}
}
\end{equation*}
In conclusion, for each $k\geq 0$ and $i=0,\ldots,k+1$ the following diagram commutes
\begin{equation*}
\xymatrix@R=40pt{
(\tr{2}X)_{k+1} \ar^{\pt'_{i}}[rr] \ar_{(\tr{2}F)_{k+1}}[d] && (\tr{2}X)_{k} \ar^{(\tr{2}F)_{k}}[d]\\
(\tr{2}Y)_{k+1} \ar_{\pt'_{i}}[rr]  && (\tr{2}Y)_{k}
}
\end{equation*}
This shows that $i^*\tr{2}F$ is a natural transformation of functors in $\funcatmon{}{\Cat}$.
\end{proof}

\subsection{The functor $F_2$}\label{functor f2}
In this section we prove the existence of a functor $F_2$ from weakly globular double categories to fair $2$-categories that preserves $2$-equivalences. In Proposition \ref{fairtowg2Cor-2} we also compare $X\in\catwg{2}$ and $F_2 X\in \fair{2}$ by suitably modifying $X$ to an object $\tilde\pi^* X\in [\zuDop,\Cat]$ and by constructing an equivalence $S_2(X):F_2(X)\rw\tilde\pi^* X$. These results will be used in Theorem \ref{compar-the1} to establish the equivalence after localization of $\catwg{2}$ and $\fair{2}$.

\begin{theorem}\label{StrongpseThe-1}
There is a functor
\begin{equation*}
  F_2:\catwg{2}\rw \fair{2}
\end{equation*}
with $(F_2 X)_0=X_0^d$, $\p{1}X=\p{1}F_2 X$ and, for each $a,b\in X_0^d$, $X(a,b)\cong (F_2 X)(a,b)$. $F_2$ sends $2$-equivalences in $\catwg{2}$ to $2$-equivalences in $\fair{2}$.
\end{theorem}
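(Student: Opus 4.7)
The plan is to invoke Proposition \ref{StrongpsePro-1} to pass from $X\in\catwg{2}$ to the strong Segalic pseudo-functor $\tr{2}X$, whose restriction $i^*\tr{2}X:\Delmono\to\Cat$ is a strict functor satisfying the Segal condition; equivalently, a semi-category internal to $\Cat$ with object of objects $X_0^d$ and object of arrows $X_1$. I will combine this semi-category with the homotopically discrete $X_0$ and the degeneracy $\zs_0:X_0\to X_1$ to produce $F_2X\in\fair{2}$ via the presentation in Remark \ref{fair2catRem-1}.

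Concretely, set $\clO:=X_0^d$ (discrete), $\clA:=X_1$ with $s=\zg\pt_0$, $t=\zg\pt_1$, $\clU:=X_0$ with both source and target equal to the discretization $\zg$, and unit $u:=\zs_0$. The $\clA$-semi-category structure comes from $i^*\tr{2}X$; its composition is $\pt'_1=\pt_1\nu_2$, where $\nu_2$ is the chosen pseudo-inverse of $\hmu_2$ with $\nu_2\hmu_2=\Id$. For the $\clU$-semi-category, I take composition to be the first projection $X_0\tiund{X_0^d}X_0\to X_0$, which is manifestly associative. The semi-functor equation of Remark \ref{fair2catRem-1}(b), namely $\pt'_1(\zs_0 a,\zs_0 b)=\zs_0 a$ for all $(a,b)\in X_0\tiund{X_0^d}X_0$, I verify by choosing $\nu_2(\zs_0 a,\zs_0 b)=(\zs_0 a,\zs_0 a)\in X_1\tiund{X_0}X_1$ (an admissible choice because $(\zs_0 a,\zs_0 a)$ is canonically isomorphic in $X_1\tiund{X_0^d}X_1$ to $(\zs_0 a,\zs_0 b)$ via the unique iso $a\to b$ of the homotopically discrete $X_0$), and noting $\pt_1(\zs_0 a,\zs_0 a)=\zs_0 a$ in the internal category $X$.

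The main obstacle is verifying Remark \ref{fair2catRem-1}(c): each of the five structural maps must be an equivalence of categories. The maps $\clU\rightrightarrows\clO$ coincide with $\zg$, which is an equivalence by Definition \ref{wg-doubcat-def-3}. For the three composition maps, I rely on the following general observation: for any $Y\to X_0^d$ in $\Cat$, the second projection $X_0\tiund{X_0^d}Y\to Y$ is an equivalence of categories, with pseudo-inverse $y\mapsto(\zg'\zg(y),y)$ for a chosen section $\zg'$ of $\zg$; this holds because every equivalence class in $X_0\in\cathd{}$ is a connected groupoid with only identities. Combining this fact with the hypothesis that $\hmu_2$ is an equivalence (Definition \ref{wg-doubcat-def-4}(c)) and with the explicit description of $\pt'_1$, one obtains that $\clU\tiund{\clO}\clA\to\clA$, $\clA\tiund{\clO}\clU\to\clA$ and $\clU\tiund{\clO}\clU\to\clU$ are all equivalences. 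This delivers $F_2X\in\fair{2}$.

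Functoriality of $F_2$ on morphisms follows from the naturality of $\tr{2}$ together with the morphism-level description in the second part of Remark \ref{fair2catRem-1}. The three asserted identities then read off from the construction: $(F_2X)_0=\clO=X_0^d$ by definition; $\p{1}F_2X$ and $\p{1}X$ share the same object set ($pX_0^d=pX_0$) and the same arrow set ($pX_1$), and their compositions agree because $p$ preserves pullbacks over discrete objects, so $pX_2\cong pX_1\tiund{pX_0^d}pX_1$; and $X(a,b)=F_2X(a,b)$ since both are the fibre of $X_1\to X_0^d\times X_0^d$ at $(a,b)$. The last two identities, combined with Definitions \ref{wg-doubcat-def-5} and \ref{def2eqfr}, immediately imply that $F_2$ carries $2$-equivalences in $\catwg{2}$ to $2$-equivalences in $\fair{2}$.
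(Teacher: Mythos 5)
Your proposal follows essentially the same route as the paper's proof: both pass through Proposition \ref{StrongpsePro-1} to extract the strictly associative semi-category structure on $X_1\rightrightarrows X_0^d$ from $i^*\tr{2}X$, take $\clO=X_0^d$, $\clA=X_1$, $\clU=X_0$ with unit $\zs_0$, and verify the equivalence conditions of Remark \ref{fair2catRem-1}(c) using the induced Segal maps and $\hmu_2$. The only cosmetic difference is that you define the $\clU$-composition as a projection and then check compatibility with $\pt_1\nu_2$ by adjusting the pseudo-inverse $\nu_2$ on degenerate pairs, whereas the paper obtains the same structure directly by restricting the semi-simplicial structure of $i^*\tr{2}X$ along $\zs_0$ (a choice of $\nu_2$ that the paper likewise makes implicitly).
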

\begin{proof}
Let $X\in \catwg{2}$. We use Remark \ref{fair2catRem-1} to build  a fair 2-category $F_2 X$. Define
\begin{equation*}
(F_2 X)_{0}=X_0^d,\qquad (F_1 X)_{1}=X_1,\qquad (F_2 X)_{v[1]}=
X_0\;.
\end{equation*}
There is a commuting diagram
\begin{equation*}
\xymatrix@R=40pt @C=40pt{
X_0^d  &  X_1 \ar@<-0.5ex>_{\zg \pt_0}[l] \ar@<0.5ex>^{\zg \pt_1}[l] \\
X_0 \ar@<-0.5ex>_{\zg}[u] \ar@<0.5ex>^{\zg}[u] \ar_{\zs_0}[ur]
}
\end{equation*}
where $\pt_0, \pt_1:X_1\rw X_0$ (resp. $\zs_0: X_0\rw X_1)$ are the face (resp. degeneracy) operators in $X$.
By Proposition \ref{StrongpsePro-1}, $i^*\tr{2}X \in \funcatmon{}{\Cat}$ with
\begin{equation*}
(i^*\tr{2}X)_k=
\left\{
  \begin{array}{ll}
    X_0^d, & k=0 \\
    X_1, & k=1 \\
    \pro{X_1}{X_0^d}{k}, & k>1\;.
  \end{array}
\right.
\end{equation*}
Thus by Remark \ref{StrongpseRem-1} b), $i^*\tr{2}X$ is a semi-category object internal to $\Cat$. When restricted to  $\xymatrix{X_0 \ar@{^{(}->}^{\zs_0}[r] & X_1}$, this becomes a semi-category object internal to $\Cat$
\begin{equation*}
\xymatrix{
X_0 \tiund{X_0^d}X_0 \ar[r] & X_0 \ar@<0.5ex>^{\zg}[r] \ar@<-0.5ex>_{\zg}[r] & X_0^d\;.
}
\end{equation*}
Recall that the face maps $\tenx{X_1}{X_0^d}\rw X_1$ are given by $\pt_i\nu_2$, where $\pt_i:\tenx{X_1}{X_0}\rw X_1$. Since, when restricted to $X_0=\tenx{X_0}{X_0}\hookrightarrow \tenx{X_1}{X_0}$, $\pt_i=\Id$, we conclude that the face maps $\tenx{X_0}{X_0^d}\rw X_0$ are all equal to $\nu_2$.

Similarly when $k\geq 2$ the face maps $\pro{X_1}{X_0^d}{k+1}\rw \pro{X_1}{X_0^d}{k}$ are given by $\hmu_k\pt_i\nu_{k+1}$. Since, where restricted to
\begin{equation*}
X_0=\pro{X_0}{X_0}{k+1}\hookrightarrow \pro{X_1}{X_0}{k}\;,
\end{equation*}
$\pt_i=\Id$, it follows that all the face maps $\pro{X_0}{X_0^d}{k+1}\rw \pro{X_0}{X_0^d}{k}$ are equal to $\hmu_k \nu_{k+1}$.

We also have a semi-functor
\begin{equation*}
\xymatrix{
\cdots X_0\tiund{X_0^d} X_0 \tiund{X_0^d} X_0 \ar@<1.0ex>[r] \ar@<0.33ex>[r] \ar@<-0.33ex>[r] \ar@<-1.0ex>[r] \ar^{(\zs_0\tiund{\Id}\zs_0\tiund{\Id}\zs_0)}[d] & X_0\tiund{X_0^d} X_0 \ar@<0.66ex>[r] \ar@<0.0ex>[r] \ar@<-0.66ex>[r] \ar^{(\zs_0\tiund{\Id}\zs_0)}[d]   & X_0 \ar@<0.33ex>[r] \ar@<-0.33ex>[r] \ar^{\zs_0}[d] & X^d_0 \ar^{\Id}[d] \\
\cdots X_1\tiund{X_0^d} X_1 \tiund{X_0^d} X_1 \ar@<1.0ex>[r] \ar@<0.33ex>[r] \ar@<-0.33ex>[r] \ar@<-1.0ex>[r] & X_1\tiund{X_0^d} X_1 \ar@<0.66ex>[r] \ar@<0.0ex>[r] \ar@<-0.66ex>[r] & X_1 \ar@<0.33ex>[r] \ar@<-0.33ex>[r]  & X^d_0
}
\end{equation*}
Since $X_0\in\cathd{}$, $\zg:X_0\rw X_0^d$ is an equivalence of categories. By Remark \ref{fair2catRem-1} to prove that $F_2 X\in \fair{2}$ it remains to show that the composition maps
\begin{equation*}
  X_0\tiund{X_0^d}X_0 \rw X_0,\qquad  X_0\tiund{X_0^d}X_1 \rw X_1,\qquad X_1\tiund{X_0^d}X_0 \rw X_1
\end{equation*}
are equivalence of categories. As noted above, all face maps $X_0\tiund{X_0^d}X_0 \rw X_0$ are equal to $\nu_2$, so in particular the composition map is an equivalence of categories.

Consider the commuting diagram
\begin{equation*}
\xymatrix@C=40pt{
X_0\tiund{X_0^d}X_1 \ar^{(\zs_0,\Id)} [r] & X_1\tiund{X_0^d}X_1 \ar^(0.6){c\nu_2}[r] & X_1 \\
X_1=X_0\tiund{X_0}X_1 \ar_{(\zs_0,\Id)} [r] \ar^{\hmu_2}[u] & X_1\tiund{X_0}X_1 \ar_(0.6){c}[r] \ar_{\hmu_2}[u] & X_1 \ar^{\Id}[u]
}
\end{equation*}
Note that the bottom morphism is $c(\zs_0,\Id)=\Id:X_1\rw X_1$. From the commutativity of the above diagram, since $\hmu_2$ is an equivalence of categories, it follows that such is $c\nu_2(\zs_0,\Id): X_0\tiund{X_0^d}X_1 \rw X_1$. The case for the map $c\nu_2(\Id,\zs_0): X_1\tiund{X_0^d}X_0 \rw X_1$ is completely similar. In conclusion, $F_2 X\in\fair{2}$.

If $f:X\rw Y$ is a morphism in $\catwg{2}$, by Proposition \ref{StrongpsePro-1} $i^* \tr{2}f$ is a natural transformation in $\funcatmon{}{\Cat}$. Thus there is a semi-functor internal to $\Cat$ $\clA(f) :\clA(X) \rw \clA(Y)$:
\begin{equation*}
\xymatrix{
\cdots X_1\tiund{X_0^d} X_1 \tiund{X_0^d} X_1 \ar@<1.0ex>[r] \ar@<0.33ex>[r] \ar@<-0.33ex>[r] \ar@<-1.0ex>[r] \ar^{(f_1\tiund{f_0^d}f_1\tiund{f_0^d}f_1)}[d] & X_1\tiund{X_0^d} X_1 \ar@<0.66ex>[r] \ar@<0.0ex>[r] \ar@<-0.66ex>[r] \ar^{(f_1\tiund{f_0^d}f_1)}[d]   & X_1 \ar@<0.33ex>[r] \ar@<-0.33ex>[r] \ar^{f_1}[d] & X^d_0 \ar^{f_0^d}[d] \\
\cdots Y_1\tiund{Y_0^d} Y_1 \tiund{Y_0^d} Y_1 \ar@<1.0ex>[r] \ar@<0.33ex>[r] \ar@<-0.33ex>[r] \ar@<-1.0ex>[r] & Y_1\tiund{Y_0^d} Y_1 \ar@<0.66ex>[r] \ar@<0.0ex>[r] \ar@<-0.66ex>[r] & Y_1 \ar@<0.33ex>[r] \ar@<-0.33ex>[r]  & Y^d_0
}
\end{equation*}
which restricts to a semi-functor $\clU(X)\rw \clU(Y)$ internal to $\Cat$:
\begin{equation*}
\xymatrix{
\cdots X_0\tiund{X_0^d} X_0 \tiund{X_0^d} X_0 \ar@<1.0ex>[r] \ar@<0.33ex>[r] \ar@<-0.33ex>[r] \ar@<-1.0ex>[r] \ar^{}[d] & X_0\tiund{X_0^d} X_0 \ar@<0.66ex>[r] \ar@<0.0ex>[r] \ar@<-0.66ex>[r] \ar^{}[d]   & X_0 \ar@<0.33ex>[r] \ar@<-0.33ex>[r] \ar^{}[d] & X^d_0 \ar^{}[d] \\
\cdots Y_0\tiund{Y_0^d} Y_0 \tiund{Y_0^d} Y_0 \ar@<1.0ex>[r] \ar@<0.33ex>[r] \ar@<-0.33ex>[r] \ar@<-1.0ex>[r] & Y_0\tiund{Y_0^d} Y_0 \ar@<0.66ex>[r] \ar@<0.0ex>[r] \ar@<-0.66ex>[r] & Y_0 \ar@<0.33ex>[r] \ar@<-0.33ex>[r]  & Y^d_0
}
\end{equation*}
making the following diagram in $\funcatmon{}{\Cat}$ commute:
\begin{equation*}
\xymatrix{
\clA(X) \ar[r]^{\clA(f)} & \clA(Y) \\
\clU(X) \ar[u] \ar[r]^{\clU(f)} & \clU(Y) \ar[u]
}
\end{equation*}
By Remark \ref{fair2catRem-1} it follows that $F_2 f$ is a morphism in $\fair{2}$.

By construction, $(\p{1}F_2 X)_k=(\p{1}X)_k$ for all $k\in \zDm^{op}$, so that $\p{1}F_2 X=\p{1}X$. Also, for all $a,b\in X_0^d$, $(F_2 X)(a,b)=X(a,b)$. It follows that a $2$-equivalence in $\catwg{2}$ is sent by $F_2$ to a $2$-equivalence in $\fair{2}$.
\end{proof}
\begin{remark}\label{fairtowg2Rem-01}

Let $\eta:[n']\rw[n]$ be an epimorphism in $\zD$ (hence an object of $\zuD)$ and let $j_i,n_i\; (i=1,\cdots,t)$ be as in Definition \ref{segal-def1}; then there are Segal maps \eqref{segal-eq5.1}. By definition of $F_2 X$ we have
\begin{equation}\label{fairtowg2eq-01}
\begin{gathered}
\begin{split}
 &( F_2 X)_{\eta}  \cong (\pro{X_1}{X_0^d}{j_1})\tiund{X_0^d}(\pro{X_0}{X_0^d}{n_1})\tiund{X_0^d}\\
 & (\pro{X_1}{X_0^d}{j_2-j_1})\tiund{X_0^d}\cdots\tiund{X_0^d} (\pro{X_0}{X_0^d}{n_t})\tiund{X_0^d} (\pro{X_1}{X_0^d}{n-j_t})
\end{split}
\end{gathered}
\end{equation}
We next want to relate $F_2 X$ and $X$. For this purpose, we first note that $X\in \catwg{2}\subset \funcat{}{\Cat}$ gives rise to an object of $[\zuDop,\Cat]$ closely related to $X$, as illustrated in the following definitions and lemma.
\end{remark}

\begin{definition}\label{fairtowg2Def-1}
  Let ${\pi^*}:\funcat{}{\Cat}\rw[\zuDop,\Cat]$ be induced by the map $\pi:\zuDop\rw\Dop$ of Section \ref{fatd}. That is, for each $X\in\funcat{}{\Cat}$ and $\eta:[n']\rw[n]$ in $\Dop$
  \begin{equation*}
    (\pi^* X)_{\eta}=X_{\pi(\eta)}=X_n
  \end{equation*}
\end{definition}
Since $\catwg{2}\hookrightarrow\funcat{}{\Cat}$, given $X\in\catwg{2}$, $\pi^*X\in[\zuDop,\Cat]$. In the next definition, we introduce a modification $\tilde{\pi}^*X$ of $\pi^* X$ that will lead in Proposition \ref{fairtowg2Cor-2} to a natural transformation $F_2 X\rw \tilde{\pi}^* X$.
\begin{definition}\label{fairtowg2Def-2}
  Let ${\tilde\pi^*}:\catwg{2}\rw[\zuDop,\Cat]$ be given by
\begin{equation*}
(\tilde\pi^* X)_{\eta}=
\left\{
  \begin{array}{ll}
    (\pi^*X)_{\eta}, & \text{if }\eta\neq 0 \\
    X_0^d, &\text{if }\eta = 0\;.
  \end{array}
\right.
\end{equation*}
where the maps $(\tilde\pi^* X)_{1}=X_1\rightrightarrows (\tilde\pi^* X)_{0}=X_0^d$ are $\zg\pt_i\;\, i=0,1$ and the maps $(\tilde\pi^* X)_{v[1]}=X_0 \rightrightarrows (\tilde\pi^* X)_{0}=X_0^d$ are both equal to $\zg$. All other maps $(\tilde\pi^* X)_{\eta}\rw(\tilde\pi^* X)_{\mu}$ corresponding to maps $\eta\rw\mu$ in $\zuDop$ are equal to the maps $(\pi^* X)_{\eta}\rw(\pi^* X)_{\mu}$.
\end{definition}
\begin{remark}\label{fairtowg2Rem-2}\
We note that $\tilde\pi^* X$ can be obtained from $\pi^* X$ by transport of structure along the equivalences of categories $(\tilde\pi^* X)_{\eta} \simeq (\pi^* X)_{\eta}$ given by $\gamma':X_0^d \rw X_0$ for $\eta=\Id_{0}$ (where $\gamma'$ is the pseudo-inverse to $\gamma$) and $\Id$ for $\eta\neq \Id_{[0]}$. Therefore by Lemma \ref{lem-PP} there is a pseudo-natural transformation $\tilde\pi^*X\rw \pi^* X$ in $\Ps\fatcat{\zuDop}{\Cat}$ which is a levelwise equivalence of categories.
\end{remark}
\begin{lemma}\label{fairtowg2Rem-1}\

\begin{itemize}
  \item [a)] Let $\eta:[n']\rw[n]$ be an epimorphism $\zD$, different from $\Id_{[0]}$ and let $ X\in\catwg{2}$. There is an injective equivalence of categories $X_n\rw (F_2 X)_{\eta}$.

  \item [b)] Let $\eta:[n']\rw[n]$ be an epimorphism $\zD$ and let $X\in\catwg{2}$. There is an equivalence of categories $z_{\eta}(X):(\tilde{\pi}^* X)_{\eta}\rw(F_2X)_{\eta}$.

\end{itemize}
\end{lemma}
\begin{proof} Let $j_i,n_i$ $(i=1,\ldots,t)$ be as in Definition \ref{segal-def1} for $\eta$.
\begin{itemize}
  \item [a)] The induced Segal map condition for $X$ (see Definition \ref{wg-doubcat-def-4} c)) gives injective equivalence of categories
\begin{equation*}
\begin{split}
   X_n & \cong (\pro{X_1}{X_0}{j_1})\tiund{X_0}(\pro{X_0}{X_0}{n_1})\tiund{X_0} (\pro{X_1}{X_0}{j_2-j_1})\tiund{X_0}\cdots  \\
    &\cdots \tiund{X_0}(\pro{X_1}{X_0}{n-j_t})\rw\\
   & \rw (\pro{X_1}{X_0^d}{j_1})\tiund{X_0^d}(\pro{X_0}{X_0^d}{n_1})\tiund{X_0^d}\cdots\\
    &\cdots\tiund{X_0^d}(\pro{X_1}{X_0^d}{n-j_t})=F_2 X
\end{split}
\end{equation*}
where the isomorphisms on the left hand side holds since $\pro{X_0}{X_0}{n_i}\cong X_0$ for $i=1,\ldots,t$ and the isomorphism on the right hand side holds by \eqref{fairtowg2eq-01}.\mk

  \item [b)] By construction $(F_2 X)_{[0]}=X_0^d=(\tilde{\pi}^* X)_{0}$ so we can take $z_0=\Id$ and when $\eta$ is not $\Id_{[0]}$, $z_{\eta}(X)$ is as in a) (as $(\wt\pi^* X)_{\eta}=(\pi^* X)_n=X_n$ when $\eta\neq 0$.
\end{itemize}
\end{proof}

The following proposition, together with Theorem \ref{StrongpseThe-1}, will be used in the proof of Lemma \ref{fairtowg2-lem1}, leading to the main result Theorem \ref{compar-the1}.

\begin{proposition}\label{fairtowg2Cor-2}
  Let $F_2:\catwg{2}\rw\fair{2}$ be as in Theorem \ref{StrongpseThe-1} and $\tilde\pi^*$ as in Definition \ref{fairtowg2Def-2}. There is a natural transformation $S_2(X):F_2(X)\rw\tilde\pi^* X$ in $[\zuDop,\Cat]$ which is a levelwise equivalence of categories.
\end{proposition}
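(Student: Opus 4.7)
The plan is to define $S_2(X)_{\un}$ as a chosen pseudo-inverse to the injective equivalence $z_{\un}:(\tilde\pi^* X)_{\un}\rw(F_2 X)_{\un}$ of Lemma \ref{fairtowg2Rem-1}, and then to verify naturality in $\un$. For $\un=0$ set $S_2(X)_0=\Id_{X_0^d}$. For $\un\neq 0$, let $n=\pi(\un)$ and let $s$ be the total internal length of $\un$ (as in Remark \ref{fairtowg2Rem-3}). The proof of Lemma \ref{fairtowg2Rem-1} produces, as a composite of projections together with pseudo-inverses $\nu_s$ to induced Segal maps $\hmu_s$ (satisfying $\nu_s\hmu_s=\Id$), a morphism
\begin{equation*}
S_2(X)_{\un}:(F_2 X)_{\un}\rw X_n=\pro{X_1}{X_0}{n}=(\tilde\pi^*X)_{\un}
\end{equation*}
which is a one-sided inverse to $z_{\un}$. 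Being a pseudo-inverse to an equivalence, each $S_2(X)_{\un}$ is itself an equivalence of categories, so the levelwise-equivalence claim is immediate once we have naturality.

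The heart of the argument is therefore the naturality square
\begin{equation*}
\xymatrix@C=40pt@R=30pt{
(F_2 X)_{\um}\ar[r]^{S_2(X)_{\um}}\ar[d]_{(F_2 X)(\uf)} & (\tilde\pi^*X)_{\um}\ar[d]^{(\tilde\pi^*X)(\uf)}\\
(F_2 X)_{\un}\ar[r]_{S_2(X)_{\un}} & (\tilde\pi^*X)_{\un}
}
\end{equation*}
for each $\uf:\um\rw\un$ in $\zuDop$. By Lemma \ref{fairtowg2Lem-2} we factor $\uf=\uzve\ueta$ with $\ueta$ an epimorphism in $\zuDop$ and $\uzve$ a monomorphism, so it suffices to check the naturality square for each factor separately. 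For the mono $\uzve$, the map $(\tilde\pi^*X)(\uzve)$ is a face-type operation $X(\zve)$, and Lemma \ref{fairtowg2Lem-3} together with the compatibility diagram \eqref{fairtowg2Lem-3-Eq} produces the intermediate factorizations $\underline{r(m)},\underline{n(m)}$ through which we can compare the projection description of $(F_2 X)(\uzve)$ with the face map of $X$; commutativity then follows from the semi-simplicial identities for $i^*\tr{2}X$ established in Proposition \ref{StrongpsePro-1}, combined with the preservation of pullbacks over the discrete object $X_0^d$ by $F_2 X$. For the epi $\ueta$, the image $(F_2 X)(\ueta)$ inserts weak units through the degeneracy $\zs_0:X_0\rw X_1$ at the contracted positions (and identities elsewhere), while $(\tilde\pi^*X)(\ueta)=X(\eta)$ involves the corresponding degeneracy in $X$; commutativity follows from the construction of $F_2 X$ in Theorem \ref{StrongpseThe-1}, from $\zg\zg'=\Id_{X_0^d}$, and from the fact that after applying $S_2(X)$ we land in fibered products over $X_0$, where the description of $(\tilde\pi^*X)(\ueta)$ coincides with that induced by $\tr{2}X$ via the pseudo-inverses $\nu_s$.

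The main obstacle I anticipate is the naturality with respect to epimorphisms that collapse links to weak units, since these interact delicately with the category $\clU=X_0$ of weak units in $F_2 X$ and with the replacement of $X_0$ by $X_0^d$ in $\tilde\pi^*X$ at level $0$; keeping the bookkeeping straight requires consistent use of the colimit description of $\un\in\zuD$ in Remark \ref{fairtowg2Rem-3} and of Corollary \ref{fairtowg2Cor-1} to lift composable chains in $\Dop$ to $\zuDop$, so that the commutativity of each elementary naturality square reduces to a face/degeneracy identity in the underlying simplicial object $X$ pre- and post-composed with the Segal equivalences $\hmu_k,\nu_k$ and the discretization $\zg$.
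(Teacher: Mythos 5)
Your definition of $S_2(X)_{\un}$ as the retraction of the injective equivalence $z_{\un}$ of Lemma \ref{fairtowg2Rem-1} (so that $(S_2X)_{\un}z_{\un}=\Id$) is exactly the paper's, and the levelwise-equivalence claim is handled identically. Where you diverge is in the naturality argument, and there your route is both heavier and less secure than the paper's. The paper never invokes the epi-mono factorization of Lemma \ref{fairtowg2Lem-2}, nor Lemma \ref{fairtowg2Lem-3}, Corollary \ref{fairtowg2Cor-1} or diagram \eqref{fairtowg2Lem-3-Eq}: those results compare \emph{different lifts} to $\zuDop$ of a map of $\Dop$ and are the machinery for building $T_2$ and $R_2$ in Section \ref{fairtowg2}; they are not needed here because every map of $\tilde\pi^* X$ is already determined by its image under $\pi$. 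Instead the paper proves the stronger pointwise identity $F_2X(\uf)=z_{\um}\circ(\tilde\pi^*\uf)\circ(S_2X)_{\un}$ for all $\uf$, by observing that the generating maps of the diagram $F_2X$ (the semi-category structure maps of $X_1\rightrightarrows X_0^d$ and of $X_0\rightrightarrows X_0^d$, together with the action maps $X_0\tiund{X_0^d}X_1\rw X_1$ and $X_1\tiund{X_0^d}X_0\rw X_1$) are by construction $\nu_2$ followed by the corresponding structure map of $X$, i.e.\ already of the required form, and that both sides of the identity are functorial in $\uf$ thanks to $(S_2X)_{\un}z_{\un}=\Id$; naturality is then the one-line computation $(S_2X)_{\um}F_2X(\uf)=(S_2X)_{\um}z_{\um}(\tilde\pi^*\uf)(S_2X)_{\un}=(\tilde\pi^*\uf)(S_2X)_{\un}$. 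Your plan can be completed, since your elementary squares ultimately reduce to the same two facts ($\nu_k\hmu_k=\Id$ and the explicit form of the structure maps of $F_2X$), but as written the commutativity of those squares is asserted rather than proved, and the specific results you lean on would not establish it: in particular the semi-simplicial identities $\pt'_i\pt'_j=\pt'_{j-1}\pt'_i$ of Proposition \ref{StrongpsePro-1} concern composites of two faces of $\tr{2}X$, not the compatibility of a single face of $F_2X$ with the corresponding face of $X$ under $\hmu_k$ and $\nu_k$, which is what the naturality square actually requires.
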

\begin{proof}
For each $\eta:[n']\rw [n]$ in $\zuDop$ let $(S_2(X))_{\eta}$ be the pseudo-inverse to the equivalence of categories $z_{\eta}(X)$ in Lemma \ref{fairtowg2Rem-1}, so that $(S_2(X))_{\eta}$ is itself an equivalence of categories.

Let $\uf:\eta\rw \mu$ be the map in $\zuDop$
\begin{equation*}
\xymatrix{
[n']\ar@{^{(}->}^{i}[r]\ar_{\eta}[d] & [m']\ar^{\mu}[d]\\
[n]\ar_{f}[r]& [m]
}
\end{equation*}
We claim that $F_2\uf$ is  the composite
\begin{equation}\label{fairtowg2Eq-3A}
(F_2 X)_{\eta}\xrw{(S_2 X)_{\eta}}(\tilde\pi_* X)_{\eta}\xrw{\tilde\pi_* \uf} (\tilde\pi_* X)_{\mu} \xrw{z_{\mu}(X)}(F_2 X)_{\mu}\;.
\end{equation}
In fact, by the construction of $F_2$ in the proof of Theorem \ref{StrongpseThe-1}, $F_2 X$ is determined by the following maps:
\begin{itemize}
  \item [a)]  The maps giving the semi-category structures
  \begin{equation*}
    (F_2 X)_{2}=\tenx{X_1}{X_0^d} \rw X_1=(F_2 X)_{1}\rightrightarrows X_0^d=(F_2 X)_{0}
  \end{equation*}
\begin{equation*}
  (F_2 X)_{v[2]}=\tenx{X_0}{X_0^d} \rw X_0=(F_2 X)_{v[1]}\rightrightarrows X_0^d=(F_2 X)_{0}
\end{equation*}
which are given by the composites
\begin{equation*}
  \tenx{X_1}{X_0^d}\xrw{\nu_2} \tenx{X_1}{X_0}\xrw{c}X_1
\end{equation*}
\begin{equation*}
  \tenx{X_0}{X_0^d}\xrw{\nu_2} \tenx{X_0}{X_0}=X_0
\end{equation*}

  \item [b)] The maps
\begin{equation*}
  (F_2 X)_{v[1]\dotp [1]}=X_0\tiund{X_0^d}X_1\rw X_1=(F_2 X)_{1}
\end{equation*}
\begin{equation*}
  (F_2 X)_{[1]\dotp v[1]}=X_1\tiund{X_0^d}X_0\rw X_1=(F_2 X)_{1}
\end{equation*}
which are given as composites
\begin{equation*}
X_0\tiund{X_0^d}X_1\xrw{\nu_2}X_0\tiund{X_0}X_1=X_1
\end{equation*}
\begin{equation*}
X_1\tiund{X_0^d}X_0\xrw{\nu_2}X_1\tiund{X_0}X_0=X_1\;.
\end{equation*}
We see that the maps in a) and b) are of the form \eqref{fairtowg2Eq-3A}. Since all other maps $F_2 f:(F_2 X)_{\eta}\rw(F_2 X)_{\mu}$ are determined by these, they also are of the form \eqref{fairtowg2Eq-3A}. This proves the claim, so that
\begin{equation*}
  (F_2 X)(\uf)=z_{\mu}(X)(\tilde\pi_* \uf)(S_2 X)_{\eta}\;.
\end{equation*}
Since by Lemma \ref{fairtowg2Rem-1} $z_{\eta}(X)$ is an injective equivalence, $(S_2 X)_{\eta} z_{\eta}(X)=\Id$ for all $\eta\in\zuDop$ so that
\begin{equation*}
  (S_2 X)_{\mu}(F_2 X)(\uf)=(S_2 X)_{\mu}z_{\mu}(X)(\tilde\pi_*\uf)(S_2 X)_{\eta}=(\tilde\pi_*\uf)(S_2 X)_{\eta}\;.
\end{equation*}
 Therefore the following diagram commutes
\begin{equation*}
\xymatrix@C=40pt@R=35pt{
(F_2 X)_{\eta} \ar[r]^{F_2 X(\uf)} \ar[d]_{(S_2 X)_{\eta}} & (F_2 X)_{\mu} \ar[d]^{(S_2 X)_{\mu}}\\
(\tilde\pi_* X)_{\eta} \ar[r]_{\tilde\pi_* \uf} & (\tilde\pi_* X)_{\mu}
}
\end{equation*}
This shows that $S_2 X: F_2 X\rw \tilde\pi_* X$ is a natural transformation in $[\zuDop,\Cat]$.
\end{itemize}

\end{proof}

\section{Weakly globular fair $2$-categories}\label{wgfair2}

In this section we introduce a new player, the category $\fairwg{2}$ of weakly globular fair 2-categories. This structure will be needed in the proof of our main comparison result Theorem \ref{compar-the1}. We show in Theorem \ref{fairtowg2The-1} that weakly globular fair 2-categories arise as strictification of Segalic pseudo-functors from $\zuDop$ to $\Cat$, which we introduce in Definition \ref{segpseu-def1}. We also show in Lemma \ref{fairtowg2Lem-6} the existence of a functor $D:\fairwg{2}\rw \fair{2}$ which preserves 2-equivalences. Since the constructions and results of this section will not be used until the proof of Theorem \ref{compar-the1}, this section may be skipped at first reading.

\subsection{Segalic pseudo-functors from $\zuDop$ to $\Cat$}\label{segpseu}
Let $\eta:[n']\rw [n]$ be an object of $\zuD$ and $j_i,n_i\;(i=1,\ldots,t)$ be as in Definition \ref{segal-def1}. Let $H\in\Ps[\Dop,\Cat]$ be such that $H_0$ is discrete. This discreteness condition implies that there are commuting diagrams as in Section \ref{segal} (even if $H$ is a pseudo-functor rather than a functor). Therefore there is a Segal map (similar to \eqref{segal-eq5.1} in Section \ref{segal})
\begin{equation}\label{segpseu-eq1}
    H_{\eta}\rw H_1^{j_1} \tiund{H_0}H^{n_1}_{v[1]}\tiund{H_0} H_1^{j_2-j_1}
   \tiund{H_0}\cdots \tiund{H_0}H_{v[1]}^{n_t}\tiund{H_0}H_1^{n-j_t}\;,
\end{equation}
where we denoted $H_1^k=\pro{H_1}{H_0}{k}$ and $H^k_{v[1]}=\pro{H_{v[1]}}{H_0}{k}$.
\begin{definition}\label{segpseu-def1}
The category $\segps[\zuDop,\Cat]$ of Segalic pseudo-functors from $\zuDop$ to $\Cat$ is the full subcategory of $\Ps[\zuDop,\Cat]$ whose objects $H$ are such that
\begin{itemize}
  \item [i)] $H_0$ is discrete.

  \item [ii)] For each $\eta\in\zuDop$ the Segal map \eqref{segpseu-eq1} is an isomorphism.

  \item [iii)] The maps
$$H_{v[1]}\rightrightarrows H_0,\,\; H_{v[1]}\tiund{H_0}H_1\rw H_{v[1]}\lw H_1\tiund{H_0}H_{v[1]},\,\; H_{v[1]}\tiund{H_0}H_{v[1]}\rw H_{v[1]}$$

\nid which are images of the maps \eqref{fatdelta-eq1} (see Remark \ref{fair2catRem-1} c)) are equivalences of categories.

\end{itemize}
\end{definition}
\begin{remark}\label{segpseu-rem1}
From the definitions, if $X\in\fair{2}$, then $X\in \segps\fatcat{\zuDop}{\Cat}$. In fact the inclusion $\fatcat{\zuDop}{\Cat}\subset \Ps\fatcat{\zuDop}{\Cat}$ restricts to the inclusion $\fair{2}\subset \segps\fatcat{\zuDop}{\Cat}$. This is analogous to the fact that the inclusion $\funcat{}{\Cat}\subset \Ps\funcat{}{\Cat}$ restricts to the inclusion $2-\Cat\subset\segps\funcat{}{\Cat}$.

Recall \cite{PW} that the functor $2$-category $[\zuDop,\Cat]$ is 2-monadic over $[\ob(\zuDop),\Cat]$. Let $U:[\zuDop,\Cat]\rw[ob(\zuDop),\Cat]$ be the forgetful functor; then its left adjoint is
\begin{equation*}
  (FY)_{\eta}=\uset{\mu\in ob(\zuDop)}{\cop}\zuDop (\mu,\eta)\times Y_{\mu}
\end{equation*}
for $Y\in[ob(\zuDop),\Cat]$, $\eta\in\zuDop$.

Let $T$ the monad corresponding to the adjunction $F\dashv U$. Then the pseudo $T$-algebra corresponding to $H\in\Ps[\zuDop,\Cat]$ has structure map $h:TUH\rw UH$ as follows. Denoting
\begin{equation*}
  (TUH)_{\eta}=\uset{\mu\in\zuD}{\cop}\zuD(\eta,\mu)\times H_{\mu}=\uset{\mu\in\zuD}{\cop}\;\uset{\zuD(\eta,\mu)}{\cop}H_{\mu}\;.
\end{equation*}
and, if $f\in\zuD(\eta,\mu)$, denoting
\begin{equation*}
  i_{\mu}:\uset{\zuD(\eta,\mu)}{\cop} H_{\mu} \rw \uset{\mu\in\zuD}{\cop}\; \uset{\zuD(\eta,\mu)}{\cop} H_{\mu} = (TUH)_{\mu},\qquad j_f:H_{\mu}\rw \uset{\zuD(\eta,\mu)}{\cop} H_{\mu}
\end{equation*}
the coproduct inclusions, then the map $h$ is the unique map satisfying
\begin{equation*}
  h_{\eta}i_{\mu}j_f=H(f)\;.
\end{equation*}
\end{remark}
\begin{lemma}\label{segpseu-lem1}
  Let $U,T,F$ be as above. Let $H\in\segps\fatcat{\zuDop}{\Cat}$. Then
\begin{itemize}
\item [a)] There are functors
\begin{equation*}
\pt'_0,\pt'_1:(TUH)_{1}\rightrightarrows (TUH)_0, \qquad \wt{\pt}_0,\wt{\pt}_1:(TUH)_{v[1]}\rightrightarrows (TUH)_0
\end{equation*}
making the following diagrams commute
\begin{equation}\label{segpseu-eq1a}
\begin{gathered}
\xymatrix@C=50pt@R=35pt{
(TUH)_{1}\ar[r]^{h_{1}}\ar@<-0.5ex>[d]_{\pt'_{0}} \ar@<0.5ex>[d]^{\pt'_{1}} & (UH)_{1} \ar@<-0.5ex>[d]_{\pt_{0}} \ar@<0.5ex>[d]^{\pt_{1}}\\
(TUH)_0 \ar[r]_{h_0} & (UH)_0
}
\qquad
\xymatrix@C=50pt@R=35pt{
(TUH)_{v[1]}\ar[r]^{h_{v[1]}}\ar@<-0.5ex>[d]_{\wt{\pt}_{0}} \ar@<0.5ex>[d]^{\wt{\pt}_{1}} & (UH)_{v[1]} \ar@<-0.5ex>[d]_{\pt_{0}} \ar@<0.5ex>[d]^{\pt_{1}}\\
(TUH)_0 \ar[r]_{h_0} & (UH)_0
}
\end{gathered}
\end{equation}

\item [b)]

Let $\eta:[n']\rw[n]$ be in $\zuD$ and let $j_i,n_i\;(i=1,\ldots,t)$ be as in Definition \ref{segal-def1}. Then
\begin{equation*}
(TUH)_{\eta}\cong(TUH)_1^{j_1}\tiund{(TUH)_0}(TUH)_{v[1]}^{n_1}\tiund{(TUH)_0} \cdots\tiund{(TUH)_0}(TUH)_1^{n-j_t}\;,
\end{equation*}
where we denoted
\begin{equation*}
\begin{split}
  (TUH)^k_1 & = \pro{(TUH)_1}{(TUH)_0}{k} \\
   (TUH)^k_{v[1]} & = \pro{(TUH)_{v[1]}}{(TUH)_{0}}{k}\;.
\end{split}
\end{equation*}
\item [c)] Let $\eta\in\zuDop$ be as in b). Then the morphism $h_{\eta}:(TUH)_{\eta}\rw(UH)_{\eta}$ is given by:
\begin{equation*}
h_1^{j_1}\times h^{n_1}_{v[1]}\times  h_1^{j_2-j_1}\times\cdots\times  h_1^{n-j_t}
\end{equation*}
where we denoted:
\begin{equation*}
h_1^k=\pro{h_1}{h_0}{k},\qquad h^k_{v[1]}=\pro{h_{v[1]}}{h_0}{k}\;.
\end{equation*}
\end{itemize}
\end{lemma}
\begin{proof}\

\begin{itemize}
  \item [a)] Let $\zs_i:[0]\rw[1]$, \; $\zst_i:[0]\rw v[1]$ ($i=0,1$) be as in Section \ref{segal}. Let
\begin{equation*}
\pt'_i:(TUH)_1\rw(TUH)_0,\qquad \wt\pt_i:(TUH)_{v[1]}\rw(TUH)_0\quad (i=0,1)
\end{equation*}
\nid be the functors determined by
\begin{equation*}
  \pt'_i i_{\mu}j_f=i_{\mu}j_{f\zs_i},\quad \ptt_i i_{\mu}j_f=i_{\mu}j_{f\zst_i}\quad\text{when }f\in\zuD(\eta,\mu)\;.
\end{equation*}
Then
\begin{equation*}
\begin{split}
    & h_0\pt'_i i_{\mu}j_f=h_0 i_{\mu}j_{f\zs_{i}}=H(f\zs_i),\qquad h_0\ptt_i i_{\mu}j_f=h_0 i_{\mu}j_{f\zst_{i}}=H(f\zst_i)\\
    & \pt_i h_1 i_{\mu}j_f=H(\zs_i)H(f),\qquad\qquad\quad \ptt_i h_1 i_{\mu}j_f=H(\zst_i)H(f)\;.
\end{split}
\end{equation*}
Since $H\in\Ps[\zuDop,\Cat]$ and $H_0$ is discrete, it follows that
\begin{equation*}
H(f\zs_i)=H(\zs_i)H(f),\qquad H(f\zst_i)=H(\zst_i)H(f)
\end{equation*}
so that, from above
\begin{equation*}
h_0\pt'_i i_{\mu}j_f=\pt_i h_1 i_{\mu}j_f, \qquad h_0\ptt_i i_{\mu}j_f=\ptt_i h_{v[1]} i_{\mu}j_f\;.
\end{equation*}
We conclude that
\begin{equation*}
h_0\pt'_i=\pt_i h_1,\qquad h_0\ptt_i=\ptt_i h_{v[1]}\;.
\end{equation*}

  \item [b)] From the proof of a), the functors
\begin{equation*}
\pt'_i:(TUH)_1\rw (TUH)_0,\qquad \ptt_i:(TUH)_{v[1]}\rw (TUH)_0
\end{equation*}
for $i=0,1$ are determined by the functors
\begin{equation*}
\begin{split}
    & (\ovl{\zs}_i,\Id):\zuD([1],\mu)\times H_{\mu}\rw \zuD([0],\mu)\times H_{\mu} \\
    & (\ovl{\zst}_i,\Id):\zuD(v[1],\mu)\times H_{\mu}\rw \zuD([0],\mu)\times H_{\mu}
\end{split}
\end{equation*}
where $\ovl{\zs}_i(g)=g\zs_i$ for $g\in\zuD([1],\mu)$ and $\ovl{\zst}_i(\tilde{g})=\tilde{g}\zst_i$ for $\tilde{g}\in\zuD(v[1],\mu)$. Further.
\begin{equation}\label{segpseu-eq2}
(TUH)_1=\uset{\mu\in\zuD}{\cop}\zuD([1],\mu)\times H_{\mu}\;,
\end{equation}
\begin{equation}\label{segpseu-eq3}
(TUH)_{v[1]}=\uset{\mu\in\zuD}{\cop}\zuD(v[1],\mu)\times H_{\mu}\;,
\end{equation}
\begin{equation}\label{segpseu-eq4}
(TUH)_0=\uset{\mu\in\zuD}{\cop}\zuD([0],\mu)\times H_{\mu}\;.
\end{equation}
This implies
\begin{equation}\label{segpseu-eq5}
\uset{\mu\in\zuD}{\cop}\{\pro{\zuD([1],\mu)}{\zuD([0],\mu)}{j}\}\times H_{\mu}=(TUH)^j_1
\end{equation}
and similarly
\begin{equation}\label{segpseu-eq6}
\uset{\mu\in\zuD}{\cop}\{\pro{\zuD(v[1],\mu)}{\zuD([0],\mu)}{s}\}\times H_{\mu}=(TUH)^s_{v[1]}\;.
\end{equation}
Using \eqref{segpseu-eq5}, \eqref{segpseu-eq6} and the bijection \eqref{fatdels-eq2} in Lemma \ref{fatdels-lem1} we obtain
\begin{equation*}
\begin{split}
& (TUH)_{\eta}  = \uset{\mu\in\zuD}{\cop}\zuD(\eta,\mu)\times H_{\mu} \cong  \\
    & \cong (TUH)^{j_1}_{1}\tiund{(TUH)_0}(TUH)^{n_1}_{v[1]}\tiund{(TUH)_0} (TUH)^{j_2-j_1}\tiund{(TUH)_0}\cdots\\
   & \cdots\tiund{(TUH)_0} (TUH)^{n-j_t}\;.
\end{split}
\end{equation*}

  \item [c)] From above, for each $f\in\zuD(\eta,\mu)$, $h_{\eta}i_{\mu}j_f=H(f)$. Let $f$ correspond to
      \begin{equation*}
        \zd_1^{j_1}\tiund{\zd_0}\tilde{\zd}_1^{n_1}\tiund{\zd_0}\zd_{1}^{j_2-j_1}\tiund{\zd_0} \cdots \tiund{\zd_0}\zd_{1}^{n-j_t}
      \end{equation*}
      in the bijection \eqref{fatdels-eq1} of Lemma \ref{fatdels-lem1}, where $\zd_1\in\zuD([1],\mu)$, $\tilde{\zd}_1\in\zuD(v[1],\mu)$, $\zd_0\in\zuD([0],\mu)$, with $\zd_1^j=\pro{\zd_1}{\zd_0}{j}$, $\tilde{\zd}_1^j=\pro{\tilde{\zd}_1}{\tilde{\zd}_0}{j}$.

      Then $j_f=j_{\zd_1^{j_1}}\tiund{j_{\zd_0}} j_{\tilde{\zd}_1^{n_1}} \tiund{j_{\zd_0}}\cdots \tiund{j_{\zd_0}} j_{\zd_1^{n-j_t}}$. Since by hypothesis the Segal maps \eqref{fairtowg2Eq-3A} are isomorphisms, $H(f)$ corresponds to
       \begin{equation*}
       H(\zd_1)^{j_1}\tiund{H(\zd_0)}H(\tilde{\zd}_1)^{n_1}\tiund{H(\zd_0)} \cdots\tiund{H(\zd_0)}H(\zd_1)^{n-j_t}\;.
       \end{equation*}
Therefore
\begin{equation*}
\begin{split}
 &\ \;\;\;\;h_{\eta}i_{\mu}j_f=H(f)= \\
 & = (h_{1}i_{\mu}j_{\zd_1})^{j_1}\tiund{h_{0}i_{\mu}j_{\zd_0}} (h_{v[1]}i_{\mu}j_{\tilde{\zd}_1})^{n_1} \tiund{h_{0}i_{\mu}j_{\zd_0}} \cdots \tiund{h_{0}i_{\mu}j_{\zd_0}}(h_{1}i_{\mu}j_{\zd_1})^{n-j_t}=\\
& = (h_{1}^{j_1}\tiund{h_0}h_{v[1]}^{n_1}\tiund{h_0}h_{1}^{j_2-j_1} \tiund{h_0} \cdots \tiund{h_0}h_{1}^{n-j_t})i_{\mu}j_{f}\;.
\end{split}
\end{equation*}
It follows that
\begin{equation*}
h_\eta=h_1^{j_1}\tiund{h_0} h^{n_1}_{v[1]}\tiund{h_0}  h_1^{j_2-j_1}\tiund{h_0}\cdots\tiund{h_0}  h_1^{n-j_t}\;.
\end{equation*}
\end{itemize}

\end{proof}

\subsection{Weakly globular fair $2$-categories}\label{wgfair}
We now introduce the category $\fairwg{2}$ of weakly globular fair $2$-categories. This is a weakly globular version of $\fair{2}$. We replace the discrete object $X_0$ with a homotopically discrete object while retaining the strict Segal condition. This gives semi-categories internal to $\Cat$
\begin{equation*}
\tenx{X_1}{X_0}\rw X_1\rightrightarrows X_0\quad \text{and}\quad \tenx{X_{v[1]}}{X_0}\rw X_{v[1]}\rightrightarrows X_0\;.
\end{equation*}
The set underlying the discrete category $X_0^d$ plays the role of 'set of objects'. By analogy with the category $\catwg{2}$, we require induced Segal maps conditions.
In Lemma \ref{fairtowg2Lem-6} we show that there are semi-category structures
\begin{equation*}
\tenx{X_1}{X^d_0}\rw X_1\rightrightarrows X^d_0\quad \text{and}\quad \tenx{X_{v[1]}}{X^d_0}\rw X_{v[1]}\rightrightarrows X^d_0
\end{equation*}
and thus build a functor $D:\fairwg{2}\rw\fair{2}$ which discretizes $X_0$ to $X_0^d$.

The main property of the category $\fairwg{2}$ is that it arises as strictification of Segalic pseudo-functors from $\zuDop$ to $\Cat$ (see Theorem \ref{fairtowg2The-1}) in a way formally analogous to the way $\catwg{2}$ arises as strictification of Segalic pseudo-functors from $\Dop$ to $\Cat$ (see Theorem \ref{book-st-cat}).

Let $X\in[\zuDop,\Cat]$ be such that $X_0\in\cathd{}$, so that there is a map $\zg:X_0\rw X_0^d$. Let $\eta:[n']\rw [n]$ in $\zuDop$ and $j_i,n_i$ for $i=1,\ldots,t$ as in Definition \ref{segal-def1}. The map $\zg$ induces maps
\begin{equation*}
\begin{split}
    & \mu_j: \pro{X_1}{X_0}{j}\rw \pro{X_1}{X^d_0}{j} \\
    & \wt\mu_j: \pro{X_{v[1]}}{X_0}{j}\rw\pro{X_{v[1]}}{X^d_0}{j}
\end{split}
\end{equation*}
therefore composing with the Segal maps \eqref{segal-eq5.1}  we obtain induced Segal maps
\begin{equation}\label{wgfair-eq1}
\begin{gathered}
\begin{split}
    & X_{\eta}\rw (\pro{X_1}{X^d_0}{j_1})\tiund{X^d_0}(\pro{X_{v[1]}}{X^d_0}{n_1}) \tiund{X^d_0}(\pro{X_1}{X^d_0}{j_2-j_1}) \tiund{X^d_0}\cdot   \\
    & \cdots \tiund{X^d_0}(\pro{X_{v[1]}}{X^d_0}{n_t})\tiund{X^d_0}(\pro{X_1}{X^d_0}{n-j_t})\;.
\end{split}
\end{gathered}
\end{equation}
\begin{definition}\label{fairtowg2Def-4}
The category $\fairwg{2}$ of weakly globular fair $2$-categories is the full subcategory of $\fatcat{\zuDop}{\Cat}$ whose objects $X$ are such that
\begin{itemize}
  \item [a)] $X_0\in\cathd{}$.\mk

  \item [b)] The Segal maps \eqref{segal-eq5.1} are isomorphisms.\mk

  \item [c)] The induced Segal maps  \eqref{wgfair-eq1} are equivalences of categories.\mk

  \item [d)] $X$ preserves colours.
\end{itemize}
\end{definition}

It is clear that $\fair{2}\subset \fairwg{2}$. As in the case of $\fair{2}$ the Segal condition for $X\in\fairwg{2}$ means that the restriction to either copy of $\zDm \subset \zuD$ satisfies the Segal condition. Hence there are semi-categories internal to $\Cat$
\begin{equation*}
\begin{split}
    & X_1\tiund{X_0}X_1 \rw X_1\rightrightarrows X_0 \\
    & X_{v[1]}\tiund{X_0}X_{v[1]}\rw X_{v[1]}\rightrightarrows X_0
\end{split}
\end{equation*}
The Segal condition means that the rest of the diagram can be constructed from the serially commuting diagram
\begin{equation}\label{wgfair-eq01}
\begin{gathered}
\xymatrix@R=40pt@C=40pt{
X_0 & X_1 \ar@<-0.5ex>[l] \ar@<0.5ex>[l] \\
X_{v[1]} \ar[ur] \ar@<-0.5ex>[u] \ar@<0.5ex>[u] &
}
\end{gathered}
\end{equation}
provided that $X_{v[1]}\rw X_1$ is a semi-functor internal to $\Cat$, that is the following diagram commutes
\begin{equation}\label{wgfair-eq02}
\begin{gathered}
\begin{tikzcd}
	{X_{v[1]}\times_{X_0}X_{v[1]}} && {X_{v[1]}\rightrightarrows X_0} \\
	{X_{1}\times_{X_0}X_{1}} && {X_{1}\rightrightarrows X_0}
	\arrow[from=1-1, to=2-1]
	\arrow[shift right=5, from=1-3, to=2-3]
	\arrow[from=1-1, to=1-3]
	\arrow[from=2-1, to=2-3]
	\arrow["{\|}", shift left=5, draw=none, from=1-3, to=2-3]
\end{tikzcd}
\end{gathered}
\end{equation}

The preservation of colours is equivalent to requiring that the maps
\begin{equation}\label{wgfair-eq03}
X_{v[1]}\rightrightarrows X_0,\quad X_{v[1]}\tiund{X_0} X_1\rw X_1 \lw X_1\tiund{X_0}X_{v[1]},\quad X_{v[1]}\tiund{X_0}X_{v[1]}\rw X_{v[1]}\;.
\end{equation}
which are in the images of the map \eqref{fatdelta-eq1} are equivalences of categories.
\begin{remark}\label{wgfair-rem01}
As in the case of $\fair{2}$ (see Remark \ref{fair2catRem-1}) to give a weakly globular fair 2-category $X$ it is enough to give commutative diagram \eqref{wgfair-eq01} with $X_0\in\cathd{}$, a semi-functor of semi-categories internal to $\Cat$ \eqref{wgfair-eq02} such that \eqref{wgfair-eq03} are equivalences of categories.
\end{remark}

\begin{lemma}\label{fairtowg2Lem-6}
  There is a functor $D:\fairwg{2} \rw \fair{2}$ with

 \begin{equation*}
  (DX)_k=
\left\{
  \begin{array}{ll}
    X_0^d, & k=0 \\
    X_1, & k=1 \\
    \pro{X_1}{X_0^d}{k},& k>1\;.
  \end{array}
\right.
\end{equation*}

for each $k\in\zuDopm$. If $X\in\fair{2}$, $DX=X$.
\end{lemma}
\begin{proof}

Define the composite maps (for $k\geq 2$)
\begin{equation}\label{fairtowg2Eq-7}
\begin{split}
   & X_1\xrw{\pt_{i}}X_0\xrw{\zg}X_0^d \\
   & X_1\tiund{X_0^d}X_1 \xrw{\nu_2} X_1\tiund{X_0}X_1\xrw{\pt_i}X_1\\
   & \pro{X_1}{X_0^d}{k+1}\xrw{\nu_{k+1}}\pro{X_1}{X_0}{k+1}\xrw{\pt_i}\\
   & \rw\pro{X_1}{X_0}{k}\xrw{\hmu_k}\pro{X_1}{X_0^d}{k}
\end{split}
\end{equation}
where $\nu_{k+1}$ is pseudo-inverse to the induced Segal map $\hat\mu_{k+1}$.

Since $X\in\fairwg{2}$,
\begin{equation*}
 X_1\tiund{X_0}X_1 \rw X_1\rightrightarrows X_0
\end{equation*}
is a semi-category internal to $\Cat$. Also, $\hmu_k$ is an injective equivalence of categories. Reasoning as in the proof of Theorem \ref{StrongpseThe-1}, the maps \eqref{fairtowg2Eq-7} define a semi-category internal to $\Cat$
\begin{equation*}
 X_1\tiund{X^d_0}X_1 \rw X_1\rightrightarrows X^d_0
\end{equation*}
Let $(DX)_{v[1]}=X_{v[1]}$ and define the composite maps (for $k\geq 2$)
\begin{equation}\label{fairtowg2Eq-8}
\begin{split}
   & X_{v[1]}\xrw{\pt_{i}}X_0\xrw{\zg}X_0^d \\
   & X_{v[1]}\tiund{X_0^d}X_{v[1]} \xrw{\nu_2} X_{v[1]}\tiund{X_0}X_{v[1]}\xrw{\pt_i}X_{v[1]}\\
   & \pro{X_{v[1]}}{X_0^d}{k+1}\xrw{\nu_{k+1}}\pro{X_{v[1]}}{X_0}{k+1}\xrw{\pt_i}\\
   & \rw\pro{X_{v[1]}}{X_0}{k}\xrw{\wt\mu_k}\pro{X_{v[1]}}{X_0^d}{k}
\end{split}
\end{equation}
where $\nu_{k+1}$ is pseudo-inverse to the induced Segal map $\wt\mu_{k+1}$. Since $X\in\fairwg{2}$, there is a semi-category internal to $\Cat$
\begin{equation*}
X_{v[1]}\tiund{X_0}X_{v[1]}\rw X_{v[1]}\rightrightarrows X_0\;.
\end{equation*}
Also, $\hmu_k$ is an injective equivalence. Thus, reasoning as in the proof of Theorem \ref{StrongpseThe-1}, the maps \eqref{fairtowg2Eq-8} define a semi-category internal to $\Cat$.
\begin{equation*}
X_{v[1]}\tiund{X^d_0}X_{v[1]}\rw X_{v[1]}\rightrightarrows X^d_0\;.
\end{equation*}
 Since $X\in\fairwg{2}$, there is a semi-functor internal to $\Cat$
\begin{equation*}
\xymatrix@C=40pt@R=35pt{
 X_{v[1]}\tiund{X_0}X_{v[1]} \ar[r] \ar[d] & X_{v[1]}\ar[d] \ar@<0.5ex>[r] \ar@<-0.5ex>[r] & X_0 \ar@{=}[d] \\
X_1\tiund{X_0}X_1 \ar[r] & X_1 \ar@<0.5ex>[r] \ar@<-0.5ex>[r] & X_0
}
\end{equation*}
Hence the following diagram commutes:
\begin{equation*}
\xymatrix@C=40pt@R=25pt{
X_{v[1]}\tiund{X^d_0}X_{v[1]} \ar[r] \ar[d]_{\nu_2} & X_{v[1]}\ar@{=}[d] \ar@<0.5ex>[r] \ar@<-0.5ex>[r] & X_0 \ar@{=}[d] \ar[r] & X_0^d \ar@{=}[d]\\
X_{v[1]}\tiund{X_0}X_{v[1]} \ar[r] \ar[d]_{} & X_{v[1]}\ar[d] \ar@<0.5ex>[r] \ar@<-0.5ex>[r] & X_0 \ar@{=}[d] \ar[r] & X_0^d \ar@{=}[d]\\
X_1\tiund{X_0}X_1 \ar[r] \ar[d]_{\hmu_2} & X_1 \ar@<0.5ex>[r] \ar@<-0.5ex>[r]\ar@{=}[d] & X_0 \ar[r]\ar@{=}[d] & X_0^d \ar@{=}[d]\\
X_1\tiund{X^d_0}X_1 \ar[r] & X_1 \ar@<0.5ex>[r] \ar@<-0.5ex>[r] & X_0 \ar[r] & X_0^d
}
\end{equation*}
That is, there is a semi-functor internal to $\Cat$
\begin{equation*}
\xymatrix@C=40pt@R=35pt{
 X_{v[1]}\tiund{X^d_0}X_{v[1]} \ar[r] \ar[d] & X_{v[1]}\ar[d] \ar@<0.5ex>[r] \ar@<-0.5ex>[r] & X_0^d \ar@{=}[d] \\
X_1\tiund{X^d_0}X_1 \ar[r] & X_1 \ar@<0.5ex>[r] \ar@<-0.5ex>[r] & X_0^d
}
\end{equation*}
By Remark \ref{fair2catRem-1} to show that $DX\in\fair{2}$ it remains to show that the maps $\xymatrix{X_{v[1]} \ar@<0.5ex>[r] \ar@<-0.5ex>[r]  & X_0^d }$ as well as the composition maps
\begin{equation*}
\xymatrix{
X_{v[1]}\tiund{X^d_0}X_{v[1]} \ar[r] & X_{v[1]}
}
\end{equation*}
\begin{equation*}
\xymatrix{
X_{v[1]}\tiund{X^d_0}X_{1} \ar[r] & X_1 & X_1\tiund{X^d_0}X_{v[1]} \ar[l]
}
\end{equation*}
are equivalences of categories. This follows from the fact that $X\in\fairwg{2}$ and from the commutativity of the diagram
\begin{equation*}
\xymatrix@C=40pt@R=30pt{
X_{v[1]}\tiund{X^d_0}X_{v[1]} \ar[r] \ar[d] & X_{v[1]}\\
X_{v[1]}\tiund{X_0}X_{v[1]} \ar[ur]
}
\end{equation*}
\begin{equation*}
\xymatrix@C=40pt@R=30pt{
X_{v[1]}\tiund{X^d_0}X_{1} \ar[r] \ar[d] & X_{1} & X_1\tiund{X^d_0}X_{v[1]} \ar[d] \ar[l]\\
X_{v[1]}\tiund{X_0}X_{1} \ar[ur] && X_1\tiund{X_0}X_{v[1]} \ar[ul]
}
\end{equation*}
It is straightforward from the definition that if $X\in\fair{2}$, $DX=X$.

\end{proof}

\begin{definition}\label{fairtowg2-def2a1}
There is a truncation functor given as composite
\begin{equation*}
\fairwg{2}\xrw{D}\fair{2}\xrw{\p{1}}\Cat
\end{equation*}
where $\p{1}$ is as in Lemma \ref{level_fwg_0}. Thus if $X\in\fairwg{2}$,
\begin{equation*}
(D\p{1}X)_s=
\left\{
  \begin{array}{ll}
    X_0^d, & s=0 \\
    p X_1, & s=1 \\
    \pro{pX_1}{X_0^d}{s}, & s\geq 2\;.
  \end{array}
\right.
\end{equation*}
\end{definition}

\begin{definition}\label{fairtowg2-def2a}
Given $X\in\fairwg{2}$ and $a,b\in X_0^d$, we denote by $X(a,b)\subset X_1$ the fiber at $(a,b)$ of the map
\begin{equation*}
  X_1\xrw{(\pt_0,\pt_1)} X_0\times X_0 \xrw{\zg\times\zg} X_0^d\times X_0^d\;.
\end{equation*}
A morphism $F:X\rw Y$ in $\fairwg{2}$ is a $2$-equivalence if
\begin{itemize}
  \item [(i)] For all $a,b\in X_0^d$, $F(a,b):X(a,b)\rw Y(Fa,Fb)$ is an equivalence of categories.\mk

  \item [(ii)] $\p{1} D F$ is an equivalence of categories.
\end{itemize}
\end{definition}
\begin{remark}\label{fairtowg2Rem-4a}
Given $X\in\fairwg{2}$ and $a,b\in X_0^d$ we have $X(a,b)=DX(a,b)$. It follows that $D$ sends 2-equivalences to 2-equivalences.
\end{remark}

We have the following analogue for $\fairwg{2}$ of Lemma \ref{level_fwg} for $\fair{2}$.
\begin{lemma}\label{level_fwg_equi}
  Let $F:X\rw Y$ be a morphism in $\fairwg{2}$ which is a levelwise equivalence of categories. Then $F$ is a 2-equivalence.
\end{lemma}
\begin{proof}
Since $F_0$ is an equivalence of categories, $F_0^d$ is an isomorphism. Thus
\begin{equation*}
  Y_1=\uset{a',b'\in Y_0^d}{\cop} Y(a',b')\cong \uset{\parbox{12mm}{\centering $\Sc{Fa,Fb}$\\$\Sc{a,b\in X_0^d}$}}{\cop} Y(Fa,Fb)\;.
\end{equation*}
Since $\ds X_1=\uset{a,b\in X_0^d}{\cop}X(a,b)$ and $F_1$ is an equivalence of categories, it follows that $X(a,b)\rw Y(Fa,Fb)$ is an equivalence of categories for all $a,b\in X_0^d$. Also, since $F_n$ is an equivalence of categories for all $n\in\zDopm$, $pF_n=(D\p{1}F)_n$ is a bijection. Therefore $D\p{1}F$ is an isomorphism. By definition, it follows that $F$ is a $2$-equivalence.
\end{proof}

\begin{remark}\label{fairtowg2Rem-4}
Let ${\pi^*}:\funcat{}{\Cat}\rw[\zuDop,\Cat]$ be as in Definition \ref{fairtowg2Def-1}. We note that this functor restricts to a functor
 \begin{equation*}
   \pi^*: \catwg{2}\rw \fairwg{2}
 \end{equation*}
that is, if $X\in \catwg{2}$, then $\pi^* X\in\fairwg{2}$.
In fact, since $X\in\catwg{2}$ there are semi-categories internal to $\Cat$
\begin{equation*}
\begin{tikzcd}[row sep=8pt]
	{X_{1}\times_{X_0}X_{1}} & {(\pi^* X)_1=X_1} & {X_0=(\pi^* X)_0} \\
	{X_{0}\times_{X_0}X_{0}} & {(\pi^* X)_0=X_0} & {X_0=(\pi^* X)_0} \\
	\arrow["\Id", from=2-1, to=2-2]
	\arrow[from=1-1, to=1-2]
	\arrow[shift right, from=1-2, to=1-3]
	\arrow[shift left, from=1-2, to=1-3]
	\arrow["\Id", shift right=2, from=2-2, to=2-3]
	\arrow["\Id", shift left=2, from=2-2, to=2-3]
\end{tikzcd}
\end{equation*}
By the weak globularity condition, $X_0\in\cathd{}$. The remaining conditions in the definition of $\fairwg{2}$ follow from the induced Segal map condition for $X\in\catwg{2}$.

\end{remark}

\subsection{Weakly globular fair $2$-categories from Segalic pseudofunctors}
In this section we show that weakly globular fair 2-categories arise as strictification of Segalic pseudo-functors from $\zuDop$ to $\Cat$. The proof of this result is formally analogous to the one of Theorem \ref{book-st-cat}.
We will need the following technical observation.

\begin{remark}\label{fairtowg2Rem-5}
It is well known that equivalences of categories have the 2-out-of-3 property. That is, given the commutative diagram in $\Cat$
\begin{equation}\label{fairtowg2Eq-9a}
\begin{gathered}
\xymatrix@R=40pt@C=40pt{
& B \ar^{g}[dr] & \\
A\ar^{f}[ur] \ar_{h}[rr] && C
}
\end{gathered}
\end{equation}

\nid if two of the maps $f,g,h$ are equivalences of categories, so is the third. We note that this property holds even when \eqref{fairtowg2Eq-9a} only pseudo-commutes. In fact, suppose $f$ and $h$ are equivalences of categories then, for each $b,b'\in B$, there are isomorphisms $b\cong fa$ and $b'\cong fa'$; since $f$ and $h$ are fully faithful, we obtain
\begin{equation}\label{fairtowg2Eq-10a}
  B(b,b')=B(fa,fa')=A(a,a')=C(ha,ha')\;.
\end{equation}
On the other hand, since \eqref{fairtowg2Eq-9a} pseudo-commutes, there are isomorphisms $gfa\cong ha$ and $gfa'\cong ha'$. Since $b\cong fa$ and $b'\cong fa'$ we also have isomorphisms $gb\cong gfa$, $gb'\cong gfa'$; in conclusion there are isomorphisms $gb\cong ha$ and $gb'\cong ha'$. It follows that
\begin{equation}\label{fairtowg2Eq-11a}
  C(ha,ha')=C(gb,gb')\;.
\end{equation}
We conclude from \eqref{fairtowg2Eq-10a} and \eqref{fairtowg2Eq-11a} that $B(b,b')=C(gb,gb')$. That is, $g$ is fully faithful.

Let $c\in C$. Since $h$ is essentially subjective on objects, there is an isomorphism $c\cong ha$ for some $a\in A$. Since, from the above, $gb\cong ha$, it follows that $c\cong gb$. Thus $g$ is also essentially surjective on objects. In conclusion, $g$ is an equivalence of categories. The proof in the other cases is similar.
\end{remark}

\begin{theorem}\label{fairtowg2The-1}
The strictification functor
\begin{equation*}
  \St:\Ps\fatcat{\zuDop}{\Cat}\rw \fatcat{\zuDop}{\Cat}
\end{equation*}
restricts to a functor
\begin{equation*}
  \St:\segps\fatcat{\zuDop}{\Cat}\rw \fairwg{2}\;.
\end{equation*}
Further, for each $H\in\segps\fatcat{\zuDop}{\Cat}$ there is a pseudo-natural transformation $\St H\rw H$ whose components are equivalences of categories.
\end{theorem}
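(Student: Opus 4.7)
The plan is to mirror the strategy of Theorem \ref{book-st-cat} (its $\Dop$-analogue), using the bijective-on-objects / fully-faithful factorization $(TUH)_{\uk}\xrw{v_{\uk}}(\St H)_{\uk}\xrw{g_{\uk}} H_{\uk}$ recalled in Section \ref{sbs-strict-psfun}, together with the explicit description of the free algebra $TUH$ given in Lemma \ref{fairtowg2Lem-5}. The pseudo-natural transformation $g:\St H\rw H$, and the fact that each $g_{\uk}$ is an equivalence of categories, are immediate from the general theory of \cite{PW} recalled in Section \ref{sbs-strict-psfun}, so the final clause of the theorem is free. The substance of the proof is therefore to verify that $\St H$ satisfies the four conditions (a)--(d) of Definition \ref{fairtowg2Def-4}.

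Conditions (a) and (d) are comparatively easy. For (a), since $H_0$ is discrete and $g_0:(\St H)_0\rw H_0$ is fully faithful, any category admitting a fully faithful functor into a discrete category is an equivalence relation, so $(\St H)_0\in\cathd{}$. For (d), condition (iii) of Definition \ref{fairtowg2Def-3} says that the images under $H$ of the five vertical arrows in \eqref{fairtowg2Eq-5new} are equivalences of categories; because $g$ is a pseudo-natural transformation whose components are equivalences, the corresponding images under $\St H$ are related to those under $H$ by the pseudo-naturality $2$-cells and the $g_{\uk}$'s, and hence are equivalences themselves.

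The Segal condition (b) is the heart of the argument. Fix $\uk\in\zuDop$ with $\pi(\uk)=k\geq 2$, and write $s=k+n_1+\cdots+n_t$ as in Remark \ref{fairtowg2Rem-3}. Lemma \ref{fairtowg2Lem-5}(b)--(c) gives $(TUH)_{\uk}\cong\pro{(TUH)_{\tilun}}{(TUH)_0}{s}$ together with $h_{\uk}=(h_{\tilun},\ldots,h_{\tilun})$ over $h_0$, while the Segalic assumption on $H$ gives $H_{\uk}\cong\pro{H_{\tilun}}{H_0}{s}$. The key observation is that both classes of morphisms in the factorization (bijective-on-objects, fully-faithful) are stable under iterated pullback over a discrete base: since $H_0$ is discrete, the $s$-fold pullback of the factorization of $h_{\tilun}$ is a bijective-on-objects / fully-faithful factorization of $(h_{\tilun},\ldots,h_{\tilun})$. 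By uniqueness of such factorizations, the middle term identifies with $(\St H)_{\uk}\cong\pro{(\St H)_{\tilun}}{(\St H)_0}{s}$. Condition (c) then follows because $(\St H)_0$ is homotopically discrete, so the discretization $(\St H)_0\rw(\St H)_0^d$ is an equivalence, and the induced Segal map is the $s$-fold iterated pullback (over a discrete object, using (b)) of this equivalence, hence an equivalence of categories.

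The main technical obstacle is the compatibility assertion in (b): that the image factorization of $h_{\uk}$ coincides with the $s$-fold pullback of the factorization of $h_{\tilun}$. This rests critically on $H_0$ being discrete, so that pullbacks behave well, and on the explicit product form $h_{\uk}=(h_{\tilun},\ldots,h_{\tilun})$ from Lemma \ref{fairtowg2Lem-5}(c). This is the precise $\zuDop$-analogue of the step that drives Theorem \ref{book-st-cat}, and it is the point at which the combinatorics of $\zuD$ developed in Section \ref{further-fat} -- in particular the description of $\uk$ as an iterated pushout of copies of $\tilun$ over $[0]$ -- is essential.
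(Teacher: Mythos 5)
Your treatment of the final clause and of conditions (a), (b), (d) of Definition \ref{fairtowg2Def-4} follows the same route as the paper: the bijective-on-objects/fully-faithful factorization of $h$ from \cite{PW}, the identification $h_{\uk}=(h_{\tilun},\ldots,h_{\tilun})$ over $(TUH)_{\uk}\cong\pro{(TUH)_{\tilun}}{(TUH)_0}{s}$ from Lemma \ref{fairtowg2Lem-5}, and the stability of the two factorization classes under the relevant iterated pullbacks, giving $(\St H)_{\uk}\cong\pro{(\St H)_{\tilun}}{(\St H)_0}{s}$; the transfer of condition (d) along the pseudo-naturality squares of the levelwise equivalence $g:\St H\rw H$ is also exactly as in the paper.

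However, your argument for the induced Segal map condition (c) has a genuine gap. Writing $L=\St H$, you claim the induced Segal map $\pro{L_{\tilun}}{L_0}{s}\rw\pro{L_{\tilun}}{L_0^d}{s}$ is an equivalence because $L_0\in\cathd{}$ and the map is ``an iterated pullback over a discrete object of the equivalence $L_0\rw L_0^d$''. It is not: the source is a pullback over the non-discrete $L_0$, and equivalences of categories are only stable under pullback over \emph{discrete} objects. If (c) followed formally from (a) and (b), it would be a redundant axiom both in Definition \ref{fairtowg2Def-4} and in Definition \ref{wg-doubcat-def-4}; it is not. For a counterexample to your reasoning, take $X_0$ the indiscrete category on two objects (so $X_0\in\cathd{}$ with $X_0^d$ a point) and $X_1$ its discrete subcategory of objects with both faces the inclusion: then $X_1\tiund{X_0}X_1$ has two objects while $X_1\tiund{X_0^d}X_1$ has four, so the induced Segal map is not essentially surjective. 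The paper's proof of (c) uses the one ingredient your argument omits, namely the comparison with $H$ itself: since $g$ is a levelwise equivalence, $H_{\tilun}\simeq L_{\tilun}$, and since $H_0$ is genuinely discrete and $H$ is Segalic,
\begin{equation*}
\pro{L_{\tilun}}{L_0}{s}\cong L_{\uk}\simeq H_{\uk}\cong\pro{H_{\tilun}}{H_0}{s}\simeq\pro{L_{\tilun}}{H_0}{s}\cong\pro{L_{\tilun}}{L_0^d}{s}\;,
\end{equation*}
where the middle equivalence of pullbacks is legitimate precisely because the base $H_0$ is discrete. You need to route the argument through $H$ in this way rather than arguing intrinsically on $L$.
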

\begin{proof}
From \cite{PW}, to construct the strictification $L=\St H$ of the pseudo-functor $H$ we need to factorize $h:TUH\rw UH$ as $h=gv$ in such a way that, for each $\eta\in\zuDop$, $h_{\eta}$ factorizes as
\begin{equation*}
  (TUH)_{\eta}\xrw{v_{\eta}} L_{\eta}\xrw{g_{\eta}}(UH)_{\eta}=H_{\eta}
\end{equation*}
with $v_{\eta}$ bijective on objects and $g_{\eta}$ fully and faithful. As explained in \cite{PW}, $g_{\eta}$ is in fact an equivalence of categories.

Since the bijective on objects and the fully faithful functors form a factorization system in $\Cat$, the commutativity of \eqref{segpseu-eq1a} in Lemma \ref{segpseu-lem1} implies that there are functors
\begin{equation*}
\ovl{d_i}:L_1\rightrightarrows L_0, \qquad \ovl{d'_i}:L_{v[1]}\rightrightarrows L_0\quad i=0,1
\end{equation*}
such that the following diagrams commute:
\begin{equation*}
\xymatrix{
(TUH)_1 \ar[r] \ar@<-1ex>_{\pt'_0}[d]\ar@<1ex>^{\pt'_1}[d] & L_1 \ar[r]\ar@<-1ex>_{\ovl{d}_0}[d]\ar@<1ex>^{\ovl{d}_1}[d] & H_1 \ar@<-1ex>_{\pt_0}[d]\ar@<1ex>^{\pt_1}[d]\\
(TUH)_0\ar[r] & L_0 \ar[r] & H_0
}
\qquad
\xymatrix{
(TUH)_{v[1]} \ar[r] \ar@<-1ex>_{\ptt_0}[d]\ar@<1ex>^{\ptt_1}[d] & L_{v[1]} \ar[r]\ar@<-1ex>_{\ovl{d}'_0}[d]\ar@<1ex>^{\ovl{d}'
_1}[d] & H_{v[1]} \ar@<-1ex>_{\ptt_0}[d]\ar@<1ex>^{\ptt_1}[d]\\
(TUH)_0\ar[r] & L_0 \ar[r] & H_0
}
\end{equation*}
By Lemma \ref{segpseu-lem1}, $h_{\eta}$ factorizes as
\begin{equation*}
\begin{split}
    & (TUH)_{\eta}\cong (TUH)^{j_1}_{1}\tiund{(TUH)_0} (TUH)^{n_1}_{1}\tiund{(TUH)_0} (TUH)^{j_2-j_1}\tiund{(TUH)_0}\cdots\\
    &\cdots \tiund{(TUH)_0}(TUH)^{n-j_t}\xrw{\pro{v_1}{v_0}{}} L_1^{j_1}\tiund{L_0} L_{v[1]}^{n_1}\tiund{L_0} L_1^{j_2-j_1}\tiund{L_0}\cdots \tiund{L_0}L_1^{n-j_t}\\
    & \xrw{\pro{g_1}{g_0}{}} H_1^{j_1}\tiund{H_0} H_{v[1]}^{n_1}\tiund{H_0} H_1^{j_2-j_1}\tiund{H_0}\cdots \tiund{H_0}H_1^{n-j_t}
\end{split}
\end{equation*}
where we denoted%
\begin{equation*}
\begin{split}
    &  L_1^k=\pro{L_1}{L_0}{k},\qquad L_{v[1]}^k=\pro{L_{v[1]}}{L_0}{k}\;,\\
    & H_1^k=\pro{H_1}{H_0}{k},\qquad H_{v[1]}^k=\pro{H_{v[1]}}{H_0}{k}\:.
\end{split}
\end{equation*}
Since $v_1$ and $v_0$ are bijective on objects, such is $\pro{v_1}{v_0}{}$. Since $g_1$ and $g_0$ are fully faithful such is $\pro{g_1}{g_0}{}$. So the above is the required factorization of $g_{\eta}$ and we conclude that
\begin{equation*}
L_{\eta}\cong L_1^{j_1}\tiund{L_0}L_{v[1]}^{n_1}\tiund{L_0}L_1^{j_2-j_1}\tiund{L_0}\cdots \tiund{L_0} L_1^{n-j_t}\;.
\end{equation*}
That is all the Segal maps of $L$ are isomorphisms.

By \cite{PW}, $g:L\rw H$ is a pseudo-natural transformation with $g_{\eta}$ an equivalence of categories for all $\eta\in\zuDop$. In particular there are equivalences of categories
\begin{equation*}
  H_1\simeq L_1, \qquad H_{v[1]}\simeq L_{v[1]}\qquad H_0\simeq L_0\;.
\end{equation*}
Since $H_0$ is discrete, this implies
\begin{equation*}
\begin{split}
    & H_{\eta}\cong H_1^{j_1}\tiund{H_0} H_{v[1]}^{n_1}\tiund{H_0} H_1^{j_2-j_1}\tiund{H_0}\cdots \tiund{H_0}H_1^{n-j_t}\simeq \\
    & \simeq L_1^{j_1}(H_0)\tiund{H_0} L_{v[1]}^{n_1}(H_0)\tiund{H_0} L_1^{j_2-j_1}(H_0)\tiund{H_0}\cdots \tiund{H_0}L_1^{n-j_t}(H_0)\;,
\end{split}
\end{equation*}
where we denoted
\begin{equation*}
L^k_1(H_0)=\pro{L_1}{H_0}{k}\qquad L^k_{v[1]}(H_0)=\pro{L_{v[1]}}{H_0}{k}\;.
\end{equation*}
On the other hand
\begin{equation*}
H_{\eta}\simeq L_{\eta}\cong L_1^{j_1}\tiund{L_0} L_{v[1]}^{n_1}\tiund{L_0} L_1^{j_2-j_1}\tiund{L_0}\cdots \tiund{L_0}L_1^{n-j_t}\;.
\end{equation*}
In conclusion, since $H_0\cong L_0^d$, we obtain an equivalence of categories
\begin{equation*}
\begin{split}
    & L_1^{j_1}\tiund{L_0} L_{v[1]}^{n_1}\tiund{L_0} L_1^{j_2-j_1}\tiund{L_0}\cdots \tiund{L_0}L_1^{n-j_t}\cong \\
  \cong  & L_{\eta}\simeq H_{\eta} \simeq L_1^{j_1}(H_0)\tiund{H_0} L_{v[1]}^{n_1}(H_0)\tiund{H_0} L_1^{j_2-j_1}(H_0)\tiund{H_0}\cdots \tiund{H_0}L_1^{n-j_t}(H_0)\cong \\
  \cong  & L_1^{j_1}(L_0^d)\tiund{L^d_0} L_{v[1]}^{n_1}(L_0^d)\tiund{L^d_0} \cdots \tiund{L^d_0}L_1^{n-j_t}(L_0^d)\;.
\end{split}
\end{equation*}
This means that all the induced Segal maps of $L$ are equivalences of categories.

Since $L\rw H$ is a pseudo-natural transformation, from the above the following diagrams pseudo-commute:
\begin{equation*}
\xymatrix@C=40pt@R=35pt{
L_{v[1]}\ar@<0.5ex>[r] \ar@<-0.5ex>[r] \ar[d] & L_0 \ar[d]\\
H_{v[1]}\ar@<0.5ex>[r] \ar@<-0.5ex>[r] & H_0
}
\qquad \quad
\xymatrix@C=40pt@R=35pt{
L_{v[1]}\tiund{L_0}L_{v[1]} \ar[r] \ar[d] & L_{v[1]} \ar[d] \\
H_{v[1]}\tiund{H_0}H_{v[1]} \ar[r] & H_{v[1]}
}
\end{equation*}
\begin{equation*}
\xymatrix@C=40pt@R=35pt{
L_{v[1]}\tiund{L_0}L_{1} \ar[r] \ar[d] & L_1 \ar[d] &L_1\tiund{L_0}L_{v[1]} \ar[l] \ar[d]\\
H_{v[1]}\tiund{H_0}H_{1} \ar[r]  & H_1  &H_1\tiund{H_0}L_{v[1]} \ar[l]
}
\end{equation*}
Since $H\in\segps\fatcat{\zuDop}{\Cat}$, the bottom maps are equivalences of categories. The vertical maps are also equivalences of categories since $L\rw H$ is a levelwise equivalence. By Remark \ref{fairtowg2Rem-5} it follows that the top maps are also equivalences of categories. By Remark \ref{wgfair-rem01} this completes the proof that $L\in\fairwg{2}$.
\end{proof}
%

\section{From fair $2$-categories to weakly globular double categories}\label{fairtowg2}
In this Section we construct the functor $R_2:\fair{2}\rw\catwg{2}.$ This category factors through the category of Segalic pseudo-functors as follows
\begin{equation*}
\fair{2}\xrw{T_2}\segpsc{}{\Cat}\xrw{St}\catwg{2}\;,
\end{equation*}
where $St$ is as in Theorem \ref{book-st-cat}. The main goal of this section is the construction of the functor $T_2$. We first discuss a general set up and method to construct pseudo-functors which we will then apply to our specific case.
\begin{proposition}\label{fairtowg2-pro1}
Let $\clC,\clD$ be categories such that $\ob\clD\subset \ob\clC$ and suppose there are functors
\begin{equation*}
\xymatrix{
\clC \ar^{F}[r]\ar_{\pi}[d] & \Cat\\
\clD
}
\end{equation*}
Suppose that, for each $C\in\clC$, there is a map in $\clC$
\begin{equation*}
  \nu_C:C\rw\pi(C)
\end{equation*}
and suppose we are given specified adjoint equivalences of categories
\begin{equation*}
  \zb_C:F(\pi(C))\leftrightarrows F(C):\za_C=F(\nu_C)
\end{equation*}
with $\za_C\zb_C=\Id$. Then
\begin{itemize}
  \item [a)] There is a pseudo-functor $G:\clC\rw\Cat$ given on objects by $G(C)=F(\pi(C))$ and there are pseudo-natural transformations $\za:F\rw G$ and $\zb:G\rw F$ with components $\za_C,\zb_C$ respectively.

\nid Further, given another functor $F':\clC\rw\Cat$ and a natural transformation $\mu:F\rw F'$, there is a pseudo-natural transformation $\xi:G\rw G'$ between the corresponding pseudo-functors. If $\mu$ is componentwise an equivalence of categories, such is $\xi$.\mk

  \item [b)] Suppose, further, that the following two conditions are satisfied:\mk
  \begin{itemize}
    \item [i)] If $N:\Cat\rw\funcat{}{\Set}$ denotes the nerve functor, the map in $\funcat{}{\Set}$ \;$N\pi:N\clC\rw N\clD$ is levelwise surjective.\mk

    \item [ii)] Given maps  $f_1:C_1\rw C'_1$ and $f_2:C_2\rw C'_2$ in $\clC$ such that $\pi f_1=\pi f_2$, then
   \begin{equation*}
    \za_{C'_1}F(f_1)\zb_{C_{1}}=\za_{C'_2}F(f_2)\zb_{C_2}\;.
   \end{equation*}
  \end{itemize}

\nid Then there is a pseudo-functor $\wt F\in\Ps[\clD,\Cat]$ given on objects by $F(\pi(C))$ for each $\pi(C)\in\ob\clD$.

\nid Further, given another functor $F':\clC\rw\Cat$ and a natural transformation $\mu:F\rw F'$, there is a pseudo.natural transformation $\xi:\wt F\rw \wt F'$ between the corresponding pseudo-functors. If $\mu$ is componentwise an equivalence of categories, such is $\xi$.\mk

\item[c)] Under the hypotheses of a) and b), if $\pi^*:\Ps[\clD,\Cat]\rw\Ps[\clC,\Cat]$ is induced by $\pi$, it is $\pi^*\wt F=G$ and there are levelwise equivalence pseudo-natural transformations in $\Ps[\clC,\Cat]$, $\pi^*\wt F\rw F$ and $F\rw \pi^*\wt F$.

\end{itemize}
\end{proposition}
\begin{proof}\
\begin{itemize}
  \item [a)] We apply Lemma \ref{lem-PP} with $G(C)=F(\pi(C))$ and equivalences of categories $\zb_C,\,\za_C$. Given $f:C\rw D$ in $\clC$, $G(f)$ is given by the composite
      \begin{equation*}
        F(\pi(C))\xrw{\zb_C}F(C)\xrw{F(f)}F(D)\xrw{\za_D}F(\pi(D))\;.
      \end{equation*}
      The 2-dimensional structure is as in Lemma \ref{lem-PP}. Note that the 2-cell
    \begin{equation*}
    \xymatrix{
    G(C)  \ar@/_2.7pc/[rr]_{\Id_{G(C)}} \ar^{G(\Id_C)}[rr]  & \ar@{}|{\big\Downarrow}[d] & G(C)\\
    & \ &
    }
    \end{equation*}
    is the identity since by hypothesis $\za_C\zb_C=\Id$.

The existence of pseudo-natural transformations $\za:F\rw G$ and $\zb:G\rw F$ follows from Lemma \ref{lem-PP}. To build the pseudo-natural transformation $\xi:G\rw G'$ we apply Lemma \ref{lem-PP} b). We let $\xi_C:G(C)\rw G'(C)$ be
\begin{equation*}
\xi_C=\za_{C'}\mu_C\zb_C\;.
\end{equation*}
Then $\xi_C\za_C=(\za_{C'}\mu_C\zb_C)\za_C\cong \za_{C'}\mu_C$. Therefore hypothesis (3) of Lemma \ref{lem-PP} is satisfied and there is a pseudo-natural transformation $\xi:G\rw G'$ with $\xi(C)=\xi_{C}$.

Since $\za_{C'}$ and $\zb_C$ are equivalences of categories, if $\mu_C$ is an equivalence of categories, such is $\xi_C$.\mk

  \item [b)] Since $\pi$ is surjective on objects, every object of $\clD$ has the form $\pi(C)$ for some $C\in\clC$ and we define $\wt F(\pi(C))=F(\pi(C))=G(C)$ with $G$ as in a).

      Given a map $f:\pi(C)\rw\pi(D)$ in $\clD$, by hypothesis b) i) the map of sets $(N\pi)_1:(N\clC)_1\rw (N\clD)_1$ is a surjection, thus there is a map $f':C'\rw D'$ in $\clC$ with $\pi f'=f$. We define $\wt F(f)$ to be the composite
      \begin{equation*}
        F(\pi(C))=F(\pi(C'))\xrw{\zb_{C'}}F(C')\xrw{F(f')}F(D')\xrw{\za_{D'}}F(\pi(D'))=F(\pi(D))\;.
      \end{equation*}
      Thus $\wt F(f)=G(f')$ with $G$ as in a).
      By hypothesis b) ii) this is well-defined, that is it is independent on the map $f'$ in $\clC$ with $\pi(f')=f$.

      Given composable morphisms in $\clD$
      \begin{equation*}
        \pi(C)\xrw{f}\pi(D)\xrw{g}\pi(E)
      \end{equation*}
      by hypothesis b) i) the map of sets $(N\pi)_2:(N\clC)_2\rw (N\clD)_2$ is a surjection, thus there exist composable morphisms in $\clC$
      \begin{equation*}
        C''\xrw{f''}D''\xrw{g''}E''
      \end{equation*}
      such that $\pi(f'')=f$ and $\pi(g'')=g$. Then $\wt F(gf)$ is given by the composite
      \begin{equation*}
        F(\pi(C))=F(\pi(C''))\xrw{\zb_{C''}}F(C'')\xrw{g''f''}F(E'')\xrw{\za_{E''}}F(\pi(E''))=F(\pi(E))\;.
      \end{equation*}
The 2-cell
\begin{equation*}
\xymatrix{
\wt F(\pi(C)) \ar@/_2.7pc/[rrrr]_{\wt F(gf)} \ar^{\wt F(f)}[rr] && \wt F(\pi(D)) \ar@{}[d]|{\big\Downarrow} \ar^{\wt F(g)}[rr] && \wt F(\pi(E))\\
&& \
}
\end{equation*}
is the same as the 2-cell for the pseudo-functor $G$
\begin{equation*}
\xymatrix@C=18pt{
G(C)\!=\!\wt F(\pi(C)) \ar@/_3pc/[rrrr]_{G(g'' f'')} \ar^{G(f'')}[rr] && G(D)\!=\! \wt F(\pi(D)) \ar@{}[d]|{\big\Downarrow}  \ar^{G(g'')}[rr] && G(E)\!=\!\wt F(\pi(E))\\
&& \
}
\end{equation*}
(note that, from above, $\wt F(f)=G(f')=G(f'')$ since $\pi(f')=\pi(f'')=f$).

The 2-cell
\begin{equation}\label{fairtowg2-eq1}
\begin{gathered}
\xymatrix{
 \wt F(\pi(C))  \ar@/_2.7pc/[rr]_{\Id_{\wt F(\pi(C))}} \ar^{\wt F(\Id_{\pi(C)})}[rr]  & \ar@{}|{\big\Downarrow}[d] & \wt F(\pi(C))\\
 & \ &
 }
\end{gathered}
\end{equation}
is the identity.

Given maps in $\clD$, $\pi(C)\xrw{f}\pi(D)\xrw{g}\pi(E)\xrw{h}\pi(F)$, by hypothesis b) i) the map of sets $(N\pi)_3:(N\clC)_3\rw (N\clD)_3$ is a surjection, thus there are maps in $\clC$,
\begin{equation*}
C'''\xrw{f'''}D'''\xrw{g'''}E'''\xrw{h'''}F'''
\end{equation*}
with $\pi(f''')=f$, $\pi(g''')=g$, $\pi(h''')=h$. By construction,
\begin{equation*}
\begin{split}
    & \wt F(hgf)=G(h'''g'''f''') \\
    & \wt F(hg)\wt F(f) = G(h'''g''')G(f''') \\
    & \wt F(h)\wt F(gf)=G(h''')G(g'''f''')\;.
\end{split}
\end{equation*}
Also by construction, the 2-cells
\begin{equation*}
\begin{split}
    & \wt F(h)\wt F(gf)\Rw \wt F(hgf) \Lw \wt F(hg)\wt F(f)\\
    & \wt F(g)\wt F(f)\Rw \wt F(gf) \\
    & \wt F(h)\wt F(g)\Rw \wt F(hg)
\end{split}
\end{equation*}
are the same as the ones for the pseudo-functor $G$. Therefore, since $G$ is a pseudo-functor, we have the coherence diagram:
\begin{equation*}
\begin{tikzcd}[column sep=large, row sep=large]
\wt F(A) \arrow[rr,"\wt F(hgf)", ""{name=UP}] \arrow[ddrr,"\wt F(gf)", ""{name=DR}] \arrow[phantom, "{\Uparrow}", from=DR, to=UP, shift right=1cm, yshift=-2mm] \arrow[dd,"\wt F(f)"'] && \wt F(D)\\
\\
\wt F(B) \arrow[rr,"\wt F(g)"',""name=BT] \arrow[phantom, "{\Uparrow}", from=BT, to=DR, shift left=1cm, yshift=2mm] && \wt F(C) \arrow[uu,"\wt F(h)"']
\end{tikzcd}
\begin{tikzcd}
  \arrow[d, phantom,"\equiv"] \\
 \
\end{tikzcd}
\begin{tikzcd}[column sep=large, row sep=large]
\wt F(A) \arrow[rr,"\wt F(hgf)", ""{name=UP}]  \arrow[phantom, "{\Uparrow}", from=DR, to=UP, shift right=1cm, yshift=-12mm] \arrow[dd,"\wt F(f)"'] &&  \wt F(D)\\
\\
\wt F(B) \arrow[uurr,"", ""{name=UR}] \arrow[rr,"\wt F(g)"',""name=BT] \arrow[phantom, "{\Uparrow}", from=UR, to=UP, shift left=1.cm, yshift=-4mm] && \wt F(C) \arrow[uu,"\wt F(h)"']
\end{tikzcd}
\end{equation*}
Since the 2-cell \eqref{fairtowg2-eq1} is the identity, there are no further coherence axioms to check. In conclusion,
\begin{equation*}
  \wt F\in\Ps[\clD,\Cat]\;.
\end{equation*}
Since by construction $\wt F(\pi(C))=G(C)$, $\wt F(f)=G(f')$, $\wt F'(\pi(C))=G'(C)$, $\wt F'(f)=G'(f')$ and the 2-dimensional structure of the pseudo-functors $\wt F$ and $\wt F'$ is as the one of $G$ and $G'$ respectively, there is a pseudo-natural transformation $\xi:\wt F\rw\wt F'$ given by $\xi:G\rw G'$. As in part a), if $\mu$ is a componentwise equivalence of categories, such is $\xi$.\mk

\item[c)] By construction, for each $C\in\clC$, $f:C\rw C'$ in $\clC$, $(\pi^*\wt F)(C)=\wt F(\pi(C))= F(\pi(C))=G(C)$ and $(\pi^*\wt F)(f)=\wt F(\pi(f))=\za_{C'}F(f)\zb_C=G(f)$.

\nid    Similarly, the 2-dimensional structures of $\pi^*\wt F$ and $G$ coincide. In conclusion, $\pi^*\wt F=G$.

\nid    The natural transformations $\pi^*\wt F=G\rw F$ and $F\rw \pi^*\wt F=G$ are as in part a) and are levelwise equivalences of categories.

\end{itemize}

\end{proof}
\begin{remark}\label{fairtowg2-rem1}
By Remark \ref{sbs-trans-rem1}, a different choice of adjoint equivalences of categories $\za_{C},\zb_{C}$ in Proposition \ref{fairtowg2-pro1} would yield an equivalent pseudo-functor $G$ in the 2-category $[\clC,\Cat]$ and an equivalent pseudo-functor $\wt F$ in the 2-category $[\clD,\Cat]$.
\end{remark}
To build the functor $T_2$ in the next theorem, we are going to apply the previous proposition to the case where $F=X\in\fair{2}$ and $\pi:\zuD\rw\zD$ is as in Section \ref{fatd}. We treat some preliminaries in the following remarks and lemma.
\begin{remark}\label{fairtowg2-rem2}
Let $X\in\fair{2}$ and denote, as in Section \ref{fair2cats}, $X_0=\clO$, $X_1=\clA$, $X_{v[1]}=\clU$. Recall that the two maps $\clU\rightrightarrows\clO$ coincide and are equivalence of categories. Let denote these maps by $\zg:\clU\rw\clO$. Throughout this section we fix a choice of pseudo-inverse $\zg':\clO\rw\clU$ so that $\zg\zg'=\Id$ (since $\clO$ is a discrete category).
\end{remark}
\begin{lemma}\label{fairtowg2-lem01}
Let $f:\eta_1\rw\eta_2$ be a coloured arrow in $\zuD$ as follows
\begin{equation}\label{fairtowg2-eq1-1}
\begin{gathered}
\xymatrix{
[n'] \ar@{^{(}->}^{\zve}[r]\ar_{\eta_1}[d] & [n'']\ar^{\eta_2}[d]\\
[n]\ar_{\Id_{[n]}}[r] & [n]
}
\end{gathered}
\end{equation}
Let $j_i,t_i$ ($i=1,\ldots,t$) be as in Definition \ref{segal-def1} for $\eta_1$ so that
\begin{equation*}
\eta_1=\Id_{[j_1]}\dotp v[n_1]\dotp\Id_{[j_2-j_1]}\dotp v[n_2]\dotp\cdots\dotp\Id_{[n-j_t]}\;.
\end{equation*}
Since $\pi(f)=\Id_n$ it is
\begin{equation*}
\eta_2=\Id_{[j_1]}\dotp v[m_1]\dotp\Id_{[j_2-j_1]}\dotp v[m_2]\dotp\cdots\dotp\Id_{[n-j_t]}\;.
\end{equation*}
where $n_i\leq m_i$ and $m_i=|\eta_2^{-1}(j_i)-1|$ for $i=1,\ldots,t$.

Denote $\zg\up{j}=\pro{\zg}{\Id_{\clO}}{j}$ and ${\zg'}\up{j}=\pro{\zg'}{\Id_{\clO}}{j}$. Let $X\in\fair{2}$ so that $X(f^{op}):X_{\eta_2}\rw X_{\eta_1}$ is an equivalence of categories. Define
\begin{equation}\label{fairtowg2-eq5}
\zb_{f^{op}}=\Id_{\clA\up{j_1}}\tiund{\clO}{\zg'}\up{m_1}\zg\up{n_1}\tiund{\clO} \Id_{\clA\up{j_2-j_1}}\tiund{\clO}{\zg'}\up{m_2}\zg\up{n_2}\tiund{\clO} \cdots\tiund{\clO}\Id_{\clA\up{n-j_t}}\;,
\end{equation}
where we denoted $\clA^{(j)}=\pro{\clA}{\clO}{j}$.
Then $\zb_{f^{op}}$ is a pseudo-inverse for $X(f^{op})$.
\end{lemma}
\begin{proof}
From the expressions of $\eta_1$ and $\eta_2$, it is
\begin{equation*}
\begin{split}
    & X_{\eta_1}=\clA\up{j_1}\tiund{\clO}\clU\up{n_1}\tiund{\clO} \clA\up{j_2-j_1}\tiund{\clO}\clU\up{n_2}\tiund{\clO}\cdots \tiund{\clO}\clA\up{n-j_t}   \\
    & X_{\eta_2}=\clA\up{j_1}\tiund{\clO}\clU\up{m_1}\tiund{\clO} \clA\up{j_2-j_1}\tiund{\clO}\clU\up{m_2}\tiund{\clO}\cdots \tiund{\clO}\clA\up{n-j_t}
\end{split}
\end{equation*}
where we denoted
\begin{equation*}
\clA\up{j}=\pro{\clA}{\clO}{j},\qquad \clU\up{j}=\pro{\clU}{\clO}{j}\;.
\end{equation*}
For each $i=1,\ldots,t$ the map $X(f^{op})$ restricts to a map
\begin{equation*}
X(f^{op})_{|m_i}:\clU\up{m_i}\rw\clU\up{n_i}
\end{equation*}
making the following diagram commute
\begin{equation*}
\xymatrix@C=40pt@R=30pt{
\clU\up{m_i}\ar^{X(f^{op})_{|m_i}}[r]\ar_{\zg\up{m_i}}[d] & \clU\up{n_i}\ar^{\zg\up{n_i}}[d] \\
\clO\up{m_i}=\clO \ar_{\Id}[r] & \clO=\clO\up{n_i}
}
\end{equation*}
where we denoted $\zg\up{j}=\pro{\zg}{\Id_{\clO}}{j}$. It follows that
\begin{equation}\label{fairtowg2-eq2}
{\zg'}\up{m_i}\zg\up{n_i}X(f^{op})_{|m_i}\cong {\zg'}\up{m_i}\zg\up{m_i}\cong \Id\;.
\end{equation}
Using the fact that $X(f^{op})_{|m_i}$, when restricted to $\pro{\zg'(\clO)}{\clO}{m_i}\cong \zg'(\clO)$ becomes $\Id_{\zg'(\clO)}$, we have that
\begin{equation}\label{fairtowg2-eq3}
X(f^{op})_{|m_i}{\zg'}\up{m_i}={\zg'}\up{n_i}
\end{equation}
and therefore
\begin{equation}\label{fairtowg2-eq4}
X(f^{op})_{|m_i}{\zg'}\up{m_i}\zg\up{n_i}={\zg'}\up{n_i}\zg\up{n_i}\cong\Id\;.
\end{equation}
Hence \eqref{fairtowg2-eq2} and \eqref{fairtowg2-eq4} show that ${\zg'}\up{m_i}\zg\up{n_i}$ is pseudo-inverse for $X(f^{op})_{|m_i}$.
Since
\begin{equation*}
X(f^{op})=\Id_{\clA^{(j_1)}}\tiund{\clO}X(f^{op})_{|_{m_1}}\tiund{\clO}\Id_{\clA^{(j_2-j_1)}} \tiund{\clO}\cdots \tiund{\clO}\Id_{\clA^{(n-j_t)}}
\end{equation*}
from the definition of $\zb_{f^{op}}$ in \eqref{fairtowg2-eq5}, this implies that $\zb_{f^{op}}$ is pseudo-inverse for $X(f^{op})$.
\end{proof}
\begin{remark}\label{fairtowg2-rem3}
Let $f:\eta_1\rw\eta_2$ be a coloured arrow in $\zuD$ as in Lemma \ref{fairtowg2-lem01} and suppose that $\eta_1=\Id_{[n]}$. Let $X\in\fair{2}$ then by \eqref{fairtowg2-eq5}
\begin{equation*}
\zb_{f^{op}}=\Id_{\clA\up{j_1}}\tiund{\clO}{\zg'}\up{m_1}\tiund{\clO}\Id_{\clA\up{j_2-j_1}} \tiund{\clO}\cdots\tiund{\clO}\Id_{\clA\up{n-j_t}}\;.
\end{equation*}
Since $X(f^{op})_{|m_i}{\zg'}\up{m_i}=\Id_{\clO}$ for $i=1,\ldots,t$ it follows that
\begin{equation}\label{fairtowg2-eq6}
X(f^{op})\zb_{f^{op}}=\Id\;.
\end{equation}
\end{remark}
\begin{lemma}\label{fairtowg2-lem02}
Let $f:\eta_1\rw\eta_2$ be the coloured arrow in $\zuD$ as in \eqref{fairtowg2-eq1-1} and let $X\in\fair{2}$. Let $\zb_{f^{op}}:X_{\eta_1}\rw X_{\eta_2}$ be as in \eqref{fairtowg2-eq5}. Let $\nu_{\eta_1}:\Id_{[n]}\rw\eta_1$ be the coloured arrow in $\zuD$ as in Definition \ref{fatdels-def2} and let $\zb_{\nu_{\eta_1}^{op}}:X_n\rw X_{\eta_1}$ be the pseudo-inverse to $X(\nu_{\eta_1}^{op})$ constructed as in Remark \ref{fairtowg2-rem2}. Then the following diagram commutes
\begin{equation*}
\xymatrix@C=40pt{
X_n \ar@/_2.7pc/[rrr]_{\zb_{\nu_{\eta_1}^{op}}} \ar^{\zb_{\nu_{\eta_1}^{op}}}[r] & X_{\eta_1}\ar^{\zb_{f^{op}}}[r] & X_{\eta_2}\ar^{X({f^{op}})}[r] & X_{\eta_1}
}
\end{equation*}
\end{lemma}
\begin{proof}
By Remark \ref{fairtowg2-rem2}
\begin{equation*}
\zb_{f^{op}}=\Id_{\clA\up{j_1}}\tiund{\clO}{\zg'}\up{m_1}{\zg}\up{n_1}\tiund{\clO} \Id_{\clA\up{j_2-j_1}} \tiund{\clO}\cdots\tiund{\clO}\Id_{\clA\up{n-j_t}}\;.
\end{equation*}
By Remark \ref{fairtowg2-rem3}
\begin{equation*}
\zb_{\nu_{\eta_1}^{op}}=\Id_{\clA\up{j_1}}\tiund{\clO}{\zg'}\up{n_1}\tiund{\clO}\Id_{\clA\up{j_2-j_1}} \tiund{\clO}\cdots\tiund{\clO}\Id_{\clA\up{n-j_t}}\;.
\end{equation*}
Since $\zg\up{n_i}{\zg'}\up{n_i}=\Id$ for all $i=1,\ldots,t$ it follows that
\begin{equation}\label{fairtowg2-eq7}
\zb_{f^{op}}\zb_{\nu_{\eta_1}^{op}}=\Id_{\clA\up{j_1}}\tiund{\clO}{\zg'}\up{m_1} \tiund{\clO}\Id_{\clA\up{j_2-j_1}}\tiund{\clO}\cdots\tiund{\clO}\Id_{\clA\up{n-j_t}}\;.
\end{equation}
Then \eqref{fairtowg2-eq3} and \eqref{fairtowg2-eq7} imply
\begin{equation*}
X(f^{op})\zb_{f^{op}}\zb_{\nu_{\eta_1}^{op}}=\Id_{\clA\up{j_1}}\tiund{\clO}{\zg'}\up{n_1} \tiund{\clO}\Id_{\clA\up{j_2-j_1}}\tiund{\clO}\cdots\tiund{\clO}\Id_{\clA\up{n-j_t}} = \zb_{\nu_{\eta_1}^{op}}\;.
\end{equation*}
as required.
\end{proof}
\begin{remark}\label{fairtowg2-rem4}
Note that \eqref{fairtowg2-eq7} also means that $\zb_{f^{op}}\zb_{\nu_{\eta_1}^{op}}=\zb_{\nu_{\eta_2}^{op}}$
\end{remark}
\begin{theorem}\label{fairtowg2-the1}
There is a functor
\begin{equation*}
T_2:\fair{2}\rw \segpsc{}{\Cat}
\end{equation*}
given on objects by
\begin{equation*}
(T_2 X)_n=
\left\{
  \begin{array}{ll}
    X_0, & n=0 \\
    X_1, & n=1 \\
    \pro{X_1}{X_0}{n}, & n>1\;.
  \end{array}
\right.
\end{equation*}
Further $T_2$ preserves levelwise equivalences of categories and there is a levelwise equivalence pseudo-natural transformation in $\Ps[\zuDop,\Cat]$
\begin{equation*}
  \pi^* T_2 X\rw X\;.
\end{equation*}
\end{theorem}
\begin{proof}
We apply Proposition \ref{fairtowg2-pro1} to the case when $\clC=\zuDop$, $\clD=\zD$, $\pi=\zuDop\rw\Dop$ as in Section \ref{fatdelta_first} and $F=X\in\fair{2}\subset\funcat{}{\Cat}$
with the following equivalences of categories between $X_{\eta}$ and $X_{\pi(\eta)}=X_n$ for $\eta:[n']\rw [n]$ in $\zuDop$. The map $\nu_{\eta}^{op}:\eta\rw[n]$ as in Definition \ref{fatdels-def2} is a coloured arrow. Hence the map
\begin{equation*}
  \za_{\eta}=X(\nu_{\eta}^{op}):X_{\eta}\rw X_{n}
\end{equation*}
is an equivalence of categories. Denote by $\zb_{\eta}=\zb_{\nu_{\eta}^{op}}$ the pseudo-inverse to $\za_{\eta}$ as in Lemma \ref{fairtowg2-lem01}. By Remark \ref{fairtowg2-rem3}, $\za_{\eta}\zb_{\eta}=\Id$. It remains to show that the hypotheses of Proposition \ref{fairtowg2-pro1} b) are satisfied.

Hypothesis b) i) holds by Proposition \ref{fatdels-pro00}. We next show that hypothesis b) ii) holds. Let $f_1:\mu_1\rw\zg_1$, $f_2:\mu_2\rw \zg_2$ be the following maps in $\zuD$ with $\pi f_1=\pi f_2=f$
\begin{equation*}
\xymatrix{
[m_1]\ar@{^{(}->}^{s_1}[r] \ar_{\mu_1}[d] & [n_1]\ar^{\zg_1}[d]\\
[m]\ar_{f}[r] & [n]
}
\qquad \qquad
\xymatrix{
[m_2]\ar@{^{(}->}^{s_2}[r] \ar_{\mu_2}[d] & [n_2]\ar^{\zg_2}[d]\\
[m]\ar_{f}[r] & [n]
}
\end{equation*}
We need to show that
\begin{equation}\label{fairtowg2-eq8}
\za_{\mu_1}X(f_1^{op})\zb_{\zg_1}=\za_{\mu_2}X(f_2^{op})\zb_{\zg_2}\;.
\end{equation}
Let $f=\zve\eta$ be the epi-mono factorization of $f$ in $\zD$ and let $f_1=\zve_1\eta_1$, $f_2=\zve_2\eta_2$, $\eta_{min}$, $\zve_{min}$ be as in Proposition \ref{fatdels-pro1}. Then by Proposition \ref{fatdels-pro1} there are commuting diagrams
\begin{equation*}
\xymatrix@R=30pt{
& X_n \ar^{X(\zve^{op})}[rrr]  &&& X_r  &\\
& X_{\eta'} \ar^{X(\zve_{min}^{op})}[rrr] \ar@<0.0ex>[u]^{\za_{\eta'}}  &&& X_{\eta} \ar@<0.0ex>[u]^{\za_{\eta}}    &\\
X_{\zg_1} \ar@/_2pc/[rrr]_{X(\zve_1^{op})}\ar@<-0.6ex>[ur]^{X(z_1^{op})} &&  X_{\zg_2} \ar@/_2pc/[rrr]_{X(\zve_2^{op})}\ar@<-0.6ex>[ul]_{X(z_2^{op})} &  X_{\zb_1} \ar@<0.6ex>[ur]^{X(w_1^{op})} &&  X_{\zb_2} \ar@<-0.6ex>[ul]_{X(w_2^{op})}
}
\end{equation*}
\mk

\begin{equation*}
\xymatrix@R=40pt{
X_m && X_{\eta}\ar_{X(\eta_{min}^{op})}[ll] \ar^{X(\eta_{min}^{op})}[rr] && X_m \\
X_{\mu_1}\ar^{\za_{\mu_1}}[u] & X_{\zb_1}\ar^{X(\eta_1^{op})}[l]\ar^{X(w_1^{op})}[ur] && X_{\zb_2}\ar_{X(w_2^{op})}[ul]\ar_{X_{(\eta_2^{op})}}[r] & X_{\mu_2}\ar_{\za_{\mu_2}}[u]
}
\end{equation*}
Denote by $\zb_{\eta'}=\zb_{\nu_{\eta'}^{op}}$ and $\zb_{\eta}=\zb_{\nu_{\eta}^{op}}$ be pseudo-inverses of $\za_{\eta'}$ and $\za_{\eta}$ as in Lemma \ref{fairtowg2-lem01}. Since $z_1:\eta'\rw\zg_1$ and $z_2:\eta'\rw\zg_2$ are coloured arrows (see Proposition \ref{fatdels-pro1}), $X(z_1^{op})$ and $X(z_2^{op})$ are equivalences of categories. We denote by $\zb_{z_1^{op}}$ and $\zb_{z_2^{op}}$ their pseudo-inverses as in Lemma \ref{fairtowg2-lem01}. By Remark \ref{fairtowg2-rem4}, $\zb_{\zg_1}= \zb_{\nu_{\zg_1}^{op}}=\zb_{z_1^{op}}\zb_{\eta'}$. Thus, since $X(f_1^{op})=X(\eta_1^{op})X(\zve_1^{op})$ we calculate
\begin{equation}\label{fairtowg2-eq9}
\begin{gathered}
\begin{split}
    & \za_{\mu_1}X(f_1^{op})\zb_{\zg_1}=\za_{\mu_1}X(\eta_1^{op})X(\zve_1^{op})\zb_{z_1^{op}} \zb_{\eta'}= X(\eta_{min}^{op}) X(w_1^{op})X(\zve_1^{op})\zb_{z_1^{op}} \zb_{\eta'}=\\
    & =X(\eta_{min}^{op})X(\zve_{min}^{op}) X(z_1^{op})\zb_{z_1^{op}} \zb_{\eta'}\;.
\end{split}
\end{gathered}
\end{equation}
Similarly, since $\zb_{\zg_2}=\zb_{z_2^{op}}\zb_{\eta'}$ and $X(f_2^{op})=X(\eta_2^{op})X(\zve_2^{op})$ we calculate
\begin{equation}\label{fairtowg2-eq10}
\begin{gathered}
\begin{split}
    & \za_{\mu_2}X(f_2^{op})\zb_{\zg_2}=\za_{\mu_2}X(\eta_2^{op})X(\zve_2^{op})\zb_{z_2^{op}} \zb_{\eta'}= X(\eta_{min}^{op}) X(w_2^{op})X(\zve_2^{op})\zb_{z_2^{op}} \zb_{\eta'}=\\
    & =X(\eta_{min}^{op})X(\zve_{min}^{op}) X(z_2^{op})\zb_{z_2^{op}} \zb_{\eta'}\;.
\end{split}
\end{gathered}
\end{equation}
By Lemma \ref{fairtowg2-lem02}, $X({z_1^{op}})\zb_{z_1^{op}} \zb_{\eta'}=\zb_{\eta'}=X({z_2^{op}})\zb_{z_2^{op}} \zb_{\eta'}$. Therefore \eqref{fairtowg2-eq9} and \eqref{fairtowg2-eq10} imply \eqref{fairtowg2-eq8} as required.

In conclusion all the hypotheses of Proposition \ref{fairtowg2-pro1} are satisfied and thus there is a pseudo-functor $T_2 X\in\Ps\funcat{}{\Cat}$ with
\begin{equation*}
(T_2 X)_n=X_n=
\left\{
  \begin{array}{ll}
    X_0, & n=0 \\
    X_1, & n=1 \\
    \pro{X_1}{X_0}{n}, & n>1\;.
  \end{array}
\right.
\end{equation*}
By Proposition \ref{fairtowg2-pro1} a morphism $f:X\rw Y$ in $\fair{2}$ induces a pseudo-natural transformation $T_2 f:T_2 X\rw T_2 Y$. Finally, the existence of the levelwise equivalence pseudo-natural transformation $\pi^* T_2 X\rw X$ follows from Proposition \ref{fairtowg2-pro1} c).
\end{proof}
\begin{definition}\label{fairtowg2Def-5}
  Let $R_2:\fair{2}\rw\catwg{2}$ be the composite
\begin{equation*}
  \fair{2}\xrw{T_2} \segpsc{}{\Cat}\xrw{\St} \catwg{2}\;,
\end{equation*}
where $T_2$ is as in Theorem \ref{fairtowg2-the1} and $\St$ is as in Theorem \ref{book-st-cat}.
\end{definition}

\section{The comparison result}\label{comparison}
In this section we establish our main result, Theorem \ref{compar-the1}, stating that the functors $F_2:\catwg{2}\rw\fair{2}$ and $R_2:\fair{2}\rw\catwg{2}$ induce an equivalence of categories after localization with respect to the $2$-equivalences. The proof of this result uses the category $\fairwg{2}$ and the results in Section \ref{wgfair2}.
We first note two properties of the functor $T_2$ and $R_2$.
\begin{lemma}\label{fairtowg2-lem1}
Let $X\in\catwg{2}$ . There is a pseudo natural transformation in $\Ps\funcat{}{\Cat}$
\begin{equation*}
T_2 F_2 X\rw X
\end{equation*}
which is a levelwise equivalence of categories.
\end{lemma}
\begin{proof}
Let $D_2 X$ be as in Definition \ref{D2def}. We first show that there is a pseudo-natural transformation $T_2 F_2 X\rw D_2 X$ which is a levelwise equivalence of categories.

We apply Proposition \ref{fairtowg2-pro1} to the case where $\clC=\zuDop$, $\clD=\Dop$, $\pi:\zuDop\rw\Dop$ as in Section \ref{fatdelta_first} and $F=\wt\pi^* X\in\funcat{}{\Cat}$ where $X\in\catwg{2}$ and $\wt\pi^*:\catwg{2}\rw\funcat{}{\Cat}$ is as in Definition \ref{fairtowg2Def-2}.

For each object $\eta:[n']\rw[n]$ in $\zuDop$, the map $\za_{\eta}=(\wt\pi^* X)(\nu_{\eta}^{op}):(\wt\pi^* X)_{\eta}=X_n\rw X_n$ is the identity, thus also $\zb_{\eta}=\Id$. Condition b) ii) in Proposition \ref{fairtowg2-pro1} holds trivially while condition b) i) holds by Proposition \ref{fatdels-pro00}. The result of applying Proposition \ref{fairtowg2-pro1} b) in this case is a strict functor from $\Dop$ to $\Cat$, which is precisely $D_2 X$, as immediate to check.

By Proposition \ref{fairtowg2Cor-2}, there is a morphism in $\funcat{}{\Cat}$ $S_2 X:F_2 X\rw \wt\pi^* X$ which is a levelwise equivalence of categories.

By Proposition \ref{fairtowg2-pro1} b) and the above, we therefore obtain a pseudo-natural transformation $T_2 F_2 X\rw D_2  X$ which is also a levelwise equivalence of categories.

Composing the latter with the pseudo-natural transformation $D_2 X\rw X$ of Remark \ref{D2def-Rem} (which is also a levelwise equivalence of categories), the result follows.
\end{proof}
\begin{lemma}\label{fairtowg2-lem2}
Given $Y\in\fair{2}$, there is a  pseudo-natural transformation $F_2 R_2 Y\rw Y$ in $\Ps[\zuDop,\Cat]$ which is a levelwise equivalence of categories.
\end{lemma}
\begin{proof}
By Theorem \ref{fairtowg2-the1} there is a levelwise equivalence pseudo-natural transformation in $\Ps[\zuDop,\Cat]$
\begin{equation}\label{fairtowg2-eq11}
  \pi^* T_2 Y\rw Y
\end{equation}
By the properties of the strictification functor (see Section \ref{sbs-strict-psfun}), there is a pseudo-natural transformation in $\Ps\funcat{}{\Cat}$
\begin{equation*}
  R_2 Y=\St T_2 Y \rw T_2 Y
\end{equation*}
which is a levelwise equivalence of categories. This induces a pseudo-natural transformation in $\Ps\fatcat{\zuDop}{\Cat}$
\begin{equation}\label{fairtowg2Eq-13}
  \pi^*R_2 Y\rw \pi^* T_2 Y
\end{equation}
which is a levelwise equivalence of categories. By Remark \ref{fairtowg2Rem-2} there is a pseudo-natural transformation in $\Ps\fatcat{\zuDop}{\Cat}$
\begin{equation}\label{fairtowg2Eq-14}
  \tilde\pi^* R_2 Y \rw \pi^* R_2 Y
\end{equation}
which is a levelwise equivalence of categories. On the other hand, by Proposition \ref{fairtowg2Cor-2}, there is a natural transformation in $[\zuDop,\Cat]$
\begin{equation}\label{fairtowg2Eq-15}
  F_2 R_2 Y\rw \tilde\pi^* R_2 Y
\end{equation}
which is a levelwise equivalence of categories.

Composing \eqref{fairtowg2Eq-15},  \eqref{fairtowg2Eq-14},  \eqref{fairtowg2Eq-13}, \eqref{fairtowg2-eq11} we obtain a pseudo-natural transformation in $\Ps\fatcat{\zuDop}{\Cat}$
\begin{equation}\label{fairtowg2Eq-16}
  F_2 R_2 Y= F_2\St T_2 Y\rw Y
\end{equation}
which is a levelwise equivalence of categories.

\end{proof}
The proof of Theorem \ref{compar-the1} will use twice the following remark about the strictification functor.
\begin{remark}\label{compar-rem1}
Let $\clC$ be a small category. Recall (see Section \ref{sbs-strict-psfun}) the adjunction $\St\dashv J$
\begin{equation*}
  \St: \Ps[\clC,\Cat]\rightleftarrows [\clC,\Cat]: J
\end{equation*}
where $\St$ is the strictification functor and $J$ is the inclusion. Let $X\in[\clC,\Cat]$ and suppose there is a pseudo-natural transformation $t:Z\rw JX$ in $\Ps[\clC,\Cat]$ such that $t_c$ is an equivalence of categories for all $c\in \clC$. Then by the adjunction $\St\dashv J$ this corresponds to a natural transformation $w:\St Z\rw X$ in $[\clC,\Cat]$ making the following diagram commute
\begin{equation*}
\xymatrix@R=40pt{
Z \ar[rr]^{\eta} \ar[drr]_{t} && J\St Z \ar[d]^{Jw}\\
&& JX
}
\end{equation*}
Since for all $c\in\clC$, $\eta_c$ is an equivalences of categories (see \cite{Lack}) and, by assumption, so is $t_c$ then by the above diagram $w_c$ is also an equivalences of categories.
\end{remark}
\begin{theorem}\label{compar-the1}
The functors
\begin{equation*}
  F_2:\catwg{2}\rightleftarrows \fair{2}:R_2
\end{equation*}
induce an equivalence of categories after localization with respect to the $2$-equivalences
\begin{equation*}
\wt F_2:  \catwg{2}\bsim\;\simeq\;\fair{2}\bsim : \wt R_2\;.
\end{equation*}
\end{theorem}
\begin{proof}
We are going to show that, for each $X\in\catwg{2}$, there is a 2-equivalence in $\catwg{2}$
\begin{equation}\label{compar-eq1}
  R_2 F_2 X \rw X
\end{equation}
and that this is natural in $X$. We then will show that, for each $Y\in\fair{2}$, there is a zig-zag of 2-equivalences in $\fair{2}$
\begin{equation}\label{compar-eq2}
F_2 R_2 Y \lw D\, \St F_2 R_2 Y \rw Y
\end{equation}
and this is natural in $Y$. Then \eqref{compar-eq1} and \eqref{compar-eq2} imply the result. In fact, by \eqref{compar-eq1} we have an isomorphism $R_2 F_2 X\cong X$ in $\catwg{2}\bsim$ and by naturality with respect to $X$ we have the natural isomorphism $\wt R_2\wt F_2\cong\Id_{\Sz{\catwg{2}\bbsim}}$ where $\wt R_2$ and $\wt F_2$ are the functors induced by $R_2$ and $F_2$ on the localizations. Similarly \eqref{compar-eq2} means that $F_2 R_2 Y\cong Y$ in $\fair{2}\bsim$; by the naturality with respect to $Y$ this implies that there is a natural isomorphism $\wt F_2 \wt R_2\cong\Id_{\Sz{\fair{2}\bbsim}}$, so in conclusion $\wt F_2$ and $\wt R_2$ are equivalences of categories.

Let $X\in\catwg{2}$. By Lemma \ref{fairtowg2-lem1} there is a pseudo-natural transformation in $\Ps[\zD^{op},\Cat]$
\begin{equation}\label{compar-eq3}
T_2 F_2 X\rw X
\end{equation}
which is a levelwise equivalence of categories. Applying Remark \ref{compar-rem1} (with $\clC=\zD^{op}$) to \eqref{compar-eq3} we obtain a natural transformation in $\funcat{}{\Cat}$
\begin{equation}\label{compar-eq4}
R_2 F_2 X=\St T_2 F_2 X\rw X
\end{equation}
which is a levelwise equivalence of categories, and therefore also a 2-equivalence (see Remark \ref{wg-doubcat-rem-2}).

Given a morphism $f:X\rw X'$ in $\catwg{2}$, since \eqref{compar-eq4} is a natural transformation, this induces a commuting diagram
\begin{equation}\label{compar-eq5}
\begin{gathered}
\xymatrix{
R_2 F X \ar[r] \ar_{R_2 F f}[d] & X \ar^{f}[d]\\
R_2 F X' \ar[r] & X'
}
\end{gathered}
\end{equation}
Let $Y\in\fair{2}$. By Lemma \ref{fairtowg2-lem2} there is a levelwise equivalence pseudo-natural transformation $F_2 R_2 Y\rw Y$ in $\Ps[\zuDop,\Cat]$. Applying to it Remark \ref{compar-rem1} (with $\clC=\zuDop$) we obtain a natural transformation in $\fatcat{\zuDop}{\Cat}$

 \begin{equation}\label{compar-eq6}
  \St F_2 R_2 Y\rw Y
\end{equation}
which is a levelwise equivalence of categories.

 By Remark \ref{segpseu-rem1}, since $F_2 \St T_2 Y=F_2 R_2 Y \in\fair{2}$, then $F_2 R_2 Y\in\segps\fatcat{\zuDop}{\Cat}$; thus, by Theorem \ref{fairtowg2The-1}, $\St F_2 R_2 Y\in\fairwg{2}$. By Remark \ref{strict-psfun-Rem-1} there is a natural transformation in $\fatcat{\zuDop}{\Cat}$
\begin{equation}\label{compar-eq7}
  \St F_2 R_2 Y\rw F_2 R_2 Y
\end{equation}
which is a levelwise equivalence of categories.

 Applying the functor $D$ of Lemma \ref{fairtowg2Lem-6} to \eqref{compar-eq6} and \eqref{compar-eq7} we obtain a zig-zag in $\fair{2}$
\begin{equation*}
  D F_2 R_2 Y=F_2 R_2 Y \lw D \St F_2 R_2 Y\rw D Y=Y\;.
\end{equation*}
Since $D$ preserves levelwise equivalences of categories, this is a zig-zag of levelwise equivalences and therefore (see Remark \ref{wg-doubcat-rem-2}) of 2-equivalences in $\fair{2}$.

If $f: Y\rw Y'$ is a morphism in $\fair{2}$, by the naturality of \eqref{compar-eq6} and \eqref{compar-eq7} we obtain commuting diagrams in $\fatcat{\zuDop}{\Cat}$
\begin{equation*}
\xymatrix@C=40pt{
F_2R_2 Y\ar_{F_2R_2 f}[d] & \St F_2 R_2 Y \ar[l] \ar^{\St F_2 R_2 f}[d] \ar[r] & Y \ar^{f}[d]\\
F_2 R_2 Y' & \St F_2 R_2 Y' \ar[l] \ar[r] & Y'
}
\end{equation*}
By functoriality of $D$, this give rise to  commuting diagrams in $\fair{2}$
\begin{equation}\label{compar-eq8}
\begin{gathered}
\xymatrix@C=45pt{
F_2 R_2 Y = D F_2 R_2 Y \ar_{F_2R_2 f}[d] & D \St F_2 R_2 Y \ar[l] \ar^{D\St F_2 R_2 f}[d] \ar[r] & D Y=Y \ar^{f}[d]\\
F_2 R_2 Y'=D F_2 R_2 Y' & D\St F_2 R_2 Y' \ar[l] \ar[r] & DY'=Y'
}
\end{gathered}
\end{equation}
In conclusion, both \eqref{compar-eq1} and \eqref{compar-eq2} hold, and are natural in $X$ and $Y$ respectively, as required.
\end{proof}
\begin{corollary}\label{compar-cor1}
There is an equivalence of categories
\begin{equation*}
   \fair{2}\bsim\;\simeq\;\ta{2}\bsim\;.
\end{equation*}
\end{corollary}
\begin{proof}
By Theorem \ref{compar-the1} there is an equivalence of categories $ \fair{2}\bsim\;\simeq\;\catwg{2}\bsim\;$ while by \cite[Theorem 12.2.6]{PBook2019} there is an equivalence of categories $\catwg{2}\bsim\;\simeq\;\ta{2}\bsim\;.$ Hence the result.
\end{proof}
We finally observe that the equivalence up to homotopy between $\catwg{2}$ and $\fair{2}$ specializes to an equivalence between the groupoidal versions of these models, defined as follows. We denote by $\Gpd$ the category of groupoids.
\begin{definition}\cite{PBook2019}\label{compar-def1}
The category $\gcatwg{2}$ of groupoidal weakly globular double categories is the full subcategory of $\catwg{2}$ whose objects $X$ are such that $X_k\in\Gpd$ for all $k\in\Dop$ and $\p{1}X\in\Gpd$.
\end{definition}
\begin{definition}\label{compar-def2}
The category $\gfair{2}$ of groupoidal weakly globular fair 2-categories is the full subcategory of $\fair{2}$ whose objects $X$ are such that $X_{\eta}\in\Gpd$ for all $\eta\in\zuDop$ and $\p{1}X\in\Gpd$.
\end{definition}
\begin{corollary}\label{compar-cor2}
The functors $F_2,\,R_2$ of Theorem \ref{compar-the1} restrict to functors
\begin{equation*}
  F_2:\gcatwg{2}\leftrightarrows \gfair{2}:R_2
\end{equation*}
inducing an equivalence of categories after localization with respect to the 2-equivalences:
\begin{equation*}
  \gcatwg{2}\bsim\;\simeq\,\fair{2}\bsim\;.
\end{equation*}
\end{corollary}
\begin{proof}
Let $X\in\gcatwg{2}$. Then $X_1\in\Gpd$, $X_0\in\Gpd$. So if $\eta\in\zuDop$, from the expression \eqref{fairtowg2eq-01} of $(F_2 X)_{\eta}$ we see that $(F_2 X)_{\eta}\in\Gpd$. Also $\p{1}X\in\Gpd$ hence (using Theorem \ref{StrongpseThe-1}), $\p{1}F_2 X\cong\p{1}X \in\Gpd$. We conclude that $F_2 X\in\gfair{2}$.

Let $Y\in\gfair{2}$. By Remark \ref{compar-rem1}, there is a pseudo natural transformation $R_2 Y=\St T_2 Y\rw T_2 Y$ which is a levelwise equivalence of categories. By the expression of $T_2 Y$ (Theorem \ref{fairtowg2-the1}), $(T_2 Y)_k\in\Gpd$ for all $k\in\Dop$; since a category equivalent to a groupoid is itself a groupoid, $(R_2 Y)_k\in\Gpd$ for all $k\in\Dop$. Also, $\p{1}R_2 Y\cong\p{1}T_2 Y\cong\p{1}Y\in\Gpd$. We conclude that $R_2 Y\in\gcatwg{2}$. Thus we have functors
\begin{equation*}
F_2:\gcatwg{2}\leftrightarrows \gfair{2}:R_2\;.
\end{equation*}
In the proof of Theorem \ref{compar-the1} we showed that, that for each $X\in\catwg{2}$, $Y\in\fair{2}$, there are natural zig-zags of levelwise equivalences of categories (and thus 2-equivalences)
\begin{equation*}
R_2 F_2 X\rw X\qquad F_2 R_2 Y\lw D\St F_2 R_2 Y\rw Y\;.
\end{equation*}
If $X\in\gcatwg{2}$, $Y\in\gfair{2}$ these are zig-zags of levelwise equivalences of categories (and thus 2-equivalences) in $\gcatwg{2}$ and $\gfair{2}$ respectively. Therefore there is an induced equivalence of categories after localization:
\begin{equation*}
  \gcatwg{2}\bsim\;\simeq\,\fair{2}\bsim\;.
\end{equation*}
\end{proof}




\bibliography{C:/Users/simon/OneDrive/Documents/LATEX_ARCHIVES_BIBLIO/BIBLIOGRAPHY/BIB_LIB_BOOK_B}{}
\end{document}